\DeclareSymbolFontAlphabet{\mathbbm}{bbold}
\DeclareSymbolFontAlphabet{\mathbb}{AMSb}%
\numberwithin{equation}{section}
\theoremstyle{plain}
\newtheorem{theorem}{Theorem}[section]
\newtheorem{prop}[theorem]{Proposition}
\newtheorem{cor}[theorem]{Corollary}
\newtheorem{lemma}[theorem]{Lemma}
\theoremstyle{definition}
\newtheorem{defn}[theorem]{Definition}
\newtheorem{example}[theorem]{Example}
\theoremstyle{remark}
\newtheorem{remark}[theorem]{Remark}
\newcommand{\FFlie}{\mathcal{F}_{\mathrm{Lie}}}
\newcommand{\Lie}{{\mathrm {Lie}}}
\newcommand{\Minor}{\operatorname{Minor}}
\newcommand{\Rmin}{{\R_{\min}}}
\newcommand{\Rmininf}{{\R_{{\min},\infty}}}
\newcommand{\Imin}{\mathcal {IM}}
\newcommand{\Toeplitz}{\operatorname{Toep}}
\newcommand{\ToeplitzU}{\operatorname{UToep}}
\newcommand{\RR}{\mathcal R}
\newcommand{\ep}{\varepsilon}
\newcommand{\bigA}{\mathbf A}
\newcommand{\bigB}{\mathbf B}
\newcommand{\inv}{^{-1}}
\newcommand{\R}{\mathcal{R}}
\newcommand{\C}{\mathbb{C}}
\newcommand{\Z}{\mathbb{Z}}
\newcommand{\Odelpos}{{\mathcal O_{>0}}}
                                                                                                                                                                                                                                                                                                                                                                                                                                                                                                                                                                                                                                                                                                                                                                                                                                                                                                                                                                                                                                                                                                                                                                                                                                                                                                                                                                                                                                                                                                                                                                                                                                                                                                                                                                                                                                                                                                                                                                                                                                                                                                                                                                                                                                                                                                                                                                                                                                                                                                                                                                                                                                                                                                                                                                                                                                                                                                                                                                                                                                                                                                                                                                                                                                                                                                                                                                                                                                                                                                                                                                                                                                                                                                                                                                                                                                                                                                                                                                                                                                                                                                                                                                                                                                                         \newcommand{\Kdel} {{\mathcal C}}                                                                                                                                                                                                                                                                                                                                                                                                                                                                                                                                                                                                                                                                                                                                                                                                                                                                                                                                                                                                                                                                                                                                                                                                                                                                                                                                                                                                                                                                                                                                                                                                                                                                                                                                                                                                                                                                                                                                                                                                                                                                                                                                                                                                                                                                                                                                                                                                                                                                                                                                                                                                                                                                                                                                                                                                                                                                                                                                                                                                                                                                                                                                                                                                                                                                                                                                                                                                                                                                                                                                                                                                                                                                                                                                                                                                                                                                                                                                                                                                                                                                                                                                                                                                                                      \newcommand{\Kdelnn} {{\mathcal C_{\ge 0}}}
                                                                                                                                                                                                                                                                                                                                                                                                                                                                                                                                                                                                                                                                                                                                                                                                                                                                                                                                                                                                                                                                                                                                                                                                                                                                                                                                                                                                                                                                                                                                                                                                                                                                                                                                                                                                                                                                                                                                                                                                                                                                                                                                                                                                                                                                                                                                                                                                                                                                                                                                                                                                                                                                                                                                                                                                                                                                                                                                                                                                                                                                                                                                                                                                                                                                                                                                                                                                                                                                                                                                                                                                                                                                                                                                                                                                                                                                                                                                                                                                                                                                                                                                                                            \newcommand{\Kdelpos}{{\mathcal C_{>0}}}
 \newcommand{\Kdelposst}{{\mathcal C_{>0}^{\rm{st}}}}
                                                                                                                                                                                                                                                                                                                                                                                                                                                                                                                                                                                                                                                                                                                                                                                                                                                                                                                                                                                                                                                                                                                                                                                                                                                                                                                                                                                                                                                                                                                                                                                                                                                                                                                                                                                                                                                                                                                                                                                                                                                                                                                                                                                                                                                                                                                                                                                                                                                                                                                                                                                                                                                                                                                                                                                                                                                                                                                                                                                                                                                                                                                                                                                                                                                                                                                                                                                                                                                                                                                                                                                                                                                                                                                                                                                                                                                                                                                                                                                                                                                                                                                                                                                                                                                            \newcommand{\Kdelposwk}{{\mathcal C_{>0}^{\rm{wk}}}}
                                                                                                                                                                                                                                                                                                                                                                                                                                                                                                                                                                                                                                                                                                                                                                                                                                                                                                                                                                                                                                                                                                                                                                                                                                                                                                                                                                                                                                                                                                                                                                                                                                                                                                                                                                                                                                                                                                                                                                                                                                                                                                                                                                                                                                                                                                                                                                                                                                                                                                                                                                                                                                                                                                                                                                                                                                                                                                                                                                                                                                                                                                                                                                                                                                                                                                                                                                                                                                                                                                                                                                                                                                                                                                                                                                                                                                                                                                                                                                                                                                                                                                                                                                                                                                                                                                                                                                                                                                                                                                                                                                                                                                                                                                                                                                                                                                                                                                                                                                                                                                                                                                                                                                                                                                                                                                                                                                                                                                                                                                                                                                                                                                                                                                                                                                                                                                                                                                                                                                                                                                                                                                                                                                                                                                                                                                                                                                                                                                                                                                                                                                                                                                                                                                                                                                                                                                                                                                                                                                                                                                                                                                                                                                                                                                                                                                                                                                                                                                                                                                                                                                                                                                                                                                                                                                                                                                                                                                                                                                                                                                                                                                                                                                                                                                                                                                                                                                                                                                                                                                                                                                                                                                                                                                                                                                                                                                                                                                                                                                                                                                                                                                                                                                                                                                                                                                                                                                                                                                                                                                                                                                                                                                                                                                                                                                                                                                                                                                                                                                                                                                                                                                                                                                                                                                                                                                                                                                                                                                                                                                                                                                                                                                                                                                                                                                                                                                                                                                                                                                                                                                                                                                                                                                                                                                                                                                                                                                                                                                                                                                                                                                                                                                                                                                                                                                                                                                                                                                                                                                                                                                                                                                                                                                                                                                                                                                                                                                                                                                                                                                                                                                                                                                                                                                                                                                                                                                                                                                                                                                                                                                                                                                                                                                                                                                                                                                                                                                                                                                                                                                                                                                                                                                                                                                                                                                                                                                                                                                                                                                                                                                                                                                                                                                                                                                                                                                                                                                                                                                                                                                                                                                                                                                                                                                                                                                                                                                                                                                                                                                                                                                                                                                                                                                                                                                                                                                                                                                                                                                                                                                                                                                                                                                                                                                                                                                                                                                                                                                                                                                                                                                                                                                                                                                                                                                                                                                                                                                                                                                                                                                                                                                                                                                                                                                                                                                                                                                                                                                                                                                                                                                                               \newcommand{\boldalpha}{\boldsymbol{\alpha}}                                                                                                                                                                                                                                                                                                                                                                                                                                                                                                                                                                                                                                                                                                                                                                                                                                                                                                                                                                                                                                                                                                                                                                                                                                                                                                                                                                                                                                                                                                                                                                                                                                                                                                                                                                                                                                                                                                                                                                                                                                                                                                                                                                                                                                                                                                                                                                                                                                                                                                                                                                                                                                                                                                                                                                                                                                                                                                                                                                                                                                                                                                                                                                                                                                                                                                                                                                                                                                                                                                                                                                                                                                                                                                                                                                                                                                                                                                                                                                                                                                                                                                                                                                                                                         
\newcommand{\boldbeta}{\boldsymbol{\beta}}
                                                                                                                                                                                                                                                                                                                                                                                                                                                                                                                                                                                                                                                                                                                                                                                                                                                                                                                                                                                                                                                                                                                                                                                                                                                                                                                                                                                                                                                                                                                                                                                                                                              \newcommand{\N}{\mathbb N} 
                                                                                                                                                                                                                                                                                                                                                                                                                                                                                                                                                                                                                                                                                                                                                                                                                                                                                                                                                                                                                                                                                                                                                                                                                                                                                                                                                                                                                                                                                                                                                                                                                                             \renewcommand{\R}{\mathbb R}
\newcommand{\KK}{\mathbf K}
\newcommand{\Val}{\operatorname{Val}}
\newcommand{\Trop}{\operatorname{Trop}}
\renewcommand{\j}{{\mathrm j}}
\tikzset{labl/.style={anchor=south, rotate=90, inner sep=.5mm}}
\title{Totally positive Toeplitz matrices: classical and modern}
\author{Konstanze Rietsch}
\thanks{This work was partially supported by EPSRC grant EP/V002546/1}
\begin{document}

\begin{abstract} 
By a theorem of Edrei, 
an infinite, normalised totally nonnegative upper-triangular Toeplitz matrix is determined by a pair of nonnegative  parameter sequences $(\boldalpha,\boldbeta)$, the `Schoenberg parameters', where nonzero parameters correspond to the roots and poles of a naturally associated generating function. These totally nonnegative Toeplitz matrices and their parameters also arise in the classification of characters of the infinite symmetric group $S_\infty$ by later work of Thoma \cite{Thoma}. Moreover the Schoenberg parameters have an asymptotic interpretation in terms of irreducible representations of $S_n$ and their Young diagrams by Vershik-Kerov \cite{VershikKerovAsymptotic}. In this article we consider infinite totally positive Toeplitz matrices as limits of finite ones, and we obtain two further asymptotic descriptions of the Schoenberg parameters that are now related to quantum cohomology of the flag variety $\mathcal Fl_{n+1}$ as $n\to\infty$. One is related to asymptotics of normalised quantum parameters, and the other to asymptotics of the Chern classes of the tautological line bundles. We also describe the asymptotics of (quantum) Schubert classes in terms of $(\boldalpha,\boldbeta)$. 
Our limit formulas relate to and were motivated by a tropical analogue of this theory \cite{rietschToeplitz} that we survey. In the tropical setting one finds an asymptotic relationship between the `tropical Schoenberg parameters' and the weight map from Lusztig's parametrisation of the canonical basis.
 \end{abstract}

\maketitle

\setcounter{tocdepth}{2}
%\tableofcontents

 \section{Introduction}%: Three 20th century theorems on infinite Toeplitz matrices}
 \label{s:20thCentury}

The theory of total positivity, concerning itself with matrices with nonnegative  minors, was pioneered around 100 years ago by work of P\'olya, Fekete, Schoenberg and Gantmacher-Krein, see \cite{Karlin,GK:oscillation}. It  has two eras roughly, the early era before the mid 1990's and  the later one, after Lusztig's work \cite{Lusztig94} on total positivity and canonical bases reinvigorated the field which has now grown very vast. For our introduction, we start in the earlier period and recall three theorems which are the theorems of Edrei (1952), Thoma (1964) and Vershik-Kerov (1982). Then we will introduce more modern, but related topics as they arise. 

\subsection{The Edrei Theorem}\label{s:EdreiThm} 
One of the highlights of the classical theory of total positivity is the proof, completed by Edrei,  of Schoenberg's  conjecture on the structure of totally nonnegative infinite upper-triangular Toeplitz matrices. This conjecture, made by Schoenberg in 1948 in honour of  Courant's $60$\textsuperscript{th} birthday \cite{Schoenberg:48}, can be thought of as parametrising such matrices via factorisations of their associated generating functions. Let us denote by $u(\mathbf c)$ the infinite Toeplitz matrix corresponding to the sequence $\mathbf c=(1,c_1,c_2, c_3,\dotsc)$, 
\begin{equation}\label{e:u}
u(\mathbf c)=\begin{pmatrix}
1 &c_1 & c_2& c_3& c_4 &  & \quad\\
 & 1  & c_1& c_2 & c_3& \ddots  &\\
  &    & 1  & c_1 & c_2 &\ddots &\\
  &    &    &1 &c_1 &\ddots &\\
  &    &    &       &  \ddots  &\ddots &  \\
  &&&&&& 
\end{pmatrix},
\end{equation}
and call $u(\mathbf c)$ a \textit{totally nonnegative matrix} and the sequence $\mathbf c$ a \textit{totally nonnegative sequence} if all of the minors of $u(\mathbf c)$ lie in $\R_{\ge 0}$. Such a sequence is also often referred to as a P\'olya frequency sequence. 
\begin{theorem} [Edrei Theorem {\cite{ASW,Edrei52}}]\label{t:EdreiIntro}
A sequence $\mathbf c=(c_i)_{i=1}^\infty$ of real numbers is totally nonnegative if and only if its generating function has a factorisation of the form
\begin{equation}\label{e:SchoenbergFormula}
1+c_1x+c_2x^2+c_3x^3+\dotsc =e^{\gamma x}\prod_{i=1}^\infty\frac{1+\beta_ix}{1-\alpha_ix},
\end{equation}
for some $((\boldalpha,\boldbeta),\gamma)\in\Omega_S\times\R_{\ge 0}$ where $\Omega_S$ denotes the parameter space
\[
\Omega_{S}=\left\{(\boldalpha,\boldbeta)\in\R_{\ge 0}^{\N}\times \R_{\ge 0}^\N\left|\begin{array}{l} \boldalpha=(\alpha_i)_{i\in\N} \text{ with } \alpha_{i}\ge \alpha_{i+1} \text{ and }\sum_{i}\alpha_i<\infty\\ \boldbeta=(\beta_j)_{j\in\N} \text{ with }  \beta_{j}\ge \beta_{j+1} \text{ and }\sum_{j}\beta_j<\infty\end{array}\right.\right\}. 
\]
\end{theorem}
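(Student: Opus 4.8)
The plan is to treat the two implications separately; the direction ``factorisation $\Rightarrow$ total nonnegativity'' is elementary, and I dispose of it first.

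\emph{Sufficiency.} Suppose $((\boldalpha,\boldbeta),\gamma)\in\Omega_S\times\R_{\ge0}$. If two power series $f,g$ with $f(0)=g(0)=1$ have Toeplitz matrices $u(\mathbf a),u(\mathbf b)$ as in \eqref{e:u}, then $u(fg)=u(\mathbf a)\,u(\mathbf b)$; because these matrices are upper triangular, for any fixed finite index sets $I,J$ only finitely many terms of the Cauchy--Binet expansion of $\det\bigl(u(\mathbf a)\,u(\mathbf b)\bigr)[I,J]$ are nonzero, so a product of totally nonnegative Toeplitz matrices is again totally nonnegative. It therefore suffices to treat the three factor types. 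For $1+\beta x$ with $\beta\ge0$ the Toeplitz matrix is bidiagonal (diagonal entries $1$, first superdiagonal $\beta$), and only the identity permutation can contribute to a minor, so every minor is $\ge0$. For $\tfrac{1}{1-\alpha x}$ with $\alpha>0$, conjugating $u$ by the positive diagonal matrix $\operatorname{diag}(\alpha^i)_i$ turns it into the all-ones upper-triangular Toeplitz matrix, whose minors are readily seen to be $0$ or $1$; since conjugation by a positive diagonal matrix scales each minor by a positive constant, $u$ is totally nonnegative. For $e^{\gamma x}$ with $\gamma\ge0$, the polynomial $(1+\tfrac{\gamma}{n}x)^n$ is a finite product of copies of $1+\tfrac{\gamma}{n}x$, hence totally nonnegative, and its coefficients $\binom{n}{k}(\gamma/n)^k$ converge to $\gamma^k/k!$; since every minor is a polynomial in finitely many coefficients, total nonnegativity is preserved under coefficientwise limits. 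Finally each partial product $e^{\gamma x}\prod_{i\le N}\frac{1+\beta_ix}{1-\alpha_ix}$ is totally nonnegative, and $\sum\alpha_i,\sum\beta_i<\infty$ makes the coefficients converge as $N\to\infty$, so the product in \eqref{e:SchoenbergFormula} is totally nonnegative.

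\emph{Necessity: reductions.} Conversely, let $\mathbf c=(c_i)$ be totally nonnegative and set $f(x)=1+c_1x+c_2x^2+\dotsb$. The $2\times2$ minors give $c_k^2\ge c_{k-1}c_{k+1}$, and a short analysis of the minors shows that $\{k:c_k\ne0\}$ is either $\{0,1,\dots,m\}$ or all of $\{0,1,2,\dotsc\}$. In the first case $f$ is a genuine polynomial of degree $m$, and since $u(\mathbf c)$ is totally nonnegative the Aissen--Schoenberg--Whitney theorem \cite{ASW} forces every root of $f$ to be real and negative, so $f=\prod_{j=1}^{m}(1+\beta_jx)$ with $\beta_j>0$ --- the asserted form with $\gamma=0$ and $\boldalpha=0$. (I would recall that \cite{ASW} itself rests on the variation-diminishing property of the convolution operator attached to a totally nonnegative triangular Toeplitz matrix, which pins the roots of the generating polynomial to $\R_{\le0}$.) So assume $\mathbf c$ is not eventually zero; then all $c_k>0$, the ratios $c_{k+1}/c_k$ are nonincreasing, and $f$ has a positive (possibly infinite) radius of convergence.

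\emph{Necessity: the analytic core --- the main obstacle.} The essential step, carrying the whole weight of the theorem and constituting Edrei's completion of Schoenberg's conjecture, is to show that $f$ extends to a \emph{meromorphic} function on $\C$ of order $\le1$ (and of genus $0$ for its zero- and pole-divisors) whose poles all lie in $\R_{>0}$ and whose zeros all lie in $\R_{<0}$. The plan is to exploit the distinguished subfamily of Toeplitz minors $D^{(m)}_p=\det u(\mathbf c)[\{1,\dots,p\},\{m+1,\dots,m+p\}]=\det\bigl(c_{m-i+j}\bigr)_{1\le i,j\le p}\ge0$: their positivity should, on the one hand, yield a Nevanlinna-type growth estimate forcing $f$ to be meromorphic of order $\le1$, and on the other hand, via an argument in the spirit of \cite{ASW} applied to the totally nonnegative polynomials arising as numerators and denominators of the rational functions that approximate $f$, confine the zeros of $f$ to $\R_{<0}$ and its poles to $\R_{>0}$; the pole statement is also accessible by a dual argument for $1/f$, using that the inverse of a totally nonnegative matrix has an alternating sign pattern and hence produces, after alternating signs, a second totally nonnegative sequence. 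I expect essentially all of the genuine difficulty of the theorem to be concentrated here.

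\emph{Necessity: conclusion.} Granting the analytic core, Hadamard's factorisation theorem applied to $f$ --- order $\le1$, zeros $-1/\beta_i\in\R_{<0}$, poles $1/\alpha_i\in\R_{>0}$ --- gives $f(x)=e^{\gamma_0+\gamma x}\prod_i\frac{1+\beta_ix}{1-\alpha_ix}$ with $\sum_i\alpha_i,\sum_i\beta_i<\infty$, and $f(0)=1$ forces $\gamma_0=0$. Comparing coefficients of $x$ (equivalently, the logarithmic derivative at $0$) gives $c_1=\gamma+\sum_i\beta_i+\sum_i\alpha_i$, so finiteness of $c_1$ is consistent with summability, and relabelling $\boldalpha,\boldbeta$ in nonincreasing order puts $(\boldalpha,\boldbeta)\in\Omega_S$. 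The remaining sign constraints $\alpha_i,\beta_i\ge0$ are exactly the statement that the zeros and poles sit where the analytic core placed them, and $\gamma\ge0$ follows from a direct check that a negative $\gamma$ would force some coefficient $c_k$, or some minor of $u(\mathbf c)$, to be negative. This exhibits $((\boldalpha,\boldbeta),\gamma)\in\Omega_S\times\R_{\ge0}$, and together with the sufficiency direction it establishes the equivalence.
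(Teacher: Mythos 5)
The paper does not prove this theorem; it is a classical result cited from \cite{ASW,Edrei52}, and \cref{r:gamma} merely attributes the ingredients: Schoenberg for sufficiency, Aissen--Schoenberg--Whitney for the zero/pole structure, and Edrei for the identification of the residual entire factor as $e^{\gamma x}$. So there is no internal proof to compare against, and your proposal must stand on its own.

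Your sufficiency direction is sound in outline: the Cauchy--Binet reduction to the three elementary factor types, the diagonal conjugation trick for $1/(1-\alpha x)$, the approximation $e^{\gamma x}=\lim_n(1+\gamma x/n)^n$, and closure of total nonnegativity under coefficientwise limits are all standard and correct. This matches the Schoenberg half that the paper's remark attributes to \cite{Schoenberg:48}.

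The necessity direction has a genuine gap, and you have located it yourself. The paragraph labelled ``the analytic core'' is exactly Edrei's contribution, and you leave it as a wish list: establish that $f$ is meromorphic of order $\le 1$ with all zeros in $\R_{<0}$ and all poles in $\R_{>0}$, and that the canonical product has genus $0$ so Hadamard gives at most a factor $e^{\gamma_0+\gamma x}$. You gesture at ``a Nevanlinna-type growth estimate'' and ``an argument in the spirit of \cite{ASW}'' but do not give either. The duality remark ($u(\mathbf c)^{-1}$ has alternating signs, so $1/f(-x)$ is again TNN) is a correct observation that reduces poles of $f$ to zeros of another TNN function, but it does not yield the growth and reality estimates. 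Nothing in the minor positivity inequalities $D^{(m)}_p\ge 0$ that you write down is shown to imply an order-$\le 1$ bound, and this implication is precisely what \cite{Edrei52} is about and what the paper's remark flags as ``the key and most mysterious part.'' The closing claim that $\gamma\ge 0$ ``follows from a direct check'' is also left unsubstantiated; it is true, but it requires an argument (e.g.\ comparing $c_1=\gamma+\sum\alpha_i+\sum\beta_i$ with the positivity of a suitable family of minors, or the observation that if $\gamma<0$ one can factor out a larger product and reduce to a TNN sequence whose generating function $e^{\gamma x}$ has a negative coefficient), not a wave of the hand. As written, the necessity direction is an accurate description of what must be proved rather than a proof.
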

\begin{remark}\label{r:gamma} We note that
% it is immediate 
for any totally nonnegative Toeplitz matrix $u(\mathbf c)$ as in Theorem~\ref{t:EdreiIntro} the sum $\sum_i(\alpha_i+\beta_i)$ equals to  $c_1$ if and only if $\gamma=0$, and is in any case bounded above by $c_1$. For the proof of this theorem, it was initially shown by Schoenberg that \eqref{e:SchoenbergFormula} defines a totally nonnegative sequence, and  together with M.~Aissen and A.~Whitney \cite{ASW} that conversely the roots and poles of a totally nonnegative sequence are as described. The key and most mysterious part of Theorem~\ref{t:EdreiIntro}, which was proved by Edrei in \cite{Edrei52}, is that the factor that remains, necessarily an entire function with no zeros, is of the simple form~$e^{\gamma x}$. 
 \end{remark}
\subsection{The Thoma Theorem}\label{s:ThomaThm}
Apart from the intrinsic beauty of Theorem~\ref{t:EdreiIntro}, infinite totally nonnegative Toeplitz matrices came up again independently a decade later and were shown to have deep connections to representation theory, further adding to their interest. Namely, Thoma \cite{Thoma} reproved Theorem~\ref{t:EdreiIntro}, unaware of Edrei's earlier work, while giving his parametrisation of extremal characters of the infinite symmetric group~$S_\infty$. We refer the reader to  \cite{BorodinOlshanskiBook} for a modern introduction to  this theory as well as the standard notations. In the statement of Thoma's theorem below let $\bar \chi_\lambda=\frac{\chi_\lambda}{\chi_\lambda(e)}$ denote the normalised character of $S_n$ associated to a partition $\lambda=(\lambda_1,\dotsc, \lambda_r)$ of $n$, and let $\Delta_\lambda(u)$ denote the Toeplitz minor $\det(c_{\lambda_i+\j-i})_{i,j=1}^r$ of $u(\mathbf c)$ from \eqref{e:u} associated to the same partition. 
The trivial character of $S_n$ is $\chi_{(n)}$ in this notation. Recall also that for any class function $\chi$ on $S_n$ the inner product $\langle \chi,\chi_{(n)}\rangle$ is just the average value of $\chi$.

\begin{theorem}[Thoma Theorem~\cite{Thoma}]
\label{t:ThomaIntro} The extremal characters of $S_\infty$ are precisely the multiplicative class functions $\chi:S_\infty\to \C$ for which  the associated sequence $\mathbf c=(1,c_2,c_3,\dotsc)$ defined by $c_n=\langle\chi|_{S_n},\chi_{(n)}\rangle$ is totally nonnegative. 
Moreover, the restriction of $\chi$ to $S_n$ is then the convex combination% of normalised irreducible characters,
\[
\chi|_{S_n}=\sum_{\lambda\vdash n}\Delta_\lambda(u(\mathbf c))\,\bar\chi_\lambda,
\]
of (normalised) irreducible characters of $S_n$, and $\chi$ is given by  the explicit formula
\[
\chi((k_1+1,\dotsc, k_2)(k_3+1,\dotsc ,k_4)\dotsc (k_{2m-1}+1,\dotsc, k_{2m}))=\prod_{\ell=1}^m \left(\sum_{i=1}^\infty\alpha_i^{k_{2\ell}-k_{2\ell-1}}-(-\beta_i)^{k_{2\ell}-k_{2\ell-1}}\right)
\]  
for any product of nontrivial disjoint cycles, using the parameters from Theorem~\ref{t:EdreiIntro}.
\end{theorem}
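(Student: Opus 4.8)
The plan is to derive the statement from Edrei's theorem (\cref{t:EdreiIntro}) together with two standard facts about $S_\infty$: the structural statement that the extremal normalised characters of $S_\infty$ (positive-definite central functions $\chi$ with $\chi(e)=1$) are precisely the \emph{multiplicative} ones among the normalised characters, and the Frobenius characteristic dictionary between central functions on the finite groups $S_n$ and symmetric functions. Granting Edrei's theorem, the argument then splits into an essentially elementary symmetric-function translation and this one structural input.

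First I would unpack multiplicativity. A multiplicative class function $\chi$ with $\chi(e)=1$ is determined by the scalars $p_k:=\chi(\sigma_k)$ for a $k$-cycle $\sigma_k$, $k\ge 2$, via $\chi(g)=\prod_{k\ge 2}p_k^{m_k(g)}$ where $(1^{m_1}2^{m_2}\cdots)$ is the cycle type of $g$; set $p_1:=1$. Then $c_n=\langle\chi|_{S_n},\chi_{(n)}\rangle=\frac1{n!}\sum_{g\in S_n}\chi(g)=\sum_{\mu\vdash n}z_\mu^{-1}\prod_k p_k^{m_k(\mu)}$, so $c_n$ is the image of the complete homogeneous symmetric function $h_n$ under the algebra map $\Lambda\to\C$ sending the power-sum $p_k$ to the scalar $p_k$; equivalently $\sum_{n\ge 0}c_n x^n=\exp(\sum_{k\ge 1}\frac{p_k}{k}x^k)$ by Newton's identities. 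By Jacobi--Trudi the Toeplitz minor $\Delta_\lambda(u(\mathbf c))=\det(c_{\lambda_i-i+j})$ is the image of the Schur function $s_\lambda$ under the same map, and since the power-sum expansion of $s_\lambda$ has the irreducible character values as coefficients, this image is exactly $\langle\chi|_{S_n},\chi_\lambda\rangle$; this is the asserted decomposition $\chi|_{S_n}=\sum_{\lambda\vdash n}\Delta_\lambda(u(\mathbf c))\,\bar\chi_\lambda$ (in the normalisation of the statement). Moreover $\chi|_{S_n}$ is a normalised character of $S_n$ exactly when all its Fourier coefficients $\Delta_\lambda(u(\mathbf c))$ are nonnegative, and since every minor of the upper-triangular Toeplitz matrix $u(\mathbf c)$ equals a skew Schur function $s_{\lambda/\mu}$ under the above map (skew Jacobi--Trudi), while $s_{\lambda/\mu}=\sum_\nu c^\lambda_{\mu\nu}s_\nu$ with nonnegative Littlewood--Richardson coefficients, nonnegativity of all the straight-shape minors $\Delta_\nu(u(\mathbf c))$ already forces total nonnegativity of $u(\mathbf c)$. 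Hence a multiplicative $\chi$ with $\chi(e)=1$ is a normalised character of $S_\infty$ if and only if $\mathbf c(\chi)$ is totally nonnegative; by the structural input it is then extremal, and conversely an extremal character is multiplicative with all restrictions honest characters, so $\mathbf c(\chi)$ is totally nonnegative. This proves the first two assertions.

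For the explicit formula I would take logarithms of $\sum_{n\ge 0}c_n x^n=\exp(\sum_{k\ge 1}\frac{p_k}{k}x^k)$ after substituting the Schoenberg factorisation of \cref{t:EdreiIntro}: since $-\log(1-\alpha_i x)+\log(1+\beta_i x)=\sum_{k\ge 1}\frac1k(\alpha_i^k+(-1)^{k-1}\beta_i^k)x^k$, comparing coefficients of $x^k$ gives $p_k=\sum_i\alpha_i^k+(-1)^{k-1}\sum_i\beta_i^k=\sum_i\alpha_i^k-\sum_i(-\beta_i)^k$ for $k\ge 2$, while the $x^1$-coefficient yields $1=p_1=\gamma+\sum_i(\alpha_i+\beta_i)$, i.e.\ $\gamma=1-\sum_i(\alpha_i+\beta_i)\ge 0$ as in \cref{r:gamma} --- in particular $\gamma$ does not enter the cycle values. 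Multiplicativity of $\chi$ then yields at once the stated product formula on a product of disjoint nontrivial cycles. The converse --- that every $(\boldalpha,\boldbeta)\in\Omega_S$ occurs --- is the ``if'' half of \cref{t:EdreiIntro}: the sequence $\mathbf c$ built from such parameters is totally nonnegative, hence by the previous paragraph defines an extremal character of $S_\infty$ with precisely the prescribed cycle values.

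The main obstacle is the single non-elementary ingredient, the equivalence of extremality and multiplicativity. The direction extremal $\Rightarrow$ multiplicative is obtained by passing to the finite factor representation associated with $\chi$ and using that two subgroups of $S_\infty$ with disjoint supports generate commuting, ``independent'' subalgebras on which the factor trace factorises, and the reverse implication is handled similarly. Alternatively, closer to the modern viewpoint, one can run the whole classification through the Vershik--Kerov ergodic method: normalised characters of $S_\infty$ correspond bijectively to central (harmonic) coherent systems on the Young lattice, the extremal ones to the ergodic such systems, and the ergodic systems are classified by computing the boundary of the Young graph, i.e.\ the asymptotics of the relative dimensions $\dim(\lambda,\Lambda_N)/\dim\Lambda_N$ along sequences $\Lambda_N$ with prescribed limiting row- and column-frequencies; this boundary is exactly $\Omega_S$, and the computation effectively reproves \cref{t:EdreiIntro}. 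It is this boundary computation (or, on Thoma's original route, the operator-algebraic factorisation lemma) that carries the weight; the rest is the Jacobi--Trudi bookkeeping above together with Edrei's theorem.
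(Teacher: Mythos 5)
The paper does not give a proof of this theorem: it is recalled as a classical result, with \cite{Thoma} for the original and \cite{BorodinOlshanskiBook} for a modern account, so there is no in-paper argument to compare your attempt against. Taken on its own, your sketch follows the standard symmetric-function route and its bookkeeping is essentially right: the specialisation $p_k\mapsto\chi(\sigma_k)$ identifies $c_n$ with the image of $h_n$, Jacobi--Trudi identifies the straight-shape Toeplitz minors with the Fourier coefficients $\langle\chi|_{S_n},\chi_\lambda\rangle$, the Littlewood--Richardson argument correctly reduces total nonnegativity of $u(\mathbf c)$ to nonnegativity of the straight-shape minors, and taking logarithms in the Schoenberg factorisation gives $p_k=\sum_i\bigl(\alpha_i^k-(-\beta_i)^k\bigr)$ and $\gamma=1-\sum_i(\alpha_i+\beta_i)$ as required. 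You are also right to isolate the equivalence ``extremal $\Leftrightarrow$ multiplicative'' as the genuinely non-elementary input, and it is reasonable to invoke it rather than reprove it, since that (together with Edrei) is the real content of the classification.

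There is one point you wave through that deserves more care. Your computation gives $\Delta_\lambda(u(\mathbf c))=\langle\chi|_{S_n},\chi_\lambda\rangle$, and you then claim this is ``the asserted decomposition'' into normalised characters ``in the normalisation of the statement''. But the coefficient of $\bar\chi_\lambda=\chi_\lambda/\chi_\lambda(e)$ in the unique convex decomposition of $\chi|_{S_n}$ is $\chi_\lambda(e)\cdot\langle\chi|_{S_n},\chi_\lambda\rangle$, not $\langle\chi|_{S_n},\chi_\lambda\rangle$; correspondingly $\sum_{\lambda\vdash n}\Delta_\lambda(u(\mathbf c))$ is not $1$ in general (for $n=3$ it is $1-c_2+c_3$), whereas $\sum_{\lambda\vdash n}\chi_\lambda(e)\,\Delta_\lambda(u(\mathbf c))=1$. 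So the decomposition consistent with your Jacobi--Trudi identification is $\chi|_{S_n}=\sum_{\lambda}\Delta_\lambda(u(\mathbf c))\,\chi_\lambda$, equivalently $\sum_\lambda\chi_\lambda(e)\,\Delta_\lambda(u(\mathbf c))\,\bar\chi_\lambda$. This is most likely a slip in the statement as recalled in the paper rather than in your mathematics, but as written your proof asserts an identity with the wrong normalisation, and the parenthetical does not fix it; you should state the dimension factor explicitly rather than glossing it. This does not affect the first or third assertions, whose proofs in your sketch are fine.
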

\begin{remark} Note that because of the $c_1=1$ normalisation in Thoma's theorem the parameter space for the extremal characters of $S_\infty$ is the subset of $\Omega_S$ defined by
\[
\Omega_T=\left\{(\boldalpha,\boldbeta)\in\R_{\ge 0}^{\N}\times \R_{\ge 0}^\N\left|\begin{array}{l} \boldalpha=(\alpha_i)_{i\in\N} \text{ with } \alpha_i\ge \alpha_{i+1} \\ \boldbeta=(\beta_j)_{j\in\N} \text{ with }  \beta_{j}\ge \beta_{j+1}\\ \sum_{k=1}^\infty \alpha_k+\beta_k\le 1\end{array}\right.\right\},
\]
where we set $\gamma=1-\sum_{k=1}^\infty (\alpha_k+\beta_k)$ in \eqref{e:SchoenbergFormula}. This is a compact metrisable space sometimes called the Thoma simplex. The multiplicativity of $\chi$ means that $\chi$ is in fact determined by its values on nontrivial $k$-cycles, $\chi((1\dotsc k))=\sum_{i=1}^\infty\left(\alpha_i^k + (-1)^{k+1}\beta_i^k\right)$. We may henceforth refer to the  parameters $(\boldalpha,\boldbeta)$ from $\Omega_S$ as Schoenberg parameters and the parameters in $\Omega_T$ as Thoma parameters.
\end{remark}

\subsection{The Vershik-Kerov Theorem}\label{s:VKThm} Various proofs of the Edrei and Thoma Theorems have been given since the original ones, see \cite{VershikKerovAsymptotic,Olshanskii90,OkThoma,
OkInfSymmGrp,KOO-IMRN,GohmKostler12,BufetovGorinThoma,PetrovThoma,VT:SchurWeyl} see also \cite{Kerov-thesis,VershikSurvey}.  The proof of Vershik and Kerov \cite{VershikKerovAsymptotic} involves an asymptotic interpretation of the Thoma parameters that adds another intriguing dimension to the picture.  Let us describe a partition $\lambda\vdash n$ via its modified Frobenius coordinates $(a|b)$, where $a$ and $b$ are the sequences of `arm-' and `leg-lengths',  $a_i=\lambda_i-i+\frac 12$ and $b_j=\lambda'_j-j+\frac 12$ of the corresponding Young diagram. Here $\lambda'$ is the transpose of $\lambda$, and $i,j=1,\dotsc,m(\lambda)$ with $m(\lambda)$ equal to the number of diagonal boxes of $\lambda$. 
\begin{theorem}[{Vershik-Kerov Theorem \cite{VershikKerovAsymptotic}}] 
For any sequence of partitions $(\lambda^{(n)})_{n=1}^\infty$ with $\lambda^{(n)}\vdash n$ described by their modified  Frobenius coordinates $(a^{(n)}|b^{(n)})$ the following are equivalent.
\begin{itemize}
\item The sequences $\left(a_i^{(n)}/n\right)_n$, and $\left(b_j^{(n)}/n\right)_n$ converge as $n\to\infty$.  
\item For any $\sigma\in S_{\infty}$ the sequence $\left(\bar\chi_{\lambda^{(n)}}(\sigma)\right)_n$  of normalised character values converges as $n\to\infty$. 
\end{itemize}
In this case, the class function $\chi:S_{\infty}\to\R$ defined by $\chi(\sigma)=\lim_{n\to\infty}\bar\chi_{\lambda^{(n)}}(\sigma)$ is an extremal character of~$S_\infty$ and every extremal character arises in this way. Moreover, the limits
\begin{equation}
\alpha_i:=\lim_{n\to\infty}\frac{a_i^{(n)}}n \quad\text{ and }\quad \beta_j:=\lim_{n\to\infty}\frac{b_j^{(n)}}n
\end{equation} 
are the Thoma parameters of $\chi$. 
\end{theorem}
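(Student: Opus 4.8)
\emph{Strategy.}
I would route the argument through the representation theory of the finite symmetric groups: express $\bar\chi_\lambda$ on a $k$-cycle in terms of the modified Frobenius coordinates of $\lambda$, isolate a ``power-sum'' leading term after rescaling by $n$, and then recognise the limit via \cref{t:ThomaIntro}. The hypothesis $\lambda^{(n)}\vdash n$ enters crucially through the exact identity $\sum_i a_i^{(n)}+\sum_j b_j^{(n)}=n$ (the $m(\lambda^{(n)})$ principal hooks of $\lambda^{(n)}$ have sizes $a_i^{(n)}+b_i^{(n)}$ and partition it), which forces any limit $(\boldalpha,\boldbeta)$ of the rescaled coordinates into the Thoma simplex $\Omega_T$. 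The key analytic input is the classical fact---going back to Frobenius, and in modern language a statement about Kerov--Olshanski shifted symmetric functions---that for each fixed $k\ge 1$, with $(n)_k:=n(n-1)\cdots(n-k+1)$ the falling factorial, one has
\[
(n)_k\cdot\bar\chi_\lambda\bigl((k,1^{n-k})\bigr)=\sum_i (a_i)^k+(-1)^{k+1}\sum_j (b_j)^k+R_k(\lambda),
\]
where $R_k$ is a correction of strictly lower degree in the coordinates, so that $R_k(\lambda)=O(n^{k-1})$ uniformly over $\lambda\vdash n$. (For $k=1,2$ there is no correction: the left side is $n$ for $k=1$, and $\sum_i a_i^2-\sum_j b_j^2=2\sum_\square c(\square)$ for $k=2$.) I would either cite this or recover the leading term from the Murnaghan--Nakayama rule. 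A companion fact, proved the same way, is the \emph{approximate factorisation property}: for fixed disjoint $\sigma,\tau\in S_\infty$ and \emph{any} sequence $\lambda^{(n)}\vdash n$ one has $\bar\chi_{\lambda^{(n)}}(\sigma\tau)-\bar\chi_{\lambda^{(n)}}(\sigma)\,\bar\chi_{\lambda^{(n)}}(\tau)\to0$, since the analogous quantities for general cycle types multiply up to lower-order terms, $|\bar\chi_\lambda|\le 1$, and $(n)_a(n)_b/(n)_{a+b}\to1$.

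\emph{Coordinates converge $\Rightarrow$ characters converge.}
Assume $a_i^{(n)}/n\to\alpha_i$ and $b_j^{(n)}/n\to\beta_j$. Strict monotonicity of the Frobenius coordinates together with $\sum_i a_i^{(n)}\le n$ gives $a_i^{(n)}/n\le 1/i$ (and likewise for $b$), so for each fixed $k\ge 2$ the terms $(a_i^{(n)}/n)^k\le i^{-k}$ are dominated in $\ell^1$, whence $\sum_i(a_i^{(n)}/n)^k\to\sum_i\alpha_i^k$, and similarly for $\beta$. Dividing the displayed formula by $n^k$ and using $(n)_k/n^k\to 1$ and $R_k(\lambda^{(n)})/n^k\to0$ yields, for every $k\ge 2$,
\[
\bar\chi_{\lambda^{(n)}}\bigl((k,1^{n-k})\bigr)\ \longrightarrow\ \sum_i\alpha_i^k+(-1)^{k+1}\sum_j\beta_j^k .
\]
Since $\sum_i\alpha_i+\sum_j\beta_j\le\liminf_n n^{-1}(\sum_i a_i^{(n)}+\sum_j b_j^{(n)})=1$, the pair $(\boldalpha,\boldbeta)$ lies in $\Omega_T$, so by \cref{t:ThomaIntro} there is an extremal character $\chi_0$ of $S_\infty$ with these Thoma parameters, with value $\prod_{\ell}\bigl(\sum_i\alpha_i^{k_\ell}+(-1)^{k_\ell+1}\sum_j\beta_j^{k_\ell}\bigr)$ on a product of disjoint cycles of lengths $k_1,\dots,k_r\ge 2$. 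The approximate factorisation property turns the single-cycle limit above into $\bar\chi_{\lambda^{(n)}}(\sigma)\to\chi_0(\sigma)$ for every such $\sigma$ (and $\bar\chi_{\lambda^{(n)}}(e)=1=\chi_0(e)$); since every element of $S_\infty$ is such a product, $\bar\chi_{\lambda^{(n)}}$ converges pointwise, the limit $\chi$ equals $\chi_0$, and its Thoma parameters are $(\boldalpha,\boldbeta)$.

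\emph{Characters converge $\Rightarrow$ coordinates converge, and surjectivity.}
Since $a_i^{(n)}/n,\,b_j^{(n)}/n\in[0,1]$, a diagonal argument extracts from any subsequence a further subsequence along which all rescaled coordinates converge, to some $(\boldalpha,\boldbeta)$; by the previous paragraph the corresponding characters converge to the extremal $\chi_0$ with $k$-cycle values $\sum_i\alpha_i^k+(-1)^{k+1}\sum_j\beta_j^k$. As the full sequence $\bar\chi_{\lambda^{(n)}}$ converges by hypothesis, these numbers (for $k\ge 2$) do not depend on the subsequence; and they determine $(\boldalpha,\boldbeta)$, being the coefficients of $x^k/k$ in $\log\bigl(\prod_i(1-\alpha_ix)^{-1}\prod_j(1+\beta_jx)\bigr)$, a meromorphic function whose zeros are exactly $\{-1/\beta_j:\beta_j>0\}$ and whose poles are exactly $\{1/\alpha_i:\alpha_i>0\}$, so the decreasing summable sequences (and hence the remaining zero entries, once $\sum_i\alpha_i,\sum_j\beta_j$ are read off) are recovered---this is the injectivity of Thoma's parametrisation. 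Thus all subsequential limits of $(a_i^{(n)}/n,\,b_j^{(n)}/n)$ agree, the sequences converge, and the ``moreover'' assertions follow from the previous paragraph. Finally, every extremal character arises: given $(\boldalpha,\boldbeta)\in\Omega_T$ one writes down partitions $\lambda^{(n)}\vdash n$ with modified Frobenius coordinates $\approx\alpha_i n$ and $\approx\beta_j n$, spread over a slowly growing number $m(n)\to\infty$ (with $m(n)=o(\sqrt n)$) of diagonal cells, the leftover $\approx\gamma n$ boxes being absorbed by $O(n/m(n))$ perturbations preserving strict monotonicity; then $a_i^{(n)}/n\to\alpha_i$, $b_j^{(n)}/n\to\beta_j$, and the implication already proved applies.

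\emph{Where the difficulty lies.}
Everything except the displayed cycle-character formula (and its companion, approximate factorisation) is soft: monotone and dominated convergence, a compactness-plus-uniqueness argument, and \cref{t:ThomaIntro}. The real work is in that formula---showing that the normalised $k$-cycle character is, to leading order in $n$, the $k$-th power sum of the rescaled Frobenius coordinates, with a uniformly controlled lower-order remainder. This is the genuinely combinatorial core, requiring a careful analysis of characters of $S_n$ via the Murnaghan--Nakayama rule (equivalently, Jucys--Murphy elements, or Kerov's transition measure).
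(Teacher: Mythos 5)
This statement is cited background in the paper (Section~\ref{s:VKThm}), not something the paper proves; it is quoted from \cite{VershikKerovAsymptotic} as one of three motivating classical theorems, and the article then moves on to its own finite-dimensional and tropical analogues. So there is no ``paper's own proof'' to compare against, and your sketch must be judged on its own.

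On its own terms, your outline is sound and follows what is now the standard analytic route (the Kerov--Olshanski/Biane ``polynomial functions on Young diagrams'' approach) rather than Vershik and Kerov's original ergodic/martingale argument. The skeleton is correct: the identity $\sum_i a_i^{(n)}+\sum_j b_j^{(n)}=n$ forces subsequential limits into $\Omega_T$; the domination $a_i^{(n)}/n<1/i$ (from strict decrease plus $\sum a_i\le n$) gives dominated convergence of the power sums for $k\ge 2$; the approximate factorisation property upgrades single-cycle convergence to convergence on all of $S_\infty$; compactness of $[0,1]^\N\times[0,1]^\N$ plus injectivity of Thoma's parametrisation gives the converse; and the explicit diagram construction (absorbing the $\gamma n$ surplus in a block whose rescaled Frobenius coordinates vanish) gives surjectivity. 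A few spots deserve a little more care than you give them. First, in ``$R_k(\lambda)=O(n^{k-1})$ uniformly'' you should say explicitly that the correction is a polynomial in the supersymmetric power sums $p_1,\dots,p_{k-1}$ of total weight $\le k-1$, so that each term is bounded via $\sum a_i^j\le n^{j-1}\sum a_i\le n^j$; ``strictly lower degree'' alone could in principle leave room for $O(\sqrt n)$-size coordinate counts to spoil the estimate. Second, in the uniqueness step it is cleaner to say directly that the limit values $p_k$ for $k\ge 2$ determine the multiplicative extremal character, which by Thoma's theorem (which you are entitled to use, as \cref{t:ThomaIntro}) determines $(\boldalpha,\boldbeta)$; the detour through zeros and poles of the meromorphic continuation works but requires a sentence explaining why the unknown $e^{p_1x}$ factor cannot interfere with reading off zeros and poles. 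Finally, the genuine combinatorial core---the exact power-sum formula for $(n)_k\,\bar\chi_\lambda((k,1^{n-k}))$ with the stated leading term, and the companion approximate factorisation---is correctly identified by you as the heart of the argument, and is asserted rather than proved; a complete write-up would need either the Murnaghan--Nakayama computation or a citation to Wassermann/Kerov--Olshanski/Ivanov--Olshanski. With those points attended to, this is a correct modern proof of the theorem.
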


While we will only be concerned with upper-triangular matrices and one-sided sequences, we mention that both Edrei and Thoma's theorems have analogues for two-sided sequences \cite{EdreiDouble,Voiculescu} with $S_\infty$ replaced by $U(\infty)$, as well as there existing versions for other infinite classical types, e.g. \cite{VoiculescuClassical,HiraiHirai}. 
Another interesting variant is the continuous analogue of so-called `P\'olya frequency functions' also advanced by Schoenberg, see \cite{Grochenig23} and references therein. A modern generalisation parallel to the one considered here relates to loop groups \cite{LP:I,LP:II}, and  a further interesting recent  direction  involves working over $\N$  \cite{BorgerGrinberg,HoHai,Davydov}.

\section{Finite totally positive Toeplitz matrices and asymptotics} \label{s:MainFiniteReal}
  As a consequence of Thoma's theorem, the classification of irreducible representations of $S_n$ is a natural finite analogue of the parametrisation of totally nonnegative Toeplitz matrices. %$u(\mathbf c)$, and the asymptotic relationship between these two theories was uncovered by Vershik and Kerov. 
  A different natural finite analogue would be the problem of  parametrising  finite totally nonnegative upper-triangular Toeplitz matrices. This problem was solved fifty years after Edrei's theorem, in  \cite{rietschJAMS,rietschNagoya}. Moreover, the finite theory has its own separate context, relating to flag varieties, quantum cohomology and mirror symmetry. 

\subsection{A parametrisation theorem for finite Toeplitz matrices}\label{s:FiniteParam}
Let $U_+=U_+^{(n+1)}$ denote the unipotent upper-triangular matrices and $T=T_{SL_{n+1}}$ the diagonal torus in $SL_{n+1}$ with Lie algebra $\mathfrak h_{SL_{n+1}}$.  We will now focus on total \textit{positivity}, by which we mean that all non-trivial minors should be strictly positive. 
Let $U_+(\R_{>0})$ denote the totally positive part of $U_+$ and $\ToeplitzU_{n+1}(\R_{>0})$ its subset of totally positive %$(n+1)\times (n+1)$ 
Toeplitz matrices.  Note these are both open dense in their totally nonnegative counterparts. % $\ToeplitzU_{n+1}(\R_{>0})$ is an open dense subset of its totally nonnegative counterpart  $\ToeplitzU_{n+1}(\R_{\ge 0})$. 
We let $\Delta_i(u)$ denote the minor with row set $[1,i]$ and  column set $[n+2-i,n+1]$ of an $(n+1)\times (n+1)$ matrix $u$.   
\begin{theorem}[{\cite{rietschJAMS,rietschNagoya}}]\label{t:IntroRfin}
We have a homeomorphism
\begin{eqnarray}\label{e:R03}
\ToeplitzU_{n+1}(\R_{>0})&\longrightarrow & T_{SL_{n+1}}(\R_{>0})
\end{eqnarray}
defined by sending $u\in \ToeplitzU_{n+1}(\R_{>0})$ to the  matrix $\Delta(u)\in T_{SL_{n+1}}(\R_{>0})$ with diagonal entries
\begin{equation}\label{e:di}
d_1={\Delta_{1}(u)},\   
d_2=\frac{\Delta_{2}(u)}{\Delta_{1}(u)},\ 
\dotsc,\ d_n=\frac{\Delta_{n}(u)}{\Delta_{{n-1}}(u)},\ d_{n+1}=\frac{1}{\Delta_{n}(u)}.
\end{equation}
\end{theorem}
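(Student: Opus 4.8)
The plan is to show that $u\mapsto\Delta(u)$ is a continuous bijection and then conclude via invariance of domain. Three points are routine. First, for totally positive $u$ and $1\le i\le n$ the minor $\Delta_i(u)$ is nontrivial — it does not vanish identically in the matrix entries (for instance the monomial $c_{n+1-i}^{\,i}$ occurs in its expansion) — so $\Delta_i(u)>0$; hence each $d_i>0$, and the product $d_1d_2\cdots d_{n+1}$ telescopes to $1$, so $\Delta(u)\in T_{SL_{n+1}}(\R_{>0})$. Second, the map is continuous, being a ratio of polynomials in the entries with nowhere-vanishing denominators. Third, viewing $\ToeplitzU_{n+1}(\R_{>0})$ as the open semialgebraic subset of $\R^n$ cut out in the coordinates $(c_1,\dots,c_n)$ by the finitely many minor inequalities, any continuous injection of it into $\R^n\supseteq T_{SL_{n+1}}(\R_{>0})$ is automatically an open map, so a continuous bijection onto $T_{SL_{n+1}}(\R_{>0})$ is automatically a homeomorphism. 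Everything thus reduces to bijectivity of $\psi:=(\Delta_1,\dots,\Delta_n)\colon\ToeplitzU_{n+1}(\R_{>0})\to\R_{>0}^n$ (which carries the same information as $u\mapsto(d_1,\dots,d_{n+1})$, since $\Delta_i=d_1\cdots d_i$): for each positive tuple there must be exactly one totally positive Toeplitz matrix realising it.

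Geometrically the $\Delta_i(u)$ are the ``antidiagonal'' leading minors of $u$, and on the full group $U_+$ (of dimension $\binom{n+1}{2}$) they are very far from pinning $u$ down; the theorem says the $n$-dimensional Toeplitz slice is transverse to the fibers of $u\mapsto(\Delta_i(u))_i$ and meets each fiber over a positive point in a unique totally positive matrix. I would prove the bijection in three steps. (a) $\psi$ is a local homeomorphism on $\ToeplitzU_{n+1}(\R_{>0})$: its Jacobian in the $c_j$ (equivalently, that of $(\log\Delta_i)_i$) is nonvanishing there, the relevant determinant being a subtraction-free combination of minors of $u$ — this is where total positivity is used. (b) $\psi$ is proper onto $\R_{>0}^n$: approaching the boundary of the totally positive Toeplitz region forces some $\Delta_i$ to $0$ or $\infty$, so that the defining inequalities of the region are effectively just $\Delta_1>0,\dots,\Delta_n>0$ together with finiteness. (c) By (a) and (b), $\psi$ is a (surjective) covering of the contractible connected space $\R_{>0}^n$; since $\ToeplitzU_{n+1}(\R_{>0})$ is connected — which itself needs a short separate argument — it is a single sheet, i.e.\ a homeomorphism.

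The main obstacle is that steps (a) and (b) are \emph{genuinely positive} statements and cannot come from formal algebra. Indeed, as a map of complex affine $n$-spaces the tuple $(\Delta_1,\dots,\Delta_n)$ in the $c_j$ is dominant of degree strictly larger than $1$: already for $n=2$ one has $\Delta_1=c_2$ and $\Delta_2=c_1^2-c_2$, so $c_1=\pm\sqrt{\Delta_1+\Delta_2}$ and it is positivity that selects the branch; for $n\ge 3$ the degree is larger still, as one sees from the identification $\Delta_i=s_{(i^{\,n+1-i})}(\beta)$ with $1+c_1x+\cdots+c_nx^n=\prod_{j=1}^n(1+\beta_j x)$, together with a Hilbert-series count showing that these rectangular Schur polynomials do not generate the ring of symmetric functions in $n$ variables. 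Thus every positive fiber of $\psi$ collapses to a single totally positive point only because of total positivity, and some input specific to it is unavoidable. The alternative route — essentially the one carried out in \cite{rietschJAMS,rietschNagoya} — is to identify $\ToeplitzU_{n+1}(\R_{>0})$ with the totally positive part of the critical locus of the mirror superpotential of the flag variety $\mathcal Fl_{n+1}$, under which the minors $\Delta_i(u)$ become the (normalised) quantum parameters; bijectivity is then the statement that for each positive value of the quantum parameters there is a unique totally positive critical point. In either approach, invariance of domain upgrades the continuous bijection to the asserted homeomorphism.
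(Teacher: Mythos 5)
Your strategy is genuinely different from the one behind the paper's citation: the published proofs go through the Peterson isomorphism with quantum cohomology of $\mathcal{F}\ell_{n+1}$ (positivity of Gromov--Witten invariants in \cite{rietschJAMS}) or through mirror symmetry and uniqueness of the totally positive critical point of the superpotential (\cite{rietschNagoya,rietsch}). You instead propose a covering-space argument: show $\psi=(\Delta_1,\dots,\Delta_n)$ is a proper local homeomorphism of the connected domain onto $\R_{>0}^n$ and conclude it is a single-sheeted covering, upgrading to a homeomorphism by invariance of domain. The skeleton is logically sound, and your remarks — that the complex map $(\Delta_i)_i$ has degree $>1$, and that $\Delta_i = s_{(i^{\,n+1-i})}(\boldbeta)$ in the roots of the generating polynomial, so that a Hilbert-series count rules out an algebraic inverse — do correctly isolate why some positivity-specific input is unavoidable. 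This is valuable framing.

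However, the proposal has genuine gaps at exactly the points where the theorem is hard, and you acknowledge them without filling them. Step (a), the claim that the Jacobian of $(\log\Delta_i)_i$ in $(c_1,\dots,c_n)$ is a subtraction-free combination of minors, is asserted, not proved; I am not aware of such an identity and you give no derivation, yet without it the local-homeomorphism step collapses. (In the mirror picture, nonvanishing of this Jacobian amounts to nondegeneracy of the totally positive critical point of the superpotential, which is itself a nontrivial statement.) Step (b), properness, is likewise asserted: the boundary of $\ToeplitzU_{n+1}(\R_{>0})$ is cut out by the vanishing of \emph{many} minors, not only the $n$ antidiagonal ones $\Delta_1,\dots,\Delta_n$, and the claim that escaping the region forces some $\Delta_i\to 0$ or $\infty$ is precisely the content that the $\Delta_i$ alone govern the positive stratum; it needs a proof. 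And step (c), connectedness of $\ToeplitzU_{n+1}(\R_{>0})$, is flagged as ``needing a short separate argument'' but is not given (it is not simply inherited from connectedness of $U_+(\R_{>0})$, since intersecting a connected set with an affine subspace need not stay connected). So what you have is a clean reduction of the theorem to three positivity-specific facts, each of comparable depth to the original statement; as it stands, it is not a proof.
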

Composing \eqref{e:R03} with the homeomorphism $T_{SL_{n+1}}(\R_{>0})\overset \sim\longrightarrow \R_{>0}^{n}$ which sends the diagonal matrix $(\delta_{ij}d_i)_{i=1}^{n+1}$ to $(\frac{d_{2}}{d_1},\frac{d_3}{d_2},\dotsc,\frac{d_{n+1}}{d_n})$ furthermore defines a homeomorphism
\begin{eqnarray}\label{e:q-paramToeplitz}
\mathbf q=(q_1,\dotsc, q_n):\ToeplitzU_{n+1}(\R_{>0})&\longrightarrow & \R_{>0}^{n},
\end{eqnarray}
that can be considered as giving a parametrisation of $\ToeplitzU_{n+1}(\R_{>0})$ with $n$ positive parameters. 

The first proof of this parametrisation theorem involved quantum cohomology of flag varieties, positivity of Gromov-Witten invariants and work of Dale Peterson \cite{peterson, Kos:QCoh}. Here the positive parameters are naturally interpreted as quantum parameters and related to curve class  generators for  the flag variety $\mathcal Fl_{n+1}=SL_{n+1}/B$  \cite{rietschJAMS}. Then in \cite{rietschNagoya,rietsch} another approach to the same result and its $SL_{n+1}/P$ generalisation was given that involved mirror symmetry and took inspiration from early Laurent polynomial mirror constructions for flag varieties.
See also Section~\ref{s:QCoh}. We now take a particular perspective on $\ToeplitzU_{n+1}(\R_{>0})$ as subset of $U_+(\R_{>0})$. We use standard notations from algebraic groups, e.g.~\cite{Humphreys:LAG,Springer:book}.
\subsection{Toeplitz matrices in standard coordinates}\label{s:StandardCoord}
Lusztig in \cite{Lusztig94} described  $U_+(\R_{>0})$ in terms of specific coordinate charts corresponding  to factorisations of elements into simple root subgroups $x_k(m)=\exp(m e_k)$.  
We choose a particular such positive chart. 
Namely, consider an ordered  index set $\mathcal S_{\le n+1}=\{(i,j)\mid i+j\le n+1\}$ with $(i,j)<(i',j')$ if $i>i'$ and $(i,j)<(i,j')$ if $j<j'$. Then any $u\in U_+(\R_{>0})$ is an ordered~product
\[
u=\prod_{(i,j)\in \mathcal S_{\le n+1}}x_{i+j-1}(m_{ij})
\]
for unique positive coordinates $(m_{ij})_{i+j\le n+1}$. 
For example, for $n=3$ 
%we have coordinates $(m_{ij})_{i+j\le 4}$  and 
the factorisation of $u\in U_+(\R_{>0})$ can be read off conveniently row by row from the diagram below, which gives the matrix shown to its right, 
\begin{equation*}%\begin{center}
\begin{tikzpicture}[scale=1.1]
\node[fill=white] at    (0.5,2.55) {$x_3(m_{31})$}  ;
\node[fill=white] at    (0.5,1.55) {$x_2(m_{21})$}  ;
\node[fill=white] at    (0.5,0.55) {$x_1(m_{11})$}  ;
\node[fill=white] at    (1.8,1.55) {$x_3(m_{22})$}  ;\node[fill=white] at    (1.8,0.55) {$x_2(m_{12})$}  ;
\node[fill=white] at    (3,0.55) {$x_3(m_{13})$}  ;
\node[fill=white] at (5.5,1.5) {} ;
\node[fill=white] at (8.5,1.5) { $u((m_{ij}))=
\begin{pmatrix}1 & m_{11}& m_{11}m_{12} & m_{11} m_{12} m_{13}\\
0 &1& m_{21}+m_{12} & m_{21}m_{22}+m_{21}m_{13}+m_{12}m_{13}  \\
0 & 0& 1& m_{31}+m_{22}+m_{13} \\
0&0&0& 1
\end{pmatrix}$.} ;
\draw[black] (-.2,0.1) -- (-.2,3.1) -- (1.15,3.1) -- (1.15,0.1) -- (3.6,0.1) -- (3.6,1.1)--(-.2,1.1)--(-.2,0.1)--(2.4,0.1)--(2.4,2.1)--(-.2,2.1);
 \end{tikzpicture},
%\end{center}
\end{equation*}
We write $u=u((m_{ij}))$ and call the $m_{ij}$  \textit{standard coordiates}. This chart immediately has two key properties. 
\begin{itemize}
\item It parametrises the totally positive part $U_+(\R_{>0})$ by positive parameters, see \cite{Lusztig94}, with an explicit algebraic inverse given in terms of minors, see Equation~\ref{e:mviaminors}.
\item It extends in a natural way to the projective limit $U_+^{(\infty)}$ giving a standard coordinate system $(m_{ij})_{i,j\in\N}\in \R_{>0}^{\N\times\N}$ for the  infinite totally positive matrices $U_+^{(\infty)}(\R_{>0})$. 
\end{itemize}
The third key property will be that the totally positive Toeplitz matrices (finite and infinite) have a convenient description. Namely, the Toeplitz condition imposes relations on the standard coordinates that can be expressed as a kind of multiplicative `divergence-free' property for a labelled quiver.

\begin{defn}\label{d:divergence-free} Consider a connected quiver $Q$. Assume further that $Q$ has precisely one source vertex, to which we assign the label $0$. By a (positive) vertex labelling of $Q$, we mean an assignment of a (positive) real number to every non-source vertex. Given such a labelling we also label each arrow by the vertex label at its head minus the vertex label at its tail. We call a labelling  `\textit{divergence-free}' if for every vertex that is neither a sink nor a source, the product of the incoming arrow labels equals to the product of the outgoing ones. We may also replace $\R$ and $\R_{>0}$ in this definition by any other ring $\mathcal R$ and positive subsemifield $\RR_{>0}$.
\end{defn}

\begin{defn}\label{d:Qn+1}
Recall the indexing set $\mathcal S_{\le n+1}$. We 
define a quiver $Q_{n+1}=(\mathcal V_{n+1},\mathcal A_{n+1})$ with vertex set $\mathcal V_{n+1}=\mathcal S_{\le n+1}\cup\{(0,0)\}$ and arrows $\mathcal A_{n+1}=\{a_{ij}\mid (i,j)\in \mathcal S_{\le n+1}\}\sqcup \{a'_{ij}\mid (i,j)\in \mathcal S_{\le n+1}\}$.  Here the index of an arrow signifies its head, $h(a_{ij})=h(a'_{ij})=(i,j)$. The arrows $a_{ij}$  have tail $(i-1,j)$, or if  $i=1$ then  $(0,0)$, and are called `vertical'. The arrows $a'_{ij}$ have tail $(i,j-1)$, or if $j=1$ then $(0,0)$, and are called `horizontal'. 
The quiver $Q_\infty$ is defined similarly, with $S_{\le n+1}$ replaced by $\N\times\N$. See
%We embed these quivers into the plane as shown in 
Figure~\ref{f:Qn+1} and Figure~\ref{f:QinfQ5}.
\end{defn}

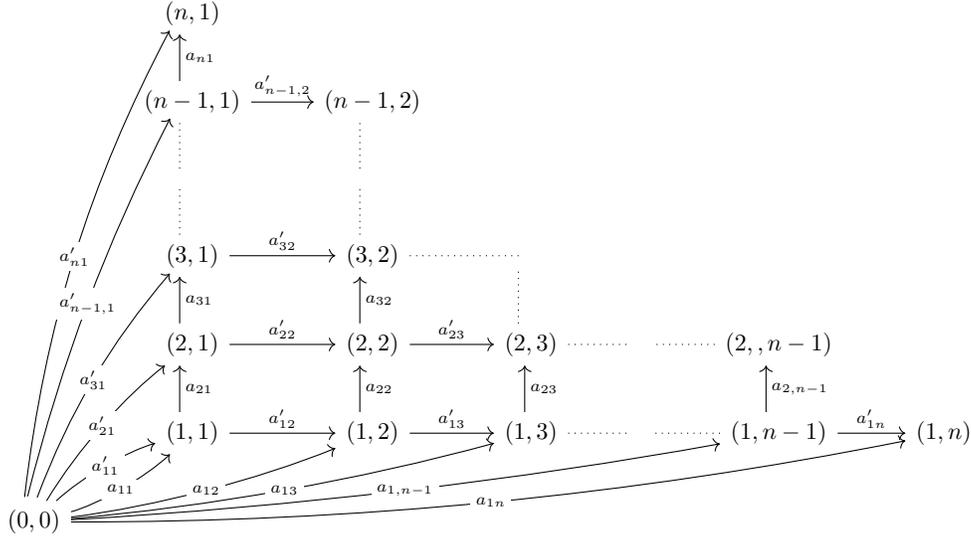
\begin{figure}
\[\begin{tikzcd}[scale cd=0.9]
	& {(n,1)} \\
	& {(n-1,1)} & {(n-1,2)} \\
	& {} & {} & {} \\
	& {(3,1)} & {(3,2)} & {} & {} \\
	& {(2,1)} & {(2,2)} & {(2,3)} & {} & {(2,,n-1)} \\
	& {(1,1)} & {(1,2)} & {(1,3)} & {} & {(1,n-1)} & {(1,n)} \\
	{(0,0)}
	\arrow["{a_{n1}}"', shift left=2, from=2-2, to=1-2]
	\arrow["{a'_{n-1,2}}", from=2-2, to=2-3]
	\arrow[shift right=2, dotted, no head, from=2-2, to=3-2]
	\arrow[shift right=2, dotted, no head, from=2-3, to=3-3]
	\arrow[shift left=2, dotted, no head, from=4-2, to=3-2]
	\arrow["{a'_{32}}", from=4-2, to=4-3]
	\arrow[shift left=2, dotted, no head, from=4-3, to=3-3]
	\arrow[dotted, no head, from=4-3, to=4-4]
	\arrow["{a_{31}}"', shift left=2, from=5-2, to=4-2]
	\arrow["{a'_{22}}", from=5-2, to=5-3]
	\arrow["{a_{32}}"', shift left=2, from=5-3, to=4-3]
	\arrow["{a'_{23}}", from=5-3, to=5-4]
	\arrow[shift left=2, dotted, no head, from=5-4, to=4-4]
	\arrow[dotted, no head, from=5-4, to=5-5]
	\arrow[dotted, no head, from=5-6, to=5-5]
	\arrow["{a_{21}}"', shift left=2, from=6-2, to=5-2]
	\arrow["{a'_{12}}", from=6-2, to=6-3]
	\arrow["{a_{22}}"', shift left=2, from=6-3, to=5-3]
	\arrow["{a'_{13}}", from=6-3, to=6-4]
	\arrow["{a_{23}}"', shift left, from=6-4, to=5-4]
	\arrow[dotted, no head, from=6-4, to=6-5]
	\arrow["{a_{2,n-1}}"', shift left=2, from=6-6, to=5-6]
	\arrow[dotted, no head, from=6-6, to=6-5]
	\arrow["{a'_{1n}}", from=6-6, to=6-7]
	\arrow["{a'_{n1}}"{description}, shift left=2, curve={height=-12pt}, from=7-1, to=1-2]
	\arrow["{a'_{n-1,1}}"{description}, shift left=2, curve={height=-6pt}, from=7-1, to=2-2]
	\arrow["{a'_{31}}"{description}, shift left, curve={height=-6pt}, from=7-1, to=4-2]
	\arrow["{a'_{21}}"{description}, curve={height=-6pt}, from=7-1, to=5-2]
	\arrow["{a'_{11}}"{description}, curve={height=-6pt}, from=7-1, to=6-2]
	\arrow["{a_{11}}"{description}, curve={height=6pt}, from=7-1, to=6-2]
	\arrow["{a_{12}}"{description}, curve={height=6pt}, from=7-1, to=6-3]
	\arrow["{a_{13}}"{description}, curve={height=6pt}, from=7-1, to=6-4]
	\arrow["{a_{1,n-1}}"{description}, curve={height=6pt}, from=7-1, to=6-6]
	\arrow["{a_{1n}}"{description}, curve={height=12pt}, from=7-1, to=6-7]
\end{tikzcd}\]
\caption{The quiver $Q_{n+1}$\label{f:Qn+1}}
\end{figure}

\begin{defn}\label{d:vij}
Given any $u\in U_+(\R_{>0})$, we construct a positive vertex labelling of $Q_{n+1}$ by associating
\begin{equation}\label{e:vij}
v_{ij}:= \sum_{\ell=0}^{\min(i,j)-1}\frac{1}{m_{i-\ell,j-\ell}}=\frac 1{m_{i,j}}+\frac{1}{m_{i-1,j-1}}+\dotsc.
\end{equation}
to the vertex $(i,j)$, where $(m_{ij})_{i,j}\in(\R_{>0})^{\mathcal S_{\le n+1}}$ are the standard coordinates of $u$. See Figure~\ref{f:QinfQ5} for an example. For an infinite totally positive matrix $u\in U^{(\infty)}_+(\R_{>0})$ we use its standard coordinates $(m_{ij})_{i,j\in\N}$ and the same formula to associate a vertex labelling of the infinite quiver $Q_{\infty}$. 
\end{defn} 

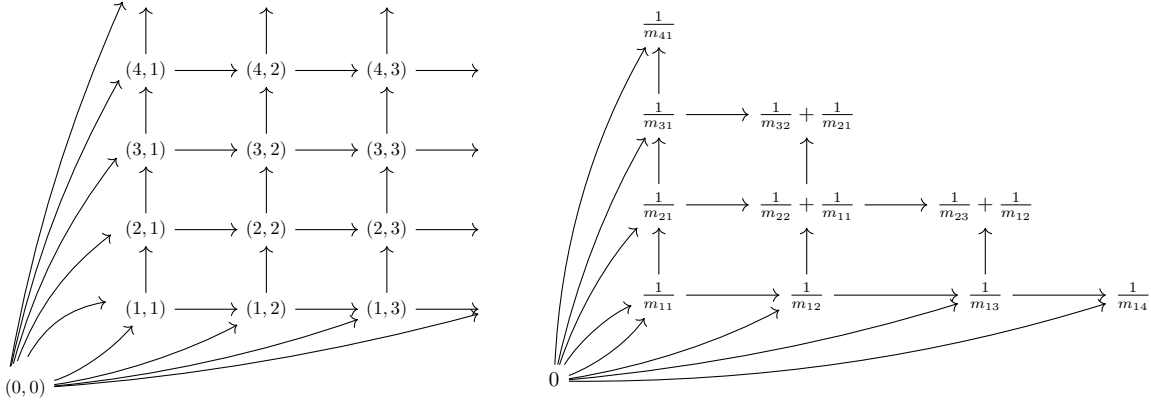
\begin{figure}[h!]
\[\begin{tikzcd}[scale cd=0.7]
	{\quad}& {\quad} 	& {\quad}		& {\quad}& {\quad}	\\
	& { (4,1)} & {(4,2)} &(4,3)  & {\quad} \\
	& { (3,1)} & { (3,2)} & {(3,3)}  &{\quad}\\
	& {(2,1)} & { (2,2)} & { (2,3)}&{\quad} \\
	& {(1,1)} & { (1,2)} & { (1,3)}  &{\quad} \\
	(0,0)
	\arrow[from=2-2, to=1-2]
	\arrow[from=2-2, to=2-3]
	\arrow[from=2-3, to=2-4]
	\arrow[from=2-4, to=2-5]
	%\arrow[from=2-5, to=2-6]
	\arrow[from=3-2, to=2-2]
	\arrow[from=3-2, to=3-3]
	\arrow[from=3-3, to=2-3]
	\arrow[from=3-3, to=3-4]
	\arrow[from=3-4, to=3-5]
	%\arrow[from=3-5, to=3-6]
	\arrow[from=4-2, to=3-2]
	\arrow[from=4-2, to=4-3]
	\arrow[from=4-3, to=3-3]
	\arrow[from=4-3, to=4-4]
	\arrow[from=4-4, to=3-4]
	\arrow[from=4-4, to=4-5]
	\arrow[from=5-2, to=4-2]
	%\arrow[from=4-5, to=4-6]
	\arrow[from=5-2, to=5-3]
	\arrow[from=5-3, to=4-3]
	\arrow[from=5-3, to=5-4]
	\arrow[from=5-4, to=4-4]
	\arrow[from=5-4, to=5-5]
	%\arrow[from=5-5, to=4-5]
	%\arrow[from=5-5, to=5-6]
	\arrow[shift left=3,curve={height=-6pt}, from=6-1, to=1-2]
	\arrow[shift left=3,curve={height=-6pt}, from=6-1, to=2-2]
	\arrow[shift left=3,curve={height=-6pt}, from=6-1, to=3-2]
	\arrow[shift left=3,curve={height=-6pt}, from=6-1, to=4-2]
	\arrow[shift left=3,curve={height=-6pt}, from=6-1, to=5-2]
	\arrow[from=2-3, to=1-3]
	\arrow[from=2-4, to=1-4]
	%\arrow[from=2-5, to=1-5]
	\arrow[from=3-4, to=2-4]
	%\arrow[ from=3-5, to=2-5]	
	%\arrow[from=4-5, to=3-5]	
	\arrow[curve={height=6pt}, from=6-1, to=5-2]
	\arrow[curve={height=6pt}, from=6-1, to=5-3]
	\arrow[curve={height=6pt}, from=6-1, to=5-4]
	\arrow[curve={height=6pt}, from=6-1, to=5-5]
	%\arrow[curve={height=6pt}, from=6-1, to=5-6]
\end{tikzcd}
\quad
% https://q.uiver.app/#q=WzAsMTEsWzEsMywiMS9tX3sxMX0iXSxbMiwzLCIxL21fezEyfSJdLFszLDMsIjEvbV97MTN9Il0sWzEsMiwiMS9tX3syMX0iXSxbMSwxLCIxL21fezMxfSJdLFsyLDIsIlxcZnJhYzF7bV97MjJ9fStcXGZyYWMxe21fezExfX0iXSxbMCw0LCIwIl0sWzIsMSwiXFxmcmFjIDF7bV97MzJ9fStcXGZyYWMgMXttX3syMX19Il0sWzEsMCwiMS9tX3s0MX0iXSxbNCwzLCIxL21fezE0fSJdLFszLDIsIlxcZnJhYyAxe21fezIzfX0rXFxmcmFjIDF7bV97MTJ9fSJdLFswLDFdLFsxLDJdLFswLDNdLFszLDRdLFszLDVdLFsxLDVdLFs2LDAsIiIsMCx7ImN1cnZlIjoxfV0sWzYsMCwiIiwwLHsiY3VydmUiOi0xfV0sWzYsMSwiIiwwLHsiY3VydmUiOjF9XSxbNiwyLCIiLDAseyJjdXJ2ZSI6MX1dLFs2LDMsIiIsMCx7ImN1cnZlIjotMX1dLFs2LDQsIiIsMix7ImN1cnZlIjotMX1dLFs1LDddLFs0LDhdLFs2LDgsIiIsMCx7ImN1cnZlIjotMn1dLFs2LDksIiIsMCx7ImN1cnZlIjoyfV0sWzIsMTBdLFsyLDldLFs1LDEwXSxbNCw3XV0=
\begin{tikzcd}[scale cd=0.8]
	& {\frac 1{m_{41}}} \\
	& {\frac{1}{m_{31}}} & {\frac 1{m_{32}}+\frac 1{m_{21}}} \\
	& {\frac 1{m_{21}}} & {{\frac1{m_{22}}+\frac1{m_{11}}}} & {\frac 1{m_{23}}+\frac 1{m_{12}}} \\
	& {\frac{1}{m_{11}}} & {\frac 1{m_{12}}} & {\frac 1{m_{13}}} & {\frac{1}{m_{14}}} \\
	0
	\arrow[from=2-2, to=1-2]
	\arrow[from=2-2, to=2-3]
	\arrow[from=3-2, to=2-2]
	\arrow[from=3-2, to=3-3]
	\arrow[from=3-3, to=2-3]
	\arrow[from=3-3, to=3-4]
	\arrow[from=4-2, to=3-2]
	\arrow[from=4-2, to=4-3]
	\arrow[from=4-3, to=3-3]
	\arrow[from=4-3, to=4-4]
	\arrow[from=4-4, to=3-4]
	\arrow[from=4-4, to=4-5]
	\arrow[curve={height=-12pt}, from=5-1, to=1-2]
	\arrow[curve={height=-6pt}, from=5-1, to=2-2]
	\arrow[curve={height=-6pt}, from=5-1, to=3-2]
	\arrow[curve={height=6pt}, from=5-1, to=4-2]
	\arrow[curve={height=-6pt}, from=5-1, to=4-2]
	\arrow[curve={height=6pt}, from=5-1, to=4-3]
	\arrow[curve={height=6pt}, from=5-1, to=4-4]
	\arrow[curve={height=12pt}, from=5-1, to=4-5]
\end{tikzcd}\]
\caption{On the left, a segment of $Q_{\infty}$ and on the right the vertex labelling of the quiver $Q_{n+1}$ for $n=4$ from Definition~\ref{d:vij}.\label{f:QinfQ5}}
\end{figure}

We can now describe the totally positive Toeplitz matrices in terms of standard coordinates as follows.

\begin{prop}{{\cite[Proposition A.1 and Corollary~7.6]{rietschToeplitz}}}\label{p:ToeplitzEquiv}  The (finite or infinite) totally positive unipotent upper-triangular matrix $u=u((m_{ij}))$ is \textit{Toeplitz} if and only if its associated quiver labelling of $Q_{n+1}$ (resp. $Q_{\infty}$) from Definition~\ref{d:vij} is divergence-free. The product of the labels of the arrows $a_{i,k+1-i},a'_{i,k+1-i}$ into $(i,k+1-i)$  recovers the $i$\textsuperscript{th} parameter $q_i$ from \eqref{e:q-paramToeplitz} for the truncation  $u^{[k+1]}\in\ToeplitzU_{k+1}(\R_{>0})$ 
of $u$.  
\end{prop}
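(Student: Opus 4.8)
\emph{Reductions.} I would work entirely in the standard chart of \cref{s:StandardCoord}. That chart has an explicit algebraic inverse expressing each $m_{pq}$ as a ratio of minors of $u$ (see \cref{e:mviaminors}); consequently a vertex labelling $(v_{ij})$ of $Q_{n+1}$ that is strictly increasing in the lattice direction $(1,1)$ corresponds, via $1/m_{ij}=v_{ij}-v_{i-1,j-1}$ with $v_{00}:=0$, to a unique $u\in U_+(\R_{>0})$, and conversely. An $(n+1)\times(n+1)$ unipotent upper triangular matrix is Toeplitz exactly when the $\binom n2$ entry equalities
\[
T_{(i,j)}\colon\qquad u_{i,\,i+j}=u_{i+1,\,i+j+1}
\]
hold, one for each interior vertex $(i,j)$ of $Q_{n+1}$ (those with $i,j\ge 1$ and $i+j\le n$), since chaining these along a superdiagonal forces constancy there; and by \cref{d:divergence-free} being divergence-free imposes exactly one equation $D_{(i,j)}$ at each interior vertex. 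Finally, every interior vertex of $Q_\infty$ is interior in $Q_{n+1}$ for $n\gg 0$ and $D_{(i,j)}$ involves only the coordinates of the truncation $u^{[i+j+1]}$; so the infinite assertion follows from the finite ones applied to all truncations, and I take $n$ finite.

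\emph{The equivalence.} Fix a linear order $\prec$ on the interior vertices refining ``increasing $i+j$, then increasing $i$''. The crux is: on $\{D_{(i',j')}=0:(i',j')\prec(i,j)\}\subseteq U_+(\R_{>0})$ the equations $D_{(i,j)}$ and $T_{(i,j)}$ are equivalent. Granting this, an induction along $\prec$ shows that this locus equals $\{T_{(i',j')}=0:(i',j')\prec(i,j)\}$ for every $(i,j)$, and hence that the systems $\{D_{(i,j)}\}$ and $\{T_{(i,j)}\}$ cut out the same subset of $U_+(\R_{>0})$: $u$ is Toeplitz iff its labelling is divergence-free. (The ``$\Leftarrow$'' direction also follows more softly once one notes that the divergence-free equations are ``triangular'' --- solved antidiagonal by antidiagonal for the coordinates on the next one --- so the divergence-free locus is rationally parametrised by $(m_{1j})_{j=1}^n$, hence irreducible of dimension $n$; it then equals the $n$-dimensional irreducible Toeplitz locus it contains by the ``$\Rightarrow$'' direction.)

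\emph{The crux, and the ``moreover''.} To prove the crux I would substitute $1/m_{pq}=v_{pq}-v_{p-1,q-1}$ and then the minor formula (equivalently, the explicit formula for the entries of $u((m_{pq}))$) into $D_{(i,j)}$, using that $D_{(i,j)}$ only involves $u^{[i+j+1]}$. Clearing denominators and invoking the already-established relations $T_{(i',j')}$, $(i',j')\prec(i,j)$ --- which make $u^{[i+j]}$ Toeplitz and pin down the last column of $u^{[i+j+1]}$ except for the entry appearing in $T_{(i,j)}$ --- one checks that $D_{(i,j)}$ collapses to the single equality $T_{(i,j)}$; the source-adjacent vertices ($i=1$ or $j=1$), at which an incident arrow carries label $v=1/m$, are the same computation with $v_{00}=0$. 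Reading that computation on a divergence-free (equivalently Toeplitz) $u$ yields the ``moreover'': the product of the incoming arrow labels at the vertex $(i,k+1-i)$ of the antidiagonal $\{i+j=k+1\}$ evaluates to $\Delta_{i+1}(u^{[k+1]})\,\Delta_{i-1}(u^{[k+1]})/\Delta_i(u^{[k+1]})^{2}$ with $\Delta_0=\Delta_{k+1}:=1$ (the $\Delta_\ell$ being the minors of \cref{t:IntroRfin}), and by \eqref{e:di}--\eqref{e:q-paramToeplitz} this is exactly the quantum parameter $q_i$ of $u^{[k+1]}$; running $i$ over $1,\dots,k$ recovers $\mathbf q(u^{[k+1]})$.

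\emph{Main obstacle.} The entire burden is the crux: identifying the right minor (or lattice-path) expression for the labels $v_{ij}$, and then verifying that after clearing denominators $D_{(i,j)}$ reduces, modulo the lower Toeplitz relations, to precisely the one new entry equality --- together with the bookkeeping for the source-adjacent vertices and the index matching with $\mathbf q$. The reason it works, which I would aim to make transparent, is that the multiplicative ``divergence'' of this labelling at a vertex records, vertex by vertex, the failure of the corresponding Toeplitz relation; the induction promotes this to the global equivalence and, along the way, to the identification of the antidiagonal products with the quantum parameters of the truncations.
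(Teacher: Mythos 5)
Your strategy---an induction over the interior vertices of $Q_{n+1}$, ordered antidiagonal by antidiagonal, with the claim that the divergence-free equation $D_{(i,j)}$ at a vertex is equivalent modulo the lower ones to the single entry identity $T_{(i,j)}\colon u_{i,i+j}=u_{i+1,i+j+1}$---is sound and close in spirit to the cited proof. The reduction of the infinite case to truncations, and the identification of the incoming products along the $k$-th diagonal with $q_i=\Delta_{i+1}\Delta_{i-1}/\Delta_i^2$ for the rank-$k$ truncation, are also set up correctly.

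The gap is precisely where you flag it: the ``crux'' is asserted, not proved, and it is not a routine clearing of denominators. Modulo the lower Toeplitz relations the entries $u_{i,i+j}$ and $u_{i+1,i+j+1}$ are polynomials of rapidly growing complexity in the $m_{pq}$, and the collapse of $D_{(i,j)}$ to $T_{(i,j)}$ does not simplify visibly without a structural tool. What makes the reference's argument go through is the Desnanot--Jacobi (Dodgson condensation) identity, used to show that $u$ is Toeplitz if and only if the minor identities
\[
\bigl(\Minor^{[i]}_{[i]+j}\bigr)^2=\Minor^{[i]}_{[i]+j-1}\,\Minor^{[i]}_{[i]+j+1}+\Minor^{[i+1]}_{[i+1]+j}\,\Minor^{[i-1]}_{[i-1]+j}
\]
hold, together with explicit telescopic formulas for the horizontal and vertical arrow labels $v_{i,j}-v_{i,j-1}$ and $v_{i,j}-v_{i-1,j}$ as monomial ratios of the $m_{pq}$. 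The induction then splits into a purely combinatorial step (the telescopic arrow-label formulas $\Leftrightarrow$ divergence-free) and an algebraic step (the telescopic formulas $\Leftrightarrow$ the minor identities, via Desnanot--Jacobi). To close your argument you would need to either supply this minor-identity characterisation of the Toeplitz condition and translate $D_{(i,j)}$ into it using the minor expression for $m_{ij}$, or else work out the local computation at a general vertex carefully enough that the Dodgson condensation identity emerges; as written, the inductive step is the missing half of the proof.
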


\begin{remark}\label{r:GiventalComparison} The quiver $Q_{n+1}$ is essentially the planar dual of a quiver introduced by Givental \cite{Givental:QToda}, and this proposition relates to a different construction of upper-triangular and Toeplitz matrices \cite{rietsch} via that quiver together with a conjecture about it proved recently by L\"udenbach  \cite[Theorem~4.3.1]{Ludenbach}. See also Section~\ref{s:ChernRoots}.  However, the connection between the construction of Toeplitz matrices in Proposition~\ref{p:ToeplitzEquiv} and the earlier one is subtle in that our matrix $u((m_{ij}))\in U^{(n+1)}_+$ associated to $Q_{n+1}$, and the alternatively constructed upper-triangular matrix from \cite{rietsch} related to Givental's quiver  
do not  agree, \textit{except} for when $u((m_{ij})_{ij})$ is Toeplitz.  L\"udenbach's theorem is involved, and makes use of the chamber ansatz \cite{FZ:DoubleBruhat} in its proof. The proof of Proposition~\ref{p:ToeplitzEquiv} in \cite[Appendix~A]{rietschToeplitz} is direct and not connected to these earlier~constructions. 
\end{remark}

\subsection{Quantum cohomology of flag varieties}
\label{s:QCoh}
Consider the variety  of flags of linear subspaces of $\C^{n+1}$,
\[
\mathcal Fl_{n+1}=\{V_\bullet=(V_k)_k\mid \{0\}\subset V_1\subset  V_2\subset\cdots  \subset V_{n}\subset \C^{n+1}\}.
\]
We refer to \cite{Brion,AF:book} for relevant background.  The small quantum cohomology ring of $\mathcal Fl_{n+1}$ was first described in \cite{GiventalKim:TodaFl,C-F:QCohFl}. 
In practical terms, it is a deformed ring structure on $H^*(\mathcal Fl_{n+1},\C)\otimes \C[q_1,\dotsc,q_{n}]$ that has positive (enumerative geometric) structure constants with respect to the basis $\{\sigma^w\mid w\in S_{n+1}\}$ of Schubert classes, given by $3$-point genus $0$ Gromov-Witten invariants. As a ring it is moreover generated by the Chern classes $c_1(V_{k}/V_{k-1})$ of the tautological line bundles, and there is a generalised Borel presentation $\C[x_1,\dotsc, x_{n+1};q_1,\dotsc,q_n]/(E_1^q,\dotsc, E_{n+1}^q)$ involving $q$-deformed elementary symmetric polynomials, where $x_k=-c_1(V_{k}/V_{k-1})$. The degree~$2$ Schubert classes are first Chern classes of the vector bundles $ V^*_{k}$, namely $\sigma^{s_k}=x_1+\dotsc+ x_k$.
We also recall that  \cite{FGP} gives stable polynomial representatives for all the Schubert classes in the quantum cohomology ring that specialise to Schubert polynomials \cite{LasSch:Schubert} for $q_1,\dotsc, q_n=0$.

\subsubsection{Peterson theory}\label{s:QCoh}The finite Toeplitz matrices $\Toeplitz_{n+1}(\C)$ were related to the quantum cohomology rings of partial flag varieties by Dale Peterson \cite{peterson,rietsch,Kos:QCoh}, with the open stratum (where all $\Delta_i\ne 0$) relating to the full flag variety. Namely,
\begin{eqnarray}\label{e:Petersoniso}
qH^*(\mathcal Fl_{n+1},\C)[q_i\inv]&\overset{\sim}{\longrightarrow} &\C[\ToeplitzU_{n+1}(\C)][\Delta_i\inv]
\\
\sigma^{s_k}& \mapsto & \mathfrak S^{s_k}:=\Minor^{[k-1]\cup\{k+1\}}_{[k]+n-k+1}/\Delta_k,\notag
\end{eqnarray}
see \cite[Theorem~37]{Kos:QCoh}, and \eqref{e:mviaminors} for notation. Moreover, the whole quantum Schubert basis $\{\sigma^w\mid w\in S_{n+1}\}$ translates to a basis  of functions $\{\mathfrak S^w\mid w\in S_{n+1}\}$ (over $\C[\Delta_1^{\pm 1},\dotsc, \Delta_n^{\pm 1}]$) that takes  positive values on $\ToeplitzU_{n+1}(\R_{>0})$,
 see \cite{rietschJAMS}, and the quantum parameters themselves are given by the same formula as the components $q_k$ of the parametrisation map $\mathbf q$ from  \eqref{e:q-paramToeplitz} \cite[Corollary~28]{Kos:QCoh}. Though we will not use it here, we remark that Peterson also gave a related isomorphism to the homology of the loop group of $SU(n+1)$ and the above constructions extend to general $G/P$, see for example \cite{Gin:GS,peterson,Kos:QCoh,LamShimozono:2010,YunZhu:Loop,Chow:PLSTheorem,LamRietsch}.
 
 \subsubsection{Mirror symmetry}\label{s:MirrorSymm}
Another perspective on $qH^*(\mathcal Fl_{n+1},\C)[q_1\inv,\dotsc,q_{n}\inv]$ is as Jacobi ring of a function called the `superpotential' $\FFlie$ of the flag variety. This function was  constructed in Lie theoretic terms in \cite{rietsch} extending a Laurent polynomial formula from \cite{Givental:QToda}, and it leads to a  description of Toeplitz matrices 
as critical points. The same function as the Lie-theoretic superpotential was already constructed by Berenstein and Kazhdan in a representation-theoretic context, see \cite{BeKa}, where it arose as the `decoration' of a geometric crystal. There is much extended work including recent proofs of key mirror symmetry predictions for $G/P$, some using the connection with \cite{BeKa}, for example  \cite{Chhaibi:Thesis,Chow:Flag,Chow:FlagDmirror,Chow:Gamma,
LamTemplier,MR,RW}.

The superpotential $\FFlie$ is defined on $Z_{\Lie}=B_+\cap U_- T\bar w_0 U_-$ (working over $\C$), where $U_-$ is the lower-triangular analogue of $U_+$ and $B_+=TU_+$, and where $\bar w_0=((-1)^i\delta_{i,n+2-j})_{i,j}$ represents the longest element $w_0\in S_{n+1}$. Namely,  
\begin{equation}\label{e:FFlie}
\FFlie(b=u_Ld\bar w_0u_R)=\sum_{k=1}^n f_k^*(u_L)+\sum_{k=1}^n f_{k}^*(u_R)
\end{equation}
in terms of the homomorphism $f_k^*:U_-\to(\C,+)$  corresponding to the negative Chevalley generators $f_k$. If $d\in T$ is fixed, then the critical points of  $
\mathcal F^{(d)}_{\Lie}:B_+\cap U_- d w_0 U_-\longrightarrow\C
$ 
are precisely the upper-triangular Toeplitz matrices with fixed minors $\Delta_i$ determined by $d$ as in  \eqref{e:di}, see~\cite{rietsch}. 
This superpotential should be considered as a mirror-symmetric representation of a particular anticanonical divisor of the Langlands dual $\mathcal Fl_{n+1}$, which is made up of $n$ Schubert divisors 
and $n$ opposite Schubert divisors. 
Correspondingly, we have the natural splitting of  $\FFlie$  into $2n$ summands seen in \eqref{e:FFlie}, and  we think of the individual summands as in bijection with  these $2n$ irreducible divisors from above. Indeed, they should have an interpretation as GHKK theta functions  \cite{GHKK} associated to these divisors in the sense of \cite{RW,BossingerCMN24,Johnston}. Concretely, we have for the restrictions of the summands of $\FFlie$ to the Toeplitz part of $Z_{\Lie}$ that they satisfy $\mathfrak S^{s_k}(b)=f_{k}^*(u_L)=f_{n-k+1}^*(u_R)$. Thus  $\FFlie|_{\ToeplitzU_{n+1}}=2\sum_k\mathfrak S^{s_k}$ represents the anticanonical class in $qH^*(\mathcal Fl_{n+1})[q_i\inv]$. This observation in its earliest form is found in \cite[Corollary~2]{Givental:QToda}, and is proved in increasing generality in \cite{LRYZ,Chow:FlagDmirror}.

\subsection{Asymptotics of quantum parameters} \label{s:RealAsymptotic}

We note that it is very difficult to ever write down the associated totally positive Toeplitz matrix $u$ given some parameter values $(q_1,\dotsc, q_n)$. The matrix $u$ is simply shown to exist, either via a Perron-Frobenius eigenvector argument \cite{rietschJAMS}, or as positive critical point of the superpotential  \cite{rietsch}. This is very much in contrast to the Edrei theorem, which readily associates a totally nonnegative Toeplitz matrix to any choice   of Schoenberg parameters. Namely, via the map 
\begin{eqnarray}\label{e:Cpower}
\mathcal E:\Omega_{S}\times\R_{\ge 0} &\longrightarrow &\ToeplitzU_{\infty}(\R_{\ge 0})\\
\quad ((\boldalpha,\boldbeta),\gamma)& \mapsto &\qquad  u(\mathbf c),\notag
\end{eqnarray}
where $u(\mathbf c)$ is given in \eqref{e:SchoenbergFormula}.
To nevertheless relate the two, 
we consider open dense subsets of $\ToeplitzU_\infty(\R_{\ge 0})$, 
\begin{equation}
\ToeplitzU^{\circ\circ}_\infty(\R_{>0})\subset\ToeplitzU^\circ_\infty(\R_{>0})\subset\ToeplitzU_\infty(\R_{>0})\subset \ToeplitzU_\infty(\R_{\ge 0}),
\end{equation}
where 
$\ToeplitzU_\infty(\R_{>0})$ is the totally \textit{positive} subset of $\ToeplitzU_\infty(\R_{\ge 0})$, with all nontrivial minors positive, next  
$\ToeplitzU^\circ_\infty(\R_{>0})$ is  the subset for which all Schoenberg parameters $\alpha_i,\beta_j$ are nonzero, and $\ToeplitzU^{\circ\circ}_\infty(\R_{>0})$ where they are moreover distinct, $\alpha_i>\alpha_{i+1}$ and $\beta_j>\beta_{j+1}$. In other words, $u(\mathbf c)$ lies in $\ToeplitzU^\circ_\infty(\R_{>0})$ if and only if the generating function $1+\sum_{k=1}^\infty c_k x^k$ has infinitely many roots and poles, and  in $\ToeplitzU^{\circ\circ}_\infty(\R_{>0})$ if these roots and poles are additionally all simple.

\begin{theorem}\label{t:IntroVKtyp} Suppose $(u^{(n+1)})_n$  is a sequence of (finite) totally positive Toeplitz matrices  $u^{(n+1)}\in  \ToeplitzU_{n+1}(\R_{>0})$ that converges uniformly to an element $u(\mathbf c)\in\ToeplitzU^\circ_{\infty}(\R_{>0})$, and let 
%Now denote by  the finite the projections of $u(\mathbf c)$, and 
$(d_1^{(n+1)},\dotsc, d^{(n+1)}_{n+1})$ be the positive  parameters associated to $u^{(n+1)}$ in Theorem~\ref{t:IntroRfin}. Then 
  \begin{equation}\label{e:VKtyp}
\lim_{n\to\infty}\sqrt[n] {d_i^{(n+1)}}=\alpha_i\qquad\text{and}\qquad
\lim_{n\to\infty}{\sqrt[n] {d_{n+2-j}^{(n+1)}}}=\beta_j\inv.
\end{equation}
\end{theorem}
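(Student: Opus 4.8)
The plan is to reduce the statement to the asymptotics of the ``corner'' Toeplitz minors $\Delta_r(u^{(n+1)})$ — the minor on rows $[1,r]$ and columns $[n+2-r,n+1]$, which is the top-right $r\times r$ block and also the Toeplitz minor attached to the rectangular partition $((n+1-r)^r)$, so that $\Delta_r(u)=\det\bigl(c_{(n+1-r)-a+b}\bigr)_{a,b=1}^r$ in terms of the entries $\mathbf c$ of $u$. By \cref{t:IntroRfin} we have, uniformly in $1\le i\le n+1$, $d_i^{(n+1)}=\Delta_i(u^{(n+1)})/\Delta_{i-1}(u^{(n+1)})$ with the conventions $\Delta_0:=1$ and $\Delta_{n+1}:=\det u^{(n+1)}=1$. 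Hence \eqref{e:VKtyp} follows once we prove, for each fixed $r\ge 0$,
\[
\lim_{n\to\infty}\Delta_r(u^{(n+1)})^{1/n}=\alpha_1\cdots\alpha_r
\qquad\text{and}\qquad
\lim_{n\to\infty}\Delta_{n+1-r}(u^{(n+1)})^{1/n}=\beta_1\cdots\beta_r
\]
(empty products equal to $1$), and then divide consecutive relations.

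The second family reduces to the first. Since $\det u^{(n+1)}=1$, Jacobi's identity for complementary minors of the inverse gives $\Delta_{n+1-r}(u^{(n+1)})=\pm\Delta_r\bigl((u^{(n+1)})\inv\bigr)$, and $(u^{(n+1)})\inv$ is again a unipotent upper-triangular Toeplitz matrix, whose generating polynomial is the degree-$\le n$ truncation of $1/E(x)$, where $E(x)=e^{\gamma x}\prod_i(1+\beta_i x)/\prod_i(1-\alpha_i x)$ is the generating function \eqref{e:SchoenbergFormula} of $u(\mathbf c)$. The function $1/E$ is meromorphic on $\C$ with poles exactly at $\{-1/\beta_i:\beta_i\ne 0\}$, so the inverse matrices converge, entry by entry, to the infinite Toeplitz matrix with generating function $1/E$; applying the first family with the roles of $\boldalpha$ and $\boldbeta$ interchanged (and using $\Delta_{n+1-r}(u^{(n+1)})>0$ to remove the sign) yields the second.

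It remains to handle the first family, and its heart is the corresponding statement for the limit. Writing $T_r(N):=\det(c_{N-a+b})_{a,b=1}^r$ for the Taylor coefficients $c_m$ of $E$, one has $\Delta_r(u(\mathbf c)^{[n+1]})=T_r(n+1-r)$, and the claim is $\lim_{N\to\infty}T_r(N)^{1/N}=\alpha_1\cdots\alpha_r$. This I would prove by singularity analysis: because $u(\mathbf c)\in\ToeplitzU^\circ_\infty$, all $\alpha_i,\beta_i$ are nonzero, so $E$ is meromorphic on all of $\C$ with infinitely many poles, all of the form $1/\alpha_i$ and accumulating only at $\infty$; moving the contour in $c_m=\frac{1}{2\pi i}\oint E(x)\,x^{-m-1}\,dx$ outward just past the poles at $1/\alpha_1,\dots,1/\alpha_r$ (with multiplicity) gives $c_m=P_r(m)+O(\varrho^m)$ with $\varrho<\alpha_r$, where $P_r(m)=\sum(\text{polynomial in }m)\cdot\alpha_i^m$ collects the corresponding residues. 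Substituted into $T_r(N)$, the leading term $\det(P_r(N-a+b))_{a,b=1}^r$ is a confluent-Vandermonde-type determinant, equal to $C_r\,(\alpha_1\cdots\alpha_r)^N N^{d}(1+o(1))$ with $C_r\ne 0$ — indeed $C_r>0$, since $T_r(N)>0$ by total positivity — while the error terms must be controlled \emph{not} by the crude multilinear estimate (too weak already for $r\ge 3$) but by combining $|\epsilon_m|=O(\varrho^m)$ with the low-rank structure of $(P_r(N-a+b))$, whose $(r-k)\times(r-k)$ minors are $O\bigl((\alpha_1\cdots\alpha_{r-k})^N N^{d'}\bigr)$; this makes every error contribution $o(T_r(N))$ because $\varrho^{-k}<\alpha_{r-k+1}\cdots\alpha_r$. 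Hence $T_r(N)^{1/N}\to\alpha_1\cdots\alpha_r$.

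Finally I would transfer this to the matrices $u^{(n+1)}$ of the hypothesis. Their entries $c_m^{(n)}$ converge to $c_m$; the delicate point is that entrywise convergence alone does not control the ``boundary'' entries $c_m^{(n)}$ with $m$ of order $n$, which is exactly where the uniform-convergence hypothesis — together with total positivity of $u^{(n+1)}$, which forces these entries into the correct exponential scale — must enter. Granting such control, write $c_m^{(n)}=P_r(m)+\epsilon_m+\eta_m^{(n)}$ and run the same determinant expansion, bounding the extra $\eta^{(n)}$-contributions by the same low-rank minor estimate; this gives $\Delta_r(u^{(n+1)})=T_r(n+1-r)(1+o(1))$ and hence $\Delta_r(u^{(n+1)})^{1/n}\to\alpha_1\cdots\alpha_r$, whence $(d_i^{(n+1)})^{1/n}\to\alpha_i$ and $(d_{n+2-j}^{(n+1)})^{1/n}\to\beta_j\inv$. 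I expect the main obstacle to be precisely this combination — establishing the Toeplitz-determinant asymptotics with error terms sharp enough to survive the passage from $u(\mathbf c)$ to $u^{(n+1)}$ — since the naive bounds on the off-diagonal parts of the determinant are far too lossy and the entire argument hinges on exploiting the low rank of the dominant part $P_r$.
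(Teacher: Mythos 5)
Your proposal takes a genuinely different route from the paper, and there are two real gaps in it.

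The paper's proof is much more direct and does not go through singularity analysis of the generating function at all. It uses the standard coordinate chart $(m_{ij})$ on $U_+$ and the key fact, due to Edrei (\cref{p:Edreilimits}, also quoted just before the proof), that for an infinite totally positive Toeplitz matrix the minor expressions $m_{ij}$ of \eqref{e:mviaminors} converge separately in $i$ and $j$ to the Schoenberg parameters: $\lim_{k\to\infty}m_{ik}=\alpha_i$, $\lim_{k\to\infty}m_{kj}=\beta_j$. In these coordinates the corner minor $\Delta_i$ is a \emph{Laurent monomial}, $\Delta_i(u)=\prod_{k=1}^{i}\prod_{r=1}^{n-i+1}m_{kr}$, so the logarithm of $d_i^{[n+1]}$ is a signed sum of $n$ of the quantities $\mu_{kr}=\ln m_{kr}$. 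The limit $\tfrac1n\ln d_i^{[n+1]}\to\ln\alpha_i$ then follows from a short Cesàro-type averaging estimate, and the passage from the truncations $u^{[n+1]}$ to a general uniformly convergent sequence $u^{(n+1)}$ is a one-line perturbation of the same sum. The $\beta$-case follows by the symmetry $\mu_{ij}\leftrightarrow\mu_{ji}$. Nothing about poles, contours, or determinant asymptotics enters. By contrast you work directly with the Toeplitz determinants $\Delta_r=\det(c_{(n+1-r)-a+b})$, extract the Taylor-coefficient asymptotics of $E(x)$ by contour shifting past the first $r$ poles, and feed this into a confluent-Vandermonde-plus-low-rank determinant estimate. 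That is the classical route; it proves more refined asymptotics of the $\Delta_r$ themselves, but at substantial cost.

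Two concrete gaps. First, your contour-shift estimate $c_m=P_r(m)+O(\varrho^m)$ with $\varrho<\alpha_r$ requires a strict gap $\alpha_{r+1}<\alpha_r$, but the hypothesis is only $u(\mathbf c)\in\ToeplitzU^\circ_\infty(\R_{>0})$ (all $\alpha_i,\beta_j$ nonzero), not $\ToeplitzU^{\circ\circ}_\infty$ (distinct). When $\alpha_r=\alpha_{r+1}=\cdots$ the contour cannot separate $1/\alpha_r$ from $1/\alpha_{r+1}$, and you would have to regroup the poles by value and rework the confluent Vandermonde and its sign; the proposal does not do this, and the paper avoids the issue entirely since the Cesàro argument uses only $\mu_{ik}\to\ln\alpha_i$ with no gap condition. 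Second, and more seriously, you explicitly write ``Granting such control'' for the transfer from the truncations of $u(\mathbf c)$ to a general sequence $u^{(n+1)}$: this is exactly the step where uniform convergence must be converted into control over entries $c^{(n)}_m$ at scale $m\asymp n$, and you correctly identify it as delicate but leave it unproved. In the paper this step is painless precisely because the object being averaged is $\ln m_{ij}$ rather than a determinant: uniform convergence in logarithmic standard coordinates gives $|\mu^{(n+1)}_{hk}-\mu_{hk}|<\varepsilon$ for all $h+k\le n+1$, which perturbs each $\delta_i^{(n+1)}$ by at most $n\varepsilon$. So the proposal as written is a plausible program but not a complete proof of the statement as stated, whereas the paper's proof closes both gaps by choosing coordinates in which $\Delta_i$ monomialises.
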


Here for $(u^{(n+1)})_{n\in\N}$ to converge uniformly to $u(\mathbf c)$ means that for every $\ep>0$ there is an $N$ such that $u^{(n+1)}$ lies in the $\ep$-neighborhood of the truncation $u^{[n+1]}$ of $u(\mathbf c)$ in $\ToeplitzU_{n+1}(\R)\cong \R^n$ whenever $n>N$.

\begin{remark}\label{r:AsymptoticsReal}
Note that the positive diagonal matrix $\Delta(u^{(n+1)})$ has no particular structure other than its entries  multiplying to $1$. However, after taking the normalised limit coordinate-wise starting from the first diagonal entry $d_1$ it becomes `dominant', in the sense that the limit diagonal entries form a weakly decreasing sequence (namely given by $(\alpha_i)_i$). And if instead we start from the last entry, $d_{n+1}$, then the result is `anti-dominant', $(\beta_j\inv)_j$, consistent with the idea of this `reversal' relating highest to lowest weight in the finite setting. Though, again, in the finite case before taking the limits, there was no such structure visible. 
In terms of the quantum parameters, Theorem~\ref{t:IntroVKtyp}  implies the relationship
\begin{equation}\label{e:limqi}
\lim_{n\to\infty}\sqrt[n]{q_i^{(n+1)}}=\frac{\alpha_{i+1}}{\alpha_i},\qquad
\lim_{n\to\infty}\sqrt[n]{q_{n+1-j}^{(n+1)}}=\frac{\beta_{j+1}}{\beta_{j}}
\end{equation}
with the Schoenberg parameters of $u$. Thus we obtain from the (finite) parameter sequences $(q_1,\dotsc, q_n)$ of the $u^{(n+1)}$ two infinite sequences with limit $0$, namely $\left(\frac{\alpha_2}{\alpha_1},\frac{\alpha_3}{\alpha_2},\dotsc\right)$ and $\left(\frac{\beta_2}{\beta_1},\frac{\beta_3}{\beta_2},\dotsc \right)$.   \end{remark}

Let us now prepare to prove Theorem~\ref{t:IntroVKtyp}. Let $(m_{ij})_{i,j}$ be the standard coordinates for $u\in U^{(\infty)}_+(\R_{>0})$. We have the following convenient formula for the $m_{ij}$ in terms of minors of $u$,
\begin{equation}\label{e:mviaminors}
m_{ij}=\frac{\Minor_{[i]+j}^{[i]}(u)\Minor_{[i-1]+(j-1)}^{[i-1]}(u)}{\Minor^{[i]}_{[i]+(j-1)}(u)\Minor_{[i-1]+j}^{[i-1]}(u)},
\end{equation}
where $[i]+j$ refers to the set $[1+j,i+j]$, and $\Minor^J_K$ is the minor with row set $J$ and column set $I$, \cite[(7.1)]{rietschToeplitz}. These minors if $u$ is Toeplitz appear in the following result of Edrei's, that is part of a full proof of Theorem~\ref{t:EdreiIntro} given in  \cite{Edrei53} as an alternative to  \cite{ASW,Edrei52}.

\begin{prop}[{\cite[Assertion iii]{Edrei53}}]\label{p:Edreilimits} If $u\in\ToeplitzU_\infty(\R_{>0})$, then the (Toeplitz) minors from \eqref{e:mviaminors} converge separately in~$i$ and~$j$ recovering the Schoenberg parameters $\alpha_i$ ($j\to\infty)$ and $\beta_j$ ($i\to\infty$) of $u$. 
\end{prop}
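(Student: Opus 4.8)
The plan is to rewrite the ratio in \eqref{e:mviaminors} in terms of rectangular Toeplitz determinants, recognise those as specialised Schur functions, and then compute their asymptotics. Let $\mathbf c=(c_k)$ be the defining sequence of $u$ and let $(\boldalpha,\boldbeta,\gamma)$ be its Edrei data from \cref{t:EdreiIntro}, so $\sum_k c_kx^k=e^{\gamma x}\prod_k\frac{1+\beta_kx}{1-\alpha_kx}$. The minor $\Minor^{[i]}_{[i]+j}(u)$ is exactly the Toeplitz minor $\Delta_{(j^i)}(u)=\det(c_{j-a+b})_{a,b=1}^i$ of the $i\times j$ rectangle; abbreviate it $D_i(j)$, with $D_0(j)=1$. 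By the Jacobi--Trudi identity $D_i(j)=s_{(j^i)}(\phi)$, the Schur function of the rectangle evaluated at the specialisation $\phi$ of the ring of symmetric functions determined by $\phi(h_k)=c_k$ (the Thoma specialisation attached to $(\boldalpha,\boldbeta,\gamma)$). Writing $r_i(j):=D_i(j)/D_i(j-1)$, formula \eqref{e:mviaminors} reads $m_{ij}=r_i(j)/r_{i-1}(j)$, so it suffices to show that for each fixed $i$ the sequence $r_i(j)$ converges as $j\to\infty$ to $\alpha_1\alpha_2\cdots\alpha_i$ (an empty product being $1$, a product containing a zero factor being $0$); then $m_{ij}\to\alpha_i$.

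For the convergence of $r_i(j)$ I would use total positivity. Since $u$ is Toeplitz, $\Minor^{[i]+k}_{[i]+j}(u)=D_i(j-k)$, so the bi-infinite matrix $\big(D_i(j-k)\big)_{j,k}$ is a submatrix of the $i$-th compound matrix $\wedge^i u$, which is totally nonnegative; in particular $D_i(j)^2\ge D_i(j-1)D_i(j+1)$. As $u$ is totally \emph{positive}, every $D_i(j)>0$, so $r_i(j)$ is positive and non-increasing, hence converges to some $\rho_i\ge 0$, and consequently $\rho_i=\lim_j D_i(j)^{1/j}$, the reciprocal of the radius of convergence of $\sum_j D_i(j)x^j$. (Equivalently: $(D_i(j))_j$ is a P\'olya frequency sequence.)

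It remains to show $\rho_i=\alpha_1\cdots\alpha_i$. The lower bound $D_i(j)\ge(\alpha_1\cdots\alpha_i)^j$ is immediate from the coproduct expansion $s_{(j^i)}(\phi)=\sum_{\mu\subseteq(j^i)}s_\mu(\alpha_1,\dots,\alpha_i)\,s_{(j^i)/\mu}(\phi')$, where $\phi'$ is the complementary (again Schoenberg‑type, hence positive) specialisation: every term is nonnegative and the $\mu=(j^i)$ term equals $s_{(j^i)}(\alpha_1,\dots,\alpha_i)=(\alpha_1\cdots\alpha_i)^j$. For the matching upper bound, assume first $\alpha_i>0$ and iterate the one‑variable branching rule, peeling off $\alpha_1,\dots,\alpha_M$ as successive horizontal‑strip removals from $(j^i)$; the leftover shape is evaluated at a residual specialisation $\phi''$ whose series still converges for $|x|<1/\alpha_{M+1}$. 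Bounding a skew Schur function by the product of the complete homogeneous functions of its row lengths and using $h_k(\phi'')\le C q^{k}$ with $q:=\alpha_{M+1}+\ep$, one checks that once $M$ is large and $\ep$ small enough that $q<\alpha_i$, every one of the at most $(j+1)^{iM}$ peel sequences (only polynomially many, since $(j^i)$ has $i$ rows) contributes at most a constant times $(\alpha_1\cdots\alpha_i)^j$. Hence $D_i(j)\le\mathrm{poly}(j)\,(\alpha_1\cdots\alpha_i)^j$, so $\rho_i=\alpha_1\cdots\alpha_i$ and $m_{ij}\to\alpha_i$. The statement $\lim_{i\to\infty}m_{ij}=\beta_j$ then follows from the transpose involution $\omega$ of symmetric functions: it sends $s_{(j^i)}$ to $s_{(i^j)}$ and sends $\phi$ to the Thoma specialisation of $(\boldbeta,\boldalpha,\gamma)$ while preserving total positivity, so $D_i(j)$ for $u$ equals $D_j(i)$ for the $\omega$‑dual matrix, whence $m_{ij}$ for $u$ equals $m_{ji}$ for that matrix, and the already proven limit gives $\beta_j$.

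The main obstacle is the upper bound on $D_i(j)^{1/j}$, i.e.\ that the long thin rectangular Schur function does not grow faster than $(\alpha_1\cdots\alpha_i)^j$: the crude estimate $\det(c_{j-a+b})\le i!\,(\max_k c_{j-a+b})^i$ only gives $\alpha_1^{ij}$, so one genuinely needs the peeling/counting argument above to absorb the infinitely many small $\alpha_k$ and the entire factor $e^{\gamma x}\prod_k(1+\beta_kx)$ (whose coefficients decay faster than any geometric rate). A secondary difficulty is the degenerate case $\alpha_i=0$, where the lower bound is vacuous and the peeling estimate only yields $\rho_i\le\alpha_1\cdots\alpha_{i-1}$; there one needs the sharper fact that $(D_i(j))_j$ is again a Schoenberg sequence whose largest pole parameter is exactly $\alpha_1\cdots\alpha_i$ (forcing $r_i(j)\to0$ and hence $m_{ij}\to0$), which can be read off from the Desnanot--Jacobi recursion $D_i(j)D_{i-2}(j)=D_{i-1}(j)^2-D_{i-1}(j-1)D_{i-1}(j+1)$ together with the second‑order behaviour of $r_{i-1}(j)$, or simply quoted from \cite[Assertion iii]{Edrei53}.
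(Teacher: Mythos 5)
The paper does not prove this proposition; it is quoted from \cite{Edrei53}, and the paragraph following the statement only indicates Edrei's line of argument (radius of convergence gives $\alpha_1=\lim_{j} c_j/c_{j-1}$, then a recursion together with the $\alpha\leftrightarrow\beta$ involution). Your symmetric-function route --- identifying $\Minor^{[i]}_{[i]+j}(u)$ with the rectangular Schur function $s_{(j^i)}$ at the Thoma specialisation, reducing the claim to $r_i(j):=D_i(j)/D_i(j-1)\to\alpha_1\cdots\alpha_i$ via log-concavity of $D_i$ and a coproduct/branching estimate, and then invoking $\omega$ for the $\beta$-limit --- is genuinely different from Edrei's generating-function recursion and fits well with the supersymmetric Schur function viewpoint used elsewhere in the paper (compare \cref{d:Omegas} and \cref{t:restrtropparamWk}). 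In the non-degenerate case $\alpha_i>0$ your argument can be made rigorous, although the peeling upper bound as written is underspecified: a cleaner version uses a single coproduct split $\phi=(\alpha_1,\dots,\alpha_M)\oplus\phi''$, the Kostka-positivity bound $s_\mu(\phi'')\le\prod_a h_{\mu_a}(\phi'')\le C^iq^{|\mu|}$, the observation that each $\alpha_k$ appears to power at most $j$ in any monomial of $s_{(j^i)/\mu}(\alpha_1,\dots,\alpha_M)$, and the elementary maximisation of $\prod_k\alpha_k^{a_k}q^m$ over $\{a_k\le j,\ \sum a_k+m=ij\}$ once $q<\alpha_i$; the number of terms is then polynomial in $j$.

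The genuine gap is the degenerate case, which the hypothesis $u\in\ToeplitzU_\infty(\R_{>0})$ does not exclude: $\gamma>0$ already makes $u$ totally positive with every $\alpha_i=\beta_j=0$ (the $\sum c_kx^k=e^x$ matrix of \cref{r:infLimQ-paramsEdrei}), and $\boldalpha$ may have only finitely many nonzero entries. When $\alpha_{i-1}=0$ your reduction fails at the very first step: $r_{i-1}(j)$ and $r_i(j)$ both tend to $0$, and $m_{ij}=r_i(j)/r_{i-1}(j)$ is a $0/0$ form whose limit is not determined by the limits $\rho_i,\rho_{i-1}$ of the $r$'s; one needs the asymptotics of the ratio itself, not just of its numerator and denominator. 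Even when $\alpha_{i-1}>0$ but $\alpha_i=0$, the peeling upper bound is unavailable (there is no $q<\alpha_i=0$), so $\rho_i=0$ is not established. You flag this and suggest Desnanot--Jacobi plus the second-order behaviour of $r_{i-1}(j)$, or simply quoting \cite{Edrei53}, but as a self-contained proof this is a real hole --- and it is precisely the case governed by the $e^{\gamma x}$ factor, the genuinely hard part of Edrei's theorem (\cref{r:gamma}), that drives $m_{ij}\to 0$ when $\boldalpha$ terminates.
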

 For example, if $u=u(\mathbf c)$ as in Equation~\ref{e:u}, then the radius of convergence of the generating function $\sum c_ix^i$ is just $\alpha_1\inv$, by Equation~\ref{e:SchoenbergFormula}. Therefore $\alpha_1=\lim_{j\to\infty} \frac{c_{j}}{c_{j-1}}$, which is the first instance of the above proposition. The general statement is then proved recursively, and by using the symmetry that inverts the roles of the $\alpha$'s and $\beta$'s. Related  results for more general patterned matrices have  appeared in \cite{LP:I,LP:II}.
\begin{proof}[{Proof of Theorem~\ref{t:IntroVKtyp}}]
Let $u$ have standard coordinates $(m_{ij})_{i,j\in\N}$. For notational simplicity let us first consider the case where $u^{(n+1)}$ is replaced by the truncation $u^{[n+1]}$  of $u$, so that the standard coordinates are just $(m_{ij})_{i+j\le n+1}$.  We have that 
$\Delta_i(u)=\Minor^{[i]}_{[i]+n-i+1}(u)=
{\prod_{k=1}^{i}\prod_{r=1}^{n-i+1} m_{kr}}$.  Therefore
\begin{equation}\label{e:dn+1}
d^{[n+1]}_{i}=\frac{\Delta_{i}(u)}{\Delta_{i-1}(u)}=\frac{\prod_{k=1}^{i}\prod_{r=1}^{n-i+1}\ m_{kr}}{\prod_{k=1}^{i-1}\prod_{r=1}^{n-i+2} m_{kr}}=(\prod_{r=1}^{n-i+1}\ m_{{i},r})(\prod_{k=1}^{i-1}m_{k,n-i+2})\inv.
\end{equation}
Consider $i$ fixed and let $\mu_{ij}=\ln(m_{ij})$ and $\delta^{[n+1]}_i=\ln(d_i^{[n+1]})$, so that \eqref{e:dn+1} translates to 
\begin{equation}\label{e:deltan+1}
\delta^{[n+1]}_i=\left(\mu_{i1}+\dotsc+\mu_{i,n-i+1}\right)-\left(\mu_{i-1,n-i+2}+\dotsc + \mu_{1,n-i+2}\right),
\end{equation}
with $(n-i+1)$ positive summands, and $(i-1)$ negative summands. Recall that $\alpha_i> 0$. We have that $\lim_{k\to\infty}(\mu_{ik})=\ln(\alpha_i)$ by Proposition~\ref{p:Edreilimits}. 
 Let $\ep>0$ and pick $N_\ep$ such that $|\mu_{hk}-\ln(\alpha_h)|<\ep$ for all $h\in [1,i]$ whenever $k> N_\ep$. Suppose $n$ is large enough so that $n-i+1>N_\ep$. Then we have
\begin{multline*}
|\delta^{[n+1]}_i-n\ln(\alpha_i)|\le 
\\
\begin{array}{l}
\le\left(|\mu_{i1}-\ln(\alpha_i)|+\dotsc+|\mu_{i,n-i+1}-\ln(\alpha_i)|\right)+\left(|\mu_{1,n-i+2}|+\dotsc + |\mu_{i-1,n-i+2}|\right)+(i-1)|\ln(\alpha_i)|\\
\le \sum_{k=1}^{N_\ep}|\mu_{ik}-\ln(\alpha_i)|+ \ep (n-i+1-N_\ep) +\left(\sum _{h=1}^{i-1}(|\mu_{h,n-i+2}-\ln(\alpha_{h})|+|\ln(\alpha_h)|)\right) + (i-1)|\ln(\alpha_i)|\\
\le \sum_{k=1}^{N_\ep}|\mu_{ik}-\ln(\alpha_i)|+ \ep (n-i+1-N_\ep) +\left((i-1)\ep +\sum _{h=1}^{i-1}|\ln(\alpha_h)|\right) + (i-1)|\ln(\alpha_i)|.
\end{array}
\end{multline*}
All of the summands on the right-hand side apart from $\ep(n-i+1-N_\ep)$ are independent of $n$. Thus dividing by $n$ and then taking the limit $n\to\infty$ we find that
\[
\lim_{n\to\infty}\left|\frac{\delta^{[n+1]}_i}n-\ln(\alpha_i)\right|\le \ep.
\]
 We now consider as well as the truncations $u^{[n+1]}$, the more general sequence $u^{(n+1)}$ that is assumed to  converge uniformly to $u$. Consider $\Toeplitz_{n+1}(\R_{>0})$ embedded into $\R^{\mathcal S_{\le n+1}}$ by the logarithmic standard coordinates. Then uniform convergence implies that $|\mu^{(n+1)}_{hk}-\mu_{hk}|<\ep$ for all $h+k\le n+1$ whenever $n>N'_\ep$. It follows that 
\[
\delta^{(n+1)}_i:=\left(\mu^{(n+1)}_{i1}+\dotsc+\mu^{(n+1)}_{i,n-i+1}\right)-\left(\mu^{(n+1)}_{i-1,n-i+2}+\dotsc + \mu^{(n+1)}_{1,n-i+2}\right),
\]
satisfies $|\delta^{(n+1)}_i-\delta^{[n+1]}_i|<n\ep$ for all $i\le n$ whenever $n>N'_\ep$. Therefore
\[
\lim_{n\to\infty}\left|\frac{\delta^{(n+1)}_i}n-\ln(\alpha_i)\right|\le 2\ep.
\]
Since this holds for any $\ep>0$, we obtain $
\lim_{n\to\infty} \frac{\delta^{(n+1)}_i}n=\ln(\alpha_i)
$ and we have proved the first limit formula in \eqref{e:VKtyp}. For the second one, involving $\beta_j$, we observe that the symmetry that sends $\mu_{ij}$ to $\mu_{ji}$ sends $\delta_{i}^{(n+1)} $ to $-\delta_{n-i+2}^{(n+1)}$. This  limit formula then follows using symmetry and that $\lim_{k\to\infty}(\mu_{kj})=\ln(\beta_j)$. 
\end{proof}
\begin{remark}\label{r:infLimQ-paramsEdrei}
 Note that for the infinite Toeplitz matrix with generating function $e^x$, i.e. the infinite Toeplitz matrix $u=u(\mathbf c)$ with entries $c_k=\frac{1}{k!}$, all Schoenberg parameters are equal to zero, and therefore we cannot apply Theorem~\ref{t:IntroVKtyp}. Nevertheless in this case  the formula \eqref{e:VKtyp} still holds, as follows from calculating the  parameters from Theorem~\ref{t:IntroRfin} associated to the truncations $u^{[n+1]}$ of $u$, 
\[\left(d_1^{[n+1]},\dotsc
, d_{n+1}^{[n+1]}\right)=\left(\frac{1}{n!},\frac{1}{(n-1)!},\frac{2}{(n-2)!},\dotsc\dotsc, \frac{(n-2)!}{2},(n-1)!,n!\right ).
\]
Picking up from Remark~\ref{r:AsymptoticsReal} 
let us also consider the normalised limits of the quantum parameters. These turn out to still be well-defined even though all $\alpha_i$ and $\beta_j$ are zero.  Namely, in this case the quantum parameters $\left(q_1^{(n+1)},\dotsc, q_{n}^{(n+1)}\right)$  of the truncation $u^{[n+1]}$ are $(n,(n-1)2,\dotsc, (n-k)k, \dotsc, n)$ and we have $\lim_{n\to\infty}\sqrt[n]{q_{k}^{(n+1)}}=\lim_{n\to\infty}\sqrt[n]{q_{n+1-k}^{(n+1)}}=1$ for all $k$. However, in this case the limit sequences $(1,1,1,\dotsc )$ do not converge to $0$,  in contrast to the $u\in \ToeplitzU^{\circ}_\infty(\R_{>0})$ case from  Remark~\ref{r:AsymptoticsReal}.   
\end{remark}

\subsection{Asymptotics of Schubert classes}\label{s:ChernRoots} 
We recall the explicit Laurent polynomial superpotential of Givental \cite{Givental:QToda} which by \cite{rietsch} can be interpreted as pullback of $\FFlie$ to a torus $\mathbb T_{Giv}$ that embeds into the domain $Z_{\Lie}$. For convenience we  translate Givental's construction to be based on our quiver $Q_{n+1}$, see  Remark~\ref{r:GiventalComparison}, and we will restrict to the totally positive part $\mathbb T_{Giv}(\R_{>0})$ from the outset.

\begin{defn}
Let us consider positive labellings of the arrows $a\in\mathcal A_{n+1}$ of the quiver $Q_{n+1}=(\mathcal V_{n+1},\mathcal A_{n+1})$ 
%from Figure~\ref{f:Qn+1} 
of the following kind,
\[
\mathbb T_{Giv}(\R_{>0}):=\left\{(\ell_a)_{a\in\mathcal A_{n+1}}\left |\ \ell_a\in\R_{>0},\quad \ell_{a_{ij}}\ell_{a'_{ij}}=\ell_{a_{i+1,j}}\ell_{a'_{i,j+1}} \text{ for $i+j< n+1$}
\right.\right\}.
\] 
Next define a function $(q_1,\dotsc, q_n):\mathcal G_{n+1}(\R_{>0})\to \R_{>0}^n$ by $
 q_k((\ell_a)_a):=\ell_{a_{k,n-k+1}}\ell_{a'_{k,n-k+1}}$. After identifying the torus $\mathbb T_{Giv}$ with Givental's one from \cite{Givental:QToda} by using a natural bijection of arrows, the Givental superpotential  for $\mathcal Fl_{n+1}$ is given by taking the sum of the arrow coordinates, so in particular
\begin{eqnarray*}
\mathcal F:\mathbb T_{Giv}(\R_{>0})&\longrightarrow &\ \R_{>0}\\
(\ell_a)_{a\in \mathcal A_{n+1}} &\mapsto & \sum_{a\in\mathcal A_{n+1}}\ell_a.
\end{eqnarray*}
\end{defn}
We have that $\mathcal F$ is the pullback of $\FFlie$ under the embedding $\mathbb T_{Giv}\hookrightarrow Z_{\Lie}$ from \cite{rietsch}. The construction of the embedding is technical and we will not recall it here. 
%The embedding of $\mathbb T_{Giv}$ into $Z_{\Lie}$ is the one defined in \cite{rietsch}, and we don't recall it here. %However, we note that $\mathcal F$ is the pullback of $\FFlie$ to $\mathbb T_{Giv}$ and the embedding is a 
We just note that it is a bijection on positive parts, $\mathbb T_{Giv}(\R_{>0})\cong Z_{\Lie}(\R_{>0})$, and that the decomposition  % of the superpotential $\mathcal F$ 
into the $2n$ summands as in  \eqref{e:FFlie} is as follows. 

\begin{defn}\label{d:Fsummands} For every diagonal vertex  there are two distinguished paths to it from the $0$-vertex, one using only vertical arrows $a_{ij}$, and one using only horizontal arrows $a'_{ij}$. Let $\mathcal F_k$ be the arrow label sum for the vertical  path to $v_{k,n-k+1}$ and  $\mathcal F'_k$ the arrow label sum from the horizontal path to $v_{n-k+1,k}$. We have
\begin{eqnarray}
\mathcal F_k((\ell_{a})_a)&:=&\sum_{i=1}^{k} \ell_{a_{i,n-k+1}}  \qquad \text{corresponding to $f_k^*(u_L)$ in \eqref{e:FFlie}},\\
\mathcal F'_k((\ell_a)_a)&:=&\sum_{j=1}^{k} \ell_{a'_{n-k+1,j}} \qquad \text{corresponding to $f_{k}^*(u_R)$ in \eqref{e:FFlie}.} 
\end{eqnarray}
Every arrow appears in exactly one of these sums, so that $\mathcal F=\sum_{k=1}^n \mathcal F_k+\sum_{k=1}^n\mathcal F'_k$.
\end{defn}   
Now, since we already changed perspective to $Q_{n+1}$, we can easily write down an embedding 
\begin{eqnarray}
\ToeplitzU_{n+1}(\R_{>0})&\hookrightarrow & \mathbb T_{Giv}(\R_{>0})\label{e:ToepIntoGiv} \\
(m_{ij})_{i+j\le n+1} &\mapsto & (\ell_{a})=(v_{h(a)}-v_{t(a)})_{a\in\mathcal A_{n+1}},\notag
\end{eqnarray}
for $(v_{ij})_{i,j}$ as in Definition~\ref{d:Qn+1}. Namely, this is well-defined by Proposition~\ref{p:ToeplitzEquiv}. The composite~embedding,
\begin{equation}\label{e:REmbedding}
\ToeplitzU_{n+1}(\R_{>0})\hookrightarrow Z_{\Lie}(\R_{>0})\overset{{\text{\cite{rietsch}}}}{\longrightarrow} \mathbb T_{Giv}(\R_{>0}),
\end{equation}
which could a priori be different, can be shown to agree with \eqref{e:ToepIntoGiv} using \cite[Theorem~4.3.1]{Ludenbach}. This perspective on \eqref{e:ToepIntoGiv} could  be expanded into an alternative proof of  Proposition~\ref{p:ToeplitzEquiv}. Note our $m_{ij}$ are not coordinates on $Z_{\Lie}$, though, and are distinct from the (inverted) ideal coordinates of \cite{Ludenbach}, see Remark~\ref{r:GiventalComparison}. 
%We finally note that  \eqref{e:ToepIntoGiv} and \eqref{e:REmbedding} are the same embedding by work of L\"udenbach \cite{Ludenbach}. Moreover, conversely, the embedding \eqref{e:REmbedding} and  \cite[Theorem~4.3.1]{Ludenbach}, which can be used to relate our coordinates to (inverse) ideal coordinates of \cite{Ludenbach} in the Toeplitz case, could be turned into an alternative proof of Proposition~\ref{p:ToeplitzEquiv}. See also Remark~\ref{r:GiventalComparison}. 

Let us now write $\mathcal F_k|_{\ToeplitzU_{n+1}}$  for the pullback  of $\mathcal F_k$ under \eqref{e:ToepIntoGiv} and similarly for $\mathcal F'_k$, and note that these pullbacks agree with the restrictions of their corresponding terms in $\FFlie$. 
\begin{lemma}\label{l:Fks} The superpotential $\FFlie$ restricted to $\ToeplitzU_{n+1}(\R_{>0})$ is given in standard coordinates  by 
\[
\FFlie|_{\ToeplitzU_{n+1}}((m_{ij})_{i,j})=2\sum_{k=1}^n v_{k,n-k+1},
\] 
using the diagonal vertex labels, see \eqref{e:vij}. Moreover for the individual summands of $\FFlie$
we have
\begin{eqnarray}
\mathcal F_k|_{\ToeplitzU_{n+1}}((m_{ij})_{i,j}) &= &v_{k,n-k+1}=\frac {1}{m_{k,n-k+1}}+\sum_{r=1}^{\min(k-1,n-k)}\frac {1}{m_{k-r,n-k+1-r}},
\label{e:Fk}\\
\mathcal F'_k|_{\ToeplitzU_{n+1}}((m_{ij})_{i,j})&= &v_{n-k+1,k}=
\frac {1}{m_{n-k+1,k}}+\sum_{r=1}^{\min(k-1,n-k)}\frac {1}{m_{n-k+1-r,k-r}}.
\label{e:Fk'}
\end{eqnarray}
\end{lemma}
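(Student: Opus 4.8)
The plan is to pull back the summand decomposition $\mathcal F=\sum_{k=1}^{n}\mathcal F_k+\sum_{k=1}^{n}\mathcal F'_k$ of \cref{d:Fsummands} along the embedding \eqref{e:ToepIntoGiv} and to observe that under this pullback each path-sum becomes telescoping. Recall from the discussion preceding the lemma that $\FFlie|_{\ToeplitzU_{n+1}}$ is precisely the pullback of $\mathcal F$ under \eqref{e:ToepIntoGiv}, that $\mathcal F_k|_{\ToeplitzU_{n+1}}$ and $\mathcal F'_k|_{\ToeplitzU_{n+1}}$ denote the pullbacks of the corresponding summands, and that \eqref{e:ToepIntoGiv} sends $(m_{ij})$ to the arrow labelling $\ell_a=v_{h(a)}-v_{t(a)}$, a legitimate point of $\mathbb T_{Giv}(\R_{>0})$ by \cref{p:ToeplitzEquiv}. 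So the whole lemma reduces to substituting $\ell_a=v_{h(a)}-v_{t(a)}$ into the explicit expressions for $\mathcal F_k$ and $\mathcal F'_k$ from \cref{d:Fsummands}.

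First I would identify which arrows occur in each summand. The vertical arrows $a_{1,n-k+1},a_{2,n-k+1},\dots,a_{k,n-k+1}$ indexing $\mathcal F_k$ are exactly the arrows of the distinguished vertical path $(0,0)\to(1,n-k+1)\to\cdots\to(k,n-k+1)$, since $a_{i,n-k+1}$ has head $(i,n-k+1)$ and tail $(i-1,n-k+1)$, or $(0,0)$ when $i=1$. Substituting $\ell_a=v_{h(a)}-v_{t(a)}$ and using that the source vertex $(0,0)$ carries the label $0$, the sum $\sum_{i=1}^{k}\ell_{a_{i,n-k+1}}$ telescopes to $v_{k,n-k+1}$. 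Running the same argument along the distinguished horizontal path $(0,0)\to(n-k+1,1)\to\cdots\to(n-k+1,k)$ gives $\mathcal F'_k|_{\ToeplitzU_{n+1}}=v_{n-k+1,k}$. Now plugging the formula \eqref{e:vij} for these diagonal vertex labels into $v_{k,n-k+1}$ and $v_{n-k+1,k}$, and using that $\min(k,n-k+1)-1=\min(k-1,n-k)$, yields the closed forms \eqref{e:Fk} and \eqref{e:Fk'}.

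Summing over $k$ then gives $\FFlie|_{\ToeplitzU_{n+1}}=\sum_{k=1}^{n}v_{k,n-k+1}+\sum_{k=1}^{n}v_{n-k+1,k}$, and reindexing the second sum by $k\mapsto n+1-k$ turns it into $\sum_{k=1}^{n}v_{k,n-k+1}$, which produces the claimed factor of $2$.

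There is no real obstacle here; the content is entirely the bookkeeping. The points to be careful about are that the index ranges of the arrows along each distinguished monotone path are exactly those appearing in \cref{d:Fsummands}, so that the sums genuinely telescope down to the terminal diagonal label with the source contributing $0$; that the two $\min$ expressions above coincide; and that the reindexing of the $\mathcal F'_k$-sum is set up correctly. I take the identification of $\mathcal F_k|_{\ToeplitzU_{n+1}}$ and $\mathcal F'_k|_{\ToeplitzU_{n+1}}$ with the restrictions of the matching $f_k^*(u_L)$ and $f_k^*(u_R)$ terms of $\FFlie$ to be already recorded in the paragraph before the lemma, so it need not be reproved.
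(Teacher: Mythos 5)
Your proof is correct and takes the same route as the paper: the paper's argument for \eqref{e:Fk} and \eqref{e:Fk'} is the one-line remark that the definitions of $\mathcal F_k,\mathcal F'_k$ combined with the embedding \eqref{e:ToepIntoGiv} give a telescoping sum, and your writeup simply fills in that telescoping along the two monotone paths, together with the $\min(k,n-k+1)-1=\min(k-1,n-k)$ identity and the $k\mapsto n+1-k$ reindexing that produces the factor $2$.
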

Note that the above formulas only make sense after restriction, as our $m_{ij}$ are not coordinates on $Z_{\Lie}$. % We combine the proof with the proof of the following corollary. 
\begin{cor}\label{c:Fasymptotics}
Suppose the sequence $(u^{(n+1)})_n$ with $u^{(n+1)}\in \ToeplitzU_{n+1}(\R_{>0})$ 
%given in terms of standard coordinates $(m_{ij}^{(n+1)})_{i,j}$ 
converges uniformly to an element $u(\mathbf c)\in \ToeplitzU^\circ_{\infty}(\R_{>0})$ with Schoenberg parameters $\boldalpha=(\alpha_i)_i$ and $\boldbeta=( \beta_j)_j$. Then 
\begin{eqnarray}
\lim_{n\to\infty} \mathcal F_k|_{\ToeplitzU_{n+1}}(u^{(n+1)})&=&\frac{1}{\alpha_1}+\frac{1}{\alpha_2}+\dotsc+\frac{1}{\alpha_k},
\label{e:limFk}\\
\lim_{n\to\infty} \mathcal F'_k|_{\ToeplitzU_{n+1}}(u^{(n+1)})&=&\frac{1}{\beta_1}+\frac{1}{\beta_2}+\dotsc+\frac{1}{\beta_k}
.\label{e:limFk'}
\end{eqnarray}
The value of the full superpotential $\FFlie|_{\ToeplitzU_{n+1}}(u^{(n+1)})$ diverges as $n\to\infty$. 
\end{cor}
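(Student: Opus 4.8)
The plan is to establish \cref{l:Fks} first, by a telescoping identity on the quiver $Q_{n+1}$, and then to read off \cref{c:Fasymptotics} by feeding the resulting coordinate formulas into Edrei's convergence statement \cref{p:Edreilimits}, reusing the uniform-convergence bookkeeping from the proof of \cref{t:IntroVKtyp}.

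For \cref{l:Fks}: under the embedding \eqref{e:ToepIntoGiv} an arrow $a$ carries the label $\ell_a = v_{h(a)} - v_{t(a)}$, with the source vertex $(0,0)$ assigned label $0$; by \cref{d:Fsummands}, $\mathcal F_k|_{\ToeplitzU_{n+1}}$ is the sum of the labels $\ell_{a_{1,n-k+1}}, \dots, \ell_{a_{k,n-k+1}}$ along the purely vertical path $(0,0) \to (1,n-k+1) \to \cdots \to (k,n-k+1)$, which telescopes to the diagonal vertex label $v_{k,n-k+1}$. Expanding this using \eqref{e:vij} and noting $\min(k,n-k+1)-1 = \min(k-1,n-k)$ gives \eqref{e:Fk}; the same argument along the horizontal path gives $\mathcal F'_k|_{\ToeplitzU_{n+1}} = v_{n-k+1,k}$ and hence \eqref{e:Fk'}. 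Finally $\FFlie = \sum_{k=1}^n \mathcal F_k + \sum_{k=1}^n \mathcal F'_k$, and reindexing the second sum via $k \mapsto n-k+1$ turns it into $\sum_{k=1}^n v_{k,n-k+1}$ as well, so that $\FFlie|_{\ToeplitzU_{n+1}} = 2\sum_{k=1}^n v_{k,n-k+1}$.

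For the two limits in \cref{c:Fasymptotics}: fix $k$ and take $n \ge 2k-1$, so that by \eqref{e:Fk} the quantity $\mathcal F_k|_{\ToeplitzU_{n+1}}(u^{(n+1)})$ is a fixed-length sum $\sum_{r=0}^{k-1} 1/m^{(n+1)}_{k-r,\,n-k+1-r}$ in which the row index $k-r$ stays in $\{1,\dots,k\}$ while the column index $n-k+1-r$ tends to $\infty$. Treating first the truncations $u^{[n+1]}$ (where $m^{[n+1]}_{ij} = m_{ij}$), \cref{p:Edreilimits} gives $m_{k-r,\,n-k+1-r} \to \alpha_{k-r}$, which is strictly positive since $u \in \ToeplitzU^\circ_\infty(\R_{>0})$; hence each summand converges and the $k$-term sum tends to $1/\alpha_1 + \cdots + 1/\alpha_k$. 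To pass to a general uniformly convergent sequence $u^{(n+1)}$ I would invoke, exactly as in the proof of \cref{t:IntroVKtyp}, that uniform convergence forces $|\ln m^{(n+1)}_{hj} - \ln m_{hj}| < \ep$ for all $h+j \le n+1$ once $n$ is large; since the finitely many relevant coordinates then have the same limits $\alpha_{k-r}$, the conclusion \eqref{e:limFk} follows. Formula \eqref{e:limFk'} is obtained identically after applying the transpose symmetry $m_{ij} \leftrightarrow m_{ji}$, which interchanges $\mathcal F_k$ with $\mathcal F'_k$ and, by the second half of \cref{p:Edreilimits}, $\alpha_i$ with $\beta_i$ (here one uses $\beta_j > 0$).

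For the divergence of $\FFlie|_{\ToeplitzU_{n+1}}(u^{(n+1)})$: all $2n$ summands $\mathcal F_k, \mathcal F'_k$ are positive, so $\FFlie|_{\ToeplitzU_{n+1}}(u^{(n+1)}) \ge \mathcal F_{k_0}|_{\ToeplitzU_{n+1}}(u^{(n+1)})$ for any fixed $k_0$ and all $n \ge k_0$; taking $n \to \infty$ and using \eqref{e:limFk} gives $\liminf_{n\to\infty} \FFlie|_{\ToeplitzU_{n+1}}(u^{(n+1)}) \ge 1/\alpha_1 + \cdots + 1/\alpha_{k_0}$ for every $k_0$. Since $(\alpha_i)$ is a positive, weakly decreasing, summable sequence it tends to $0$, so $\sum_i 1/\alpha_i = \infty$ and therefore $\FFlie|_{\ToeplitzU_{n+1}}(u^{(n+1)}) \to \infty$. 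The only point needing genuine care is the interchange of the $n \to \infty$ limit with the sums in \eqref{e:Fk} and \eqref{e:Fk'}, together with the transfer from truncations to general uniformly convergent sequences; but this is precisely the bookkeeping already carried out for \cref{t:IntroVKtyp} and presents no new obstacle, the remainder being the telescoping identity and a direct appeal to \cref{p:Edreilimits}.
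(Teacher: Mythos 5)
Your proposal is correct and follows essentially the same route as the paper: the telescoping identity along the quiver paths gives the coordinate formulas of \cref{l:Fks}, \cref{p:Edreilimits} together with the uniform-convergence bookkeeping recycled from \cref{t:IntroVKtyp} gives the two limit formulas, and positivity of the summands gives the divergence. The only cosmetic difference is that your divergence argument explicitly invokes $\sum_i 1/\alpha_i = \infty$ (via summability of $\boldalpha$ and $\alpha_i > 0$), whereas the paper says more briefly that the summands grow while their number tends to infinity; both amount to the same thing.
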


\begin{proof}[Proof of Lemma~\ref{l:Fks} and Corollary~\ref{c:Fasymptotics}]
The definition of $\mathcal F_k$ and $\mathcal F_k'$ combined with the embedding \eqref{e:ToepIntoGiv} leads to a telescopic sum that immediately implies the formulas \eqref{e:Fk} and \eqref{e:Fk'}.   Note that the number of summands in $\mathcal F_k$ and $\mathcal F'_{k}$ is simply $k$ (for large $n$). The limit formulas  \eqref{e:limFk} and \eqref{e:limFk'} now follow from Proposition~\ref{p:Edreilimits} together with an analogous argument to the one in Theorem~\ref{t:IntroVKtyp} concerning the convergence. The value of $\FFlie$ diverges since the summands grow and their number goes to infinity. %we have that $\mathcal F_{k} 
\end{proof}

\begin{remark}[Chern roots]\label{r:ChernRoots}
The two sides of the identity $\mathcal F_k|_{\ToeplitzU_{n+1}}=\mathcal F'_{n-k+1}|_{\ToeplitzU_{n+1}}$, see  Lemma~\ref{l:Fks}, can be interpreted as relating under \eqref{e:Petersoniso} to the following two different expressions for $\sigma^{s_k}$ in $qH^*(\mathcal Fl_{n+1})[q_i\inv]$,
\begin{equation*}\label{identities}
\begin{array}{ccccccc}
c_1(V_k^*)&=&x_1+\dotsc+x_k& =&c_1(\C^{n+1}/V_{k})&=&-x_{n+1}-x_n-\dotsc -x_{k+1}.
\end{array}
\end{equation*}
Thus Corollary~\ref{c:Fasymptotics} describes the functions $\sum_{i=1}^k 1/\alpha_i$ and $\sum_{j=1}^{k} 1/\beta_j$ on $\ToeplitzU_\infty^\circ(\R_{>0})$ in terms of asymptotic limits of the first Chern classes $c_1(V_{k}^*)$  and $c_1(\C^{n+1}/V_{n-k+1})$, respectively, after applying the Peterson isomorphism~\eqref{e:Petersoniso} and restricting to $\ToeplitzU_{n+1}(\R_{>0})$.  
Moreover, $\frac{1}{\alpha_1},\dotsc, \frac{1}{\alpha_k}$ and $\frac 1{\beta_1},\dotsc,\frac 1{\beta_k}$ individually can be thought of as asymptotic limits of Chern roots of these rank $k$ vector bundles (as $n\to\infty$). Restated another way, the roots and poles of the generating function~\eqref{e:SchoenbergFormula} in Edrei's theorem gain natural interpretations as asymptotic limits of the generators of $qH^*(\mathcal Fl_{n+1},\C)$. Namely, the $i$\textsuperscript{th} pole $1/\alpha_i$ is the  asymptotic limit of $x_i=c_1((V_i/V_{i-1})^*)$, and the $j$\textsuperscript{th} root $-1/\beta_j$ the asymptotic limit of $x_{n-j+2}=c_1((V_{n-j+2}/V_{n-j+1})^*)$. If we formally introduce a normalised Chern polynomial $\bar c_t(V)=c_t(V)/e(V)$, where $e(V)$ is the Euler class of $V$, for example $\bar c_t(V_k)={\prod_{i=1}^k(t-x_i\inv)}$,  then partial products of \eqref{e:SchoenbergFormula} emerge as limits after substitution of $t=x\inv$ into
\begin{eqnarray}\label{e:Chernpol}
\frac{\bar c_t(\C^{n+1}/V_{n-k+1})}{\bar c_t(V_k)}
&=&  \frac{\prod_{j=1}^k(1-x_{n-j+2}\inv t\inv)}{\prod_{i=1}^k(1-x_i\inv t\inv)}\quad \longrightarrow\quad \frac{\prod_{j=1}^k(1+\beta_{j} t\inv)}{\prod_{i=1}^k(1-\alpha_i t\inv)} \qquad (n\to\infty).
\end{eqnarray} 
\end{remark}

Corollary~\ref{c:Fasymptotics} and Remark~\ref{r:ChernRoots} now allow us to describe the asymptotic limits of a general Schubert class $\sigma^w$ in $qH^*(\mathcal Fl_{n+1},\C)$. 
Fix $w\in S_{\infty}$, i.e. $w\in S_k$ for some $k$, and also
 consider the related permutation  $w^{(n+1)}$, 
which is obtained by conjugating by the longest element of $S_{n+1}$. For example, if $w=s_k$ is the $k$-th simple transposition, then $w^{(n+1)}=s_{n-k+1}$ if $n\ge k$ (and $s_k$ otherwise).
\begin{theorem}\label{t:Schubertasymptotics}
Suppose the sequence $(u^{(n+1)})_n$ with $u^{(n+1)}\in \ToeplitzU_{n+1}(\R_{>0})$  converges uniformly to an element $u(\mathbf c)\in \ToeplitzU^{\circ\circ}_{\infty}(\R_{>0})$ that has Schoenberg parameters $\boldalpha=(\alpha_i)_i$ and $\boldbeta=( \beta_j)_j$. Let $w\in S_\infty$ and associate $\mathfrak S^{w}, \mathfrak S^{w^{(n+1)}}\in \C[\ToeplitzU_{n+1}(\C)][\Delta_i\inv]$ for $n$ large enough, using \eqref{e:Petersoniso}. Then
\begin{eqnarray}
\lim_{n\to\infty}\mathfrak S^w(u^{(n+1)})&=&\mathcal  S_w\left(\frac{1}{\alpha_1},\frac{1}{\alpha_2},\frac{1}{\alpha_3},\dotsc\right),\label{e:limSw}\\
\lim_{n\to\infty}\mathfrak S^{w^{(n+1)}}(u^{(n+1)})&=&\mathcal S_w\left(\frac{1}{\beta_1},\frac{1}{\beta_2},\frac{1}{\beta_3},\dotsc\right),\label{e:limSwn}
\end{eqnarray}
where $\mathcal S_w$ is the classical Schubert polynomial 
associated to $w$.     
\end{theorem}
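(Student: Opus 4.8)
The plan is to bootstrap from \cref{c:Fasymptotics}, which already proves the theorem for the generators $w=s_k$, using that $qH^*(\mathcal Fl_{n+1})$ is generated by the degree-two classes $\sigma^{s_k}$ together with the stable quantum Schubert polynomials of \cite{FGP}. Fix $w\in S_k$ and let $\mathfrak S^q_w=\mathfrak S^q_w(x_1,x_2,\dots;q_1,q_2,\dots)$ be its quantum Schubert polynomial. By the stability in \cite{FGP}, for all $n\ge k$ this is one and the same polynomial, involving only finitely many of the variables $x_i,q_j$ with indices bounded in terms of $w$; it represents $\sigma^w$ in $qH^*(\mathcal Fl_{n+1})$, and $\mathfrak S^q_w(x;0)=\mathcal S_w(x)$. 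Since \eqref{e:Petersoniso} is a ring isomorphism, the function $\mathfrak S^w$ on $\ToeplitzU_{n+1}$ is obtained from $\mathfrak S^q_w$ by substituting the images of the generators: using $\sigma^{s_k}=x_1+\dots+x_k$, the image of $x_k$ is $\mathfrak S^{s_k}-\mathfrak S^{s_{k-1}}$ (with $\mathfrak S^{s_0}:=0$), and by \cite[Corollary~28]{Kos:QCoh} the image of $q_j$ is the $j$-th quantum parameter function from \eqref{e:q-paramToeplitz}, whose value at $u^{(n+1)}$ I write $q^{(n+1)}_j$.

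Next I would insert the asymptotics. By \cref{l:Fks} the restriction of $\mathfrak S^{s_k}$ to $\ToeplitzU_{n+1}$ equals $v_{k,n-k+1}$, so \cref{c:Fasymptotics} gives $\mathfrak S^{s_k}(u^{(n+1)})\to \tfrac1{\alpha_1}+\dots+\tfrac1{\alpha_k}$, whence by subtraction the image of $x_k$ evaluated at $u^{(n+1)}$ tends to $\tfrac1{\alpha_k}$. For the quantum parameters, \eqref{e:limqi} gives $\sqrt[n]{q^{(n+1)}_j}\to \alpha_{j+1}/\alpha_j$, which is strictly less than $1$ precisely because $u(\mathbf c)\in\ToeplitzU^{\circ\circ}_\infty(\R_{>0})$ (so $\alpha_{j+1}<\alpha_j$); hence $q^{(n+1)}_j\to 0$ for every fixed $j$. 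As $\mathfrak S^q_w$ is a fixed polynomial in finitely many variables and polynomial evaluation is continuous,
\[
\mathfrak S^w(u^{(n+1)})=\mathfrak S^q_w\big(\text{images of }x_i,q_j\text{ at }u^{(n+1)}\big)\ \longrightarrow\ \mathfrak S^q_w\Big(\tfrac1{\alpha_1},\tfrac1{\alpha_2},\dots;0,0,\dots\Big)=\mathcal S_w\Big(\tfrac1{\alpha_1},\tfrac1{\alpha_2},\dots\Big),
\]
which is \eqref{e:limSw}. As in the proof of \cref{t:IntroVKtyp}, the images of the $x_i$ involve standard coordinates $m^{(n+1)}_{ab}$ with both indices growing with $n$, so these coordinatewise limits combine uniform convergence with \cref{p:Edreilimits} via an $\ep/2$-argument; this is exactly what \cref{c:Fasymptotics} already packages.

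For \eqref{e:limSwn} I would use the transpose involution $\iota\colon(m_{ij})\mapsto(m_{ji})$, which restricts to a homeomorphism of $\ToeplitzU_{n+1}(\R_{>0})$ and of $U_+^{(\infty)}(\R_{>0})$ compatibly with truncation (it carries the divergence-free quiver labelings of \cref{p:ToeplitzEquiv} to one another by reflecting $Q_{n+1}$ across its diagonal). As observed in the proof of \cref{t:IntroVKtyp}, $\iota$ interchanges the two families of limits in \cref{p:Edreilimits}, hence swaps the Schoenberg parameters; in particular $\iota(u(\mathbf c))\in\ToeplitzU^{\circ\circ}_\infty(\R_{>0})$ has parameters $(\boldbeta,\boldalpha)$ and $(\iota(u^{(n+1)}))_n$ converges uniformly to it. By \cref{l:Fks}, $\iota$ sends $\mathcal F_k|_{\ToeplitzU_{n+1}}=v_{k,n-k+1}$ to $v_{n-k+1,k}=\mathcal F'_k|_{\ToeplitzU_{n+1}}=\mathcal F_{n-k+1}|_{\ToeplitzU_{n+1}}$, so the ring automorphism of $qH^*(\mathcal Fl_{n+1})[q_i\inv]$ induced by $\iota$ via \eqref{e:Petersoniso} sends $\sigma^{s_k}\mapsto\sigma^{s_{n-k+1}}$ (and, tracking \eqref{e:q-paramToeplitz}, $q_i\mapsto q_{n+1-i}$); since $qH^*(\mathcal Fl_{n+1})[q_i\inv]$ is generated by these classes and parameters, this automorphism is the standard Dynkin diagram automorphism, which on Schubert classes is conjugation by $w_0$, i.e. $\sigma^v\mapsto\sigma^{v^{(n+1)}}$. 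Hence $\mathfrak S^{w^{(n+1)}}(u^{(n+1)})=\mathfrak S^w(\iota(u^{(n+1)}))$, and the first formula applied to $(\iota(u^{(n+1)}))_n$ (whose limit has parameters $(\boldbeta,\boldalpha)$) yields $\mathfrak S^{w^{(n+1)}}(u^{(n+1)})\to\mathcal S_w(\tfrac1{\beta_1},\tfrac1{\beta_2},\dots)$.

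The limit bookkeeping is routine (it mirrors \cref{t:IntroVKtyp} and is absorbed into \cref{c:Fasymptotics}). The two points that require care are the clean identity $\mathfrak S^w=\mathfrak S^q_w(\text{Peterson images})$, which relies on the stability of the FGP representatives together with knowing the images of the generators $x_k,q_j$ under \eqref{e:Petersoniso}, and the assertion that the transpose involution $\iota$ realises the automorphism $\sigma^v\mapsto\sigma^{v^{(n+1)}}$; I expect the latter to be the main thing to pin down, though it amounts to the standard fact that the ring automorphism of $qH^*(\mathcal Fl_{n+1})[q_i\inv]$ fixed by $\sigma^{s_k}\mapsto\sigma^{s_{n-k+1}}$ and $q_i\mapsto q_{n+1-i}$ is conjugation by $w_0$, and is not deep.
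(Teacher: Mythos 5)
Your proof of \eqref{e:limSw} is essentially identical to the paper's: both evaluate the stable FGP quantum Schubert polynomial at the Peterson images of $x_k$ and $q_j$, use \cref{c:Fasymptotics} to get $x_k^{(n+1)}\to 1/\alpha_k$, use \eqref{e:limqi} and the $\ToeplitzU^{\circ\circ}_\infty$ hypothesis to get $q_j^{(n+1)}\to 0$, and pass to the limit coordinatewise in a fixed polynomial.

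For \eqref{e:limSwn} you take a genuinely different, more geometric route. The paper stays entirely on the cohomology-polynomial side: it notes that the ring involution of $qH^*(\mathcal Fl_{n+1})$ given by $x_i\leftrightarrow -x_{n-i+1}$, $q_k\leftrightarrow q_{n-k+1}$ swaps $\sigma^w\leftrightarrow\sigma^{w^{(n+1)}}$, applies it directly to the identity \eqref{e:qSchubert} while keeping $u^{(n+1)}$ fixed, and then passes to the limit using $-x_{n-j+1}^{(n+1)}\to 1/\beta_j$, $q_{n-k+1}^{(n+1)}\to 0$. You instead realise this involution geometrically as the transpose map $\iota\colon m_{ij}\mapsto m_{ji}$ on $\ToeplitzU_{n+1}(\R_{>0})$, check via \cref{p:ToeplitzEquiv} and \cref{l:Fks} that $\iota$ preserves the Toeplitz locus, swaps the Schoenberg parameters, and intertwines the Peterson isomorphism with the Dynkin automorphism on the generators $\mathfrak S^{s_k},q_i$, hence sends $\mathfrak S^v\mapsto\mathfrak S^{v^{(n+1)}}$; then you apply \eqref{e:limSw} to the sequence $(\iota(u^{(n+1)}))_n$. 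Both arguments are correct and rely on the same underlying fact that the Dynkin automorphism of $qH^*$ swaps $\sigma^w$ and $\sigma^{w^{(n+1)}}$, but the paper's variant is a one-line algebraic substitution whereas yours buys a concrete geometric realisation of that automorphism as the transpose on standard coordinates, at the cost of verifying the identity $\mathfrak S^{w^{(n+1)}}(u^{(n+1)})=\mathfrak S^w(\iota(u^{(n+1)}))$ generator by generator, and of checking that $\iota$ respects the uniform-convergence hypothesis so that \eqref{e:limSw} can legitimately be applied to $(\iota(u^{(n+1)}))_n$.
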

In the case of a simple reflection  we have $ \mathcal S_{s_k}(x_1,x_2,\dotsc)=x_1+\dotsc +x_k$, while $\mathfrak S^{s_k}=\mathcal F_k|_{\ToeplitzU_{n+1}}$ and $\mathfrak S^{s_{n-k+1}}=\mathcal F'_k|_{\ToeplitzU_{n+1}}$, so that for  $w=s_k$ Theorem~\ref{t:Schubertasymptotics} follows from Corollary~\ref{c:Fasymptotics}.
\begin{proof}
We use the quantum Schubert polynomial $\mathcal  S^q_{w}(x_1,x_2,\dotsc;q_{1},q_2,\dotsc)$ of \cite{FGP}. Note that this is a well-defined polynomial associated to $w$ independently of $n$, by the stability property from \cite[Theorem~10.1]{FGP}. Let us always choose $n$ large enough so  that $\mathcal  S^q_{w}=S^q_w(x_1,x_2,\dotsc, x_{n+1};q_{1},\dotsc, q_n)$, and let $q_i^{(n+1)}$ and $x_i^{(n+1)}$ be the values of $q_i$ and $x_i$ on $u^{(n+1)}$ using the Peterson isomorphism \eqref{e:Petersoniso}. Then 
\begin{equation}\label{e:qSchubert}
\mathfrak S^w(u^{(n+1)})=\mathcal S^q_{w}\left(x_1^{(n+1)},x_2^{(n+1)},\dotsc, x_{n+1}^{(n+1)}; q_1^{(n+1)},q_2^{(n+1)},\dotsc, q_{n}^{(n+1)} \right).
\end{equation} Fix a $k$. By assumption on $u(\mathbf c)$ we have $ \alpha_k>\alpha_{k+1}$, and therefore $\lim_{n\to\infty}\sqrt[n]{q^{(n+1)}_k}=\frac{\alpha_{k+1}}{\alpha_k}<1$, see \eqref{e:limqi}. Consequently we must have $\lim_{n\to\infty}{q^{(n+1)}_k}=0$. Similarly, $\beta_{k}>\beta_{k+1}$ implies that $\lim_{n\to\infty}{q^{(n+1)}_{n-k+1}}=0$. 
Moreover, Corollary~\ref{c:Fasymptotics} implies that $x^{(n+1)}_k$ converges to $\frac 1 {\alpha_k}$ and $-x^{(n+1)}_{n-k+1}$ to $\frac 1 {\beta_k}$, see Remark~\ref{r:ChernRoots}. Let us now take the limit $n\to\infty$ on both sides of \eqref{e:qSchubert}. The $x_k^{(n+1)}$ converge to $\frac 1{\alpha_k}$ and the $q_k^{(n+1)}$ to $0$, and since $\mathcal S^q_w$ is a polynomial (independent of $n$), we can take the coordinate-wise limit and \eqref{e:limSw} follows. 
To prove the remaining identity \eqref{e:limSwn}, we consider the involution on $qH^*(\mathcal Fl_{n+1}\C)$ that takes  $x_i\leftrightarrow -x_{n-i+1}$ and $q_k\leftrightarrow q_{n-k+1}$. This involution swaps $\sigma^{w}\leftrightarrow \sigma^{w^{(n+1)}}$ and applied on both sides of \eqref{e:qSchubert} via \eqref{e:Petersoniso} gives
\[
\mathfrak S^{w^{(n+1)}}(u^{(n+1)})=\mathcal S^q_{w}\left(-x^{(n+1)}_{n+1},\dotsc, -x_{1}^{(n+1)};q^{(n+1)}_n,\dotsc, q^{(n+1)}_1\right).
\]
Now we can again take the limit $n\to\infty$ and the identity \eqref{e:limSwn} follows.   %\cite[p 214]{AF:book}
\end{proof}
\subsection{The multiplication table of $\N$}\label{s:MultiplicationTable} 
Before carrying on to the next  asymptotic result, which involves a tropicalisation of the finite and infinite Toeplitz matrices, we prove the following elementary  statement about divergence-free labellings of $Q_\infty$. This is an application (and equivalent restatement) of Edrei's theorem, that the only totally positive sequences whose generating function is entire with no zeros are of the form  $e^{\gamma x}$. Let us call a vertex labelling of the infinite quiver $Q_\infty$ from Figure~\ref{f:QinfQ5} \textit{normalised} if the vertex $(1,1)$ is labelled by $1$. Normalised divergence-free positive vertex labellings of $Q_\infty$ correspond to characters of the infinite symmetric group, by Proposition~\ref{p:ToeplitzEquiv} combined with Thoma's Theorem~\ref{t:ThomaIntro}. 

\begin{theorem}\label{t:MultiplicationTable}
The vertex labelling of $Q_\infty$ given by the multiplication table of the natural numbers,
\[ (i,j)\mapsto v_{ij}:=i\cdot j\ 
\] 
is divergence-free, and it is the unique normalised divergence-free labelling $(v_{ij})_{i,j}$ for which the vertex labels $v_{ij}$ are unbounded along the first row and column.  
\end{theorem}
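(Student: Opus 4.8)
The plan is to settle divergence-freeness by a direct computation and to derive uniqueness from Edrei's theorem, using the correspondence (\cref{p:ToeplitzEquiv} together with \cref{t:ThomaIntro}) between normalised divergence-free positive labellings of $Q_\infty$ and characters of $S_\infty$.

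\emph{Divergence-freeness and existence.} First I would note that $Q_\infty$ has the single source $(0,0)$ and no sinks, so the condition only has to be checked at the vertices $(i,j)$ with $i,j\ge 1$. For the multiplication-table labelling $v_{ij}=ij$ (the source carrying $0$) the two incoming arrow labels at $(i,j)$ are $v_{ij}-v_{i-1,j}=j$ and $v_{ij}-v_{i,j-1}=i$ — with the reading $v_{0,j}=v_{i,0}=v_{00}=0$ on the boundary — and the two outgoing arrow labels are $v_{i+1,j}-v_{ij}=j$ and $v_{i,j+1}-v_{ij}=i$; hence the product of the incoming labels and the product of the outgoing labels both equal $ij$, so the labelling is divergence-free. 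I would then record in passing that this labelling is positive, normalised ($v_{11}=1$), and has unbounded first-row and first-column labels ($v_{1j}=j$, $v_{i1}=i$), so it satisfies all the hypotheses of the statement — this gives the existence half of the uniqueness claim.

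\emph{Uniqueness.} Given an arbitrary normalised divergence-free positive labelling $(v_{ij})$, I would invoke \cref{p:ToeplitzEquiv} and \cref{t:ThomaIntro} to realise it as the labelling attached by \cref{d:vij} to a unique infinite totally positive Toeplitz matrix $u=u(\mathbf c)$ with $c_1=1$, with Schoenberg parameters $(\boldalpha,\boldbeta)$ and $\gamma=1-\sum_i(\alpha_i+\beta_i)$. From \eqref{e:vij} I get $v_{1j}=1/m_{1j}$, and from \eqref{e:mviaminors} — where the minors indexed by $[0]=\emptyset$ equal $1$ — that $m_{1j}=c_j/c_{j-1}$, so $v_{1j}=c_{j-1}/c_j$. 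Since $c_j/c_{j-1}\to\alpha_1$ (the first instance of \cref{p:Edreilimits}, recalled right after it via the radius of convergence), the first-row labels are bounded precisely when $\alpha_1>0$; symmetrically $v_{i1}=1/m_{i1}$ with $m_{i1}\to\beta_1$ as $i\to\infty$ (the $i\to\infty$ case of \cref{p:Edreilimits}), so the first-column labels are bounded precisely when $\beta_1>0$. Assuming unboundedness along both the first row and the first column, I conclude $\alpha_1=\beta_1=0$, hence, by monotonicity and nonnegativity, all $\alpha_i=\beta_j=0$, hence $\gamma=1$; so Edrei's \cref{t:EdreiIntro} — that is, \eqref{e:SchoenbergFormula} — forces the generating function to be $e^{x}$, i.e. $c_k=1/k!$. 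This determines $u$, and hence, through \eqref{e:mviaminors} and \eqref{e:vij}, the labelling $(v_{ij})$, uniquely; since the multiplication-table labelling meets the hypotheses, it must be this one. (As a concrete check, $c_k=1/k!$ yields $m_{ij}=1/(i+j-1)$, whence $v_{ij}=\sum_{\ell=0}^{\min(i,j)-1}(i+j-1-2\ell)=ij$ directly.)

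\emph{Main obstacle.} The only genuinely non-formal ingredient is Edrei's theorem, which I use as a black box; everything else is bookkeeping. The step I expect to need the most care is converting the combinatorial hypothesis — unbounded labels along the first row and the first column — into the analytic condition $\alpha_1=\beta_1=0$: this means correctly identifying the first-row and first-column labels with the reciprocals $1/\alpha_1$ and $1/\beta_1$ through \eqref{e:mviaminors} and \cref{p:Edreilimits}, and then observing that the vanishing of the parameters pins down the entire labelling and not merely the parameters.
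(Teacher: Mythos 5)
Your proof is correct and follows essentially the same route as the paper's: a direct check of divergence-freeness for the multiplication table, followed by an application of \cref{p:ToeplitzEquiv} to identify the labelling with a totally positive Toeplitz matrix and then Edrei's theorem together with \cref{p:Edreilimits} to deduce $\alpha_1=\beta_1=0$ from the unboundedness hypothesis, hence all Schoenberg parameters vanish, hence the generating function is $e^x$. The citation of \cref{t:ThomaIntro} is superfluous — the reduction to a Toeplitz matrix with $c_1=1$ needs only \cref{p:ToeplitzEquiv} and the normalisation $v_{11}=1/c_1=1$, not Thoma's theorem — but this does not affect the validity of the argument.
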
 
This theorem implies also that the multiplication table is  the only \textit{integral} positive, normalised, divergence-free labelling. Note that this very special  positive vertex labelling of $Q_\infty$ is precisely the labelling associated to the Toeplitz matrix with generating function $e^x$, or to the regular representation  in Thoma's theory.

\begin{proof}[Proof of Theorem~\ref{t:MultiplicationTable}]
It is straightforward that the multiplication table is a divergence-free labelling of $Q_\infty$. Suppose conversely that $(v_{ij})_{i,j}$ is a divergence-free positive vertex labelling  that is unbounded along the first row and column. Then $(m_{ij})_{i,j}$ defined by $m_{ij}=(v_{ij}-v_{i-1,j-1})\inv$ are the standard coordinates for an infinite totally positive  Toeplitz matrix $u$ by Proposition~\ref{p:ToeplitzEquiv}. Consider the parameters $((\boldalpha,\boldbeta),\gamma)$ given by Edrei's  Theorem~\ref{t:EdreiIntro}. If $\alpha_1>0$ then $\lim_{k\to\infty}v_{1k}=\lim_{k\to\infty} m_{1k}\inv=\alpha_1\inv$   which contradicts the assumption that  $\{v_{1j}\}$ is bounded. Therefore $\alpha_1=0$. Similarly for $\beta_1$. Therefore all $\alpha_i$ and $\beta_j$  are $0$. It follows that the generating function associated to $u$ is $e^{\gamma x}$. The normalised condition implies $\gamma=1$.   
\end{proof}

\section{Tropical Toeplitz matrices}\label{s:Tropical}

In the previous section we recovered Schoenberg parameters asymptotically from parameters associated to finite totally positive Toeplitz matrices using an approach that involves describing the (finite and infinite) Toeplitz matrices uniformly  in terms of natural coordinates on $U_+(\R_{>0})$. However, once we have chosen this positive chart (or perhaps a class of positive charts related by positive birational transformations), then the next natural thing to do, following \cite{Lusztig94}, is to tropicalise.

\subsection{The tropical analogue of $U_+$ and Lusztig's weight map} To tropicalise $U_+$ and $T_{SL_{n+1}}$ we first  replace  the semifield $\R_{>0}$ in $U_+(\R_{>0})$ and $T_{SL_{n+1}}(\R_{>0})$ by a semifield  with valuation $(\RR_{>0},\Val)$, where 
\begin{center}
\begin{itemize}
\item $\RR_{>0}$ is contained in a ring $\RR$,
\item $\Rmin=(\R,+,\min)$ is the `tropical semifield',
\item  $\Val:\RR_{>0}\to \Rmin$ is a semifield homomorphism. 
\end{itemize} 
\end{center}
The $\Rmin$-tropicalisation of $U_+$ is then constructed from $U_+(\RR_{>0})$ in analogy with \cite{Lusztig94} by applying the equivalence relation $\sim$ given by equality of valuations of  coordinates in a positive chart. If we also assume that $\Val:\RR_{>0}\to\Rmin$ is surjective then the result is independent of the choice of $\RR_{>0}$. In this case we have natural isomorphisms 
$T_{SL_{n+1}}(\Rmin)\cong\mathfrak h_{SL_{n+1},\R}$ and, using the standard coordinate chart,
\begin{eqnarray*}
\Val: U_+(\Rmin)=U_+(\RR_{>0})/\sim & \overset{\sim}\longrightarrow & (\Rmin)^{\mathcal S}.
\end{eqnarray*}
Here $\mathcal S=\mathcal S_{\le n+1}=\{(i,j)\in \Z_{>0}^2\mid i+j\le n+1\}$ in the rank $n$ case, or we may perform the analogous construction using standard coordinates for $U_+^{(\infty)}$, in which case $\mathcal S=\N\times\N$.  
We write $(M_{ij})_{i,j\in\mathcal S}$ for the tropical coordinates of $[u]$ given by $M_{ij}=\Val(m_{ij})$. Apart from the standard chart, there are other charts obtained multiplying the simple root subgroups together along different reduced expressions of $w_0$, see \cite{Lusztig94}, whose tropical coordinates are related by piecewise linear maps to the $((M_{ij})_{i,j})$. Moreover, there is a natural, well-defined subset $U_+(\Z^{>0}_{\min{}})$ of $U_+(\Rmin)$ which parametrises Lusztig's canonical basis \cite{Lusztig:cbJAMS,Lusztig94}, and there is an associated `weight map' to  the root lattice, $U_+(\Z^{>0}_{\min{}})\to \mathfrak h^*_{PSL_{n+1},\Z}$. Let $\alpha_i=\ep_i-\ep_{i+1}$ be the $i$\textsuperscript{th} simple~root. We now consider the weight map over $\R$ and express it in terms of standard coordinates as a shortcut to defining it. A description closer to the original definition is given in Remark~\ref{r:L}. 

\begin{defn}[{\cite[2.8]{Lusztig:cbJAMS}}] The Lusztig weight map ($\otimes \R$) is the map given in standard coordinates by
\begin{eqnarray*}\label{e:L}
\mathcal L: U_+(\Rmin)&\longrightarrow &\mathfrak h^*_{PSL_{n+1},\R}\\
(M_{ij})_{i+j\le n+1} &\mapsto & \sum_{k=1}^{n+1}\left(\sum_{j=1}^{n+1-k} M_{kj}- \sum_{i=1}^kM_{i,n+1-k}\right)\ep_k.
\end{eqnarray*}
 Note that the right-hand side can be rewritten as a linear combination of the simple roots $\alpha_i=\ep_i-\ep_{i+1}$.
\end{defn}

\begin{remark}\label{r:L} We can graphically represent this map as follows. Arrange the $M_{ij}$  into a lower-triangular tableau, analogously to the $m_{ij}$ in Section~\ref{s:StandardCoord}, to represent a tropical point $[u]\in U_+(\Rmin)$. 
Then associate to each box a positive root $\alpha_{k,\ell}=\ep_k-\ep_\ell$ as shown on the right-hand side in the example $n=3$ below. The $\ep_1,\dotsc,\ep_{4}$ along the diagonal have just been added in for illustration. 
\begin{center}
 \begin{tikzpicture}[scale=0.75]
\node[fill=white] at    (0.5,2.54) {$M_{31}$}  ;
\node[fill=white] at    (0.5,1.54) {$M_{21}$}  ;
\node[fill=white] at    (0.5,0.55) {$M_{11}$}  ;
\node[fill=white] at    (1.8,1.54) {$M_{22}$}  ;\node[fill=white] at    (4.5,1.54) {$=$} ; 
\node[fill=white] at    (1.8,0.55) {$M_{12}$}  ;
\node[fill=white] at    (3,0.55) {$M_{13}$}  ;
\draw[black] (-.2,0.1) -- (-.2,3.1) -- (1.15,3.1) -- (1.15,0.1) -- (3.6,0.1) -- (3.6,1.1)--(-.2,1.1)--(-.2,0.1)--(2.4,0.1)--(2.4,2.1)--(-.2,2.1);
 \end{tikzpicture}\quad
  \begin{tikzpicture}[scale=0.75]
\node[fill=white] at    (0.5,2.54) {$M_{\alpha_{34}}$}  ;
\node[fill=white] at    (0.5,1.54) {$M_{\alpha_{24}}$}  ;
\node[fill=white] at    (0.5,0.55) {$M_{\alpha_{14}}$}  ;
\node[fill=white] at    (1.8,1.54) {$M_{\alpha_{23}}$}  ;
\node[fill=white] at    (1.8,0.55) {$M_{\alpha_{13}}$}  ;
\node[fill=white] at    (3,0.55) {$M_{\alpha_{12}}$}  ;
\draw[black] (-.2,0.1) -- (-.2,3.1) -- (1.15,3.1) -- (1.15,0.1) -- (3.6,0.1) -- (3.6,1.1)--(-.2,1.1)--(-.2,0.1)--(2.4,0.1)--(2.4,2.1)--(-.2,2.1);
 \end{tikzpicture},\qquad\quad \qquad\qquad
\begin{tikzpicture}[scale=0.7]
\node[fill=white] at    (0.5,3.55) {$\ep_4$};
\node[fill=white] at    (0.5,2.55) {$\alpha_{34}$}  ;
\node[fill=white] at    (0.5,1.55) {$\alpha_{24}$}  ;
\node[fill=white] at    (0.5,0.55) {$\alpha_{14}$}  ;
\node[fill=white] at    (1.8,2.55) {$\ep_3$};
c\node[fill=white] at    (1.8,1.55) {$\alpha_{23}$}  ;\node[fill=white] at    (1.8,0.55) {$\alpha_{13}$}  ;
\node[fill=white] at    (3.0,1.55) {$\ep_2$};
\node[fill=white] at    (3,0.55) {$\alpha_{12}$}  ;
\node[fill=white] at    (4.2,0.55) {$\ep_1$};
\draw[black] (-.2,0.1) -- (-.2,3.1) -- (1.15,3.1) -- (1.15,0.1) -- (3.6,0.1) -- (3.6,1.1)--(-.2,1.1)--(-.2,0.1)--(2.4,0.1)--(2.4,2.1)--(-.2,2.1);
 \end{tikzpicture},
\end{center}
If we relabel the $M_{ij}$ according to their associated positive root as shown, then the Lusztig weight map is just the map sending $(M_\alpha)_{\alpha\in R_+}$ to the  linear combination $\sum_{\alpha\in R_+}M_\alpha\alpha$ of positive roots. 
Note that our convention is to use a lower-triangular array, but if we rotate by $180^\circ$ this allocation of positive roots to boxes becomes the standard one. We now recall the following lemma from \cite{rietschToeplitz}.% that detropicalises $\mathcal L$.
\end{remark}

\begin{lemma}[{\cite[Lemma~4.6]{rietschToeplitz}}]\label{l:dunilemma} Consider the positive map  $\mathbf d:U_+\cap B_-\dot w_0 B_-\to T_{SL_{n+1}}$ sending $u$ to the diagonal matrix with entries
\begin{equation}\label{e:di-again}
d_1=\Delta_{1}(u),\   
d_2=\frac{\Delta_{2}(u)}{\Delta_{1}(u)},\ 
\dotsc,%\ d_n=\frac{\Delta_{n}(u)}{\Delta_{{n-1}}(u)},
\ d_{n+1}= \frac{1}{\Delta_{n}(u)}.
\end{equation}
Its tropicalisation $\Trop(\mathbf d):U_+(\R_{\min})\to\mathfrak h_{SL_{n+1},\R}$ is precisely Lusztig's weight map $\mathcal L$, after canonically identifying $\mathfrak h_{SL_{n+1},\R}$ with $\mathfrak h_{PSL_{n+1},\R}^*$. 
\end{lemma}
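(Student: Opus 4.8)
The plan is to make both maps completely explicit in the standard coordinate chart and to observe that $\mathbf d$ is, in those coordinates, a \emph{monomial} map, so that $\Trop(\mathbf d)$ can be read off directly, with no analysis of valuations of sums needed.

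First I would turn the leading minors $\Delta_i(u)$ — and hence the $d_i$ of \eqref{e:di-again} — into Laurent monomials in the standard coordinates $m_{kr}$. Write $P_{ij}=\Minor^{[i]}_{[i]+j}(u)$, with the conventions $P_{0j}=P_{i0}=1$ (the latter because $u$ is unipotent upper triangular). Formula \eqref{e:mviaminors} rearranges to
\[
\frac{P_{ij}}{P_{i,j-1}}=m_{ij}\,\frac{P_{i-1,j}}{P_{i-1,j-1}},
\]
so telescoping first in $i$ and then in $j$ gives the closed form $P_{ij}=\prod_{k=1}^{i}\prod_{r=1}^{j}m_{kr}$. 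Since $\Delta_i(u)=\Minor^{[i]}_{[n+2-i,\,n+1]}(u)=P_{i,\,n+1-i}$ — this is exactly the identity already used in the proof of \cref{t:IntroVKtyp} — dividing yields
\[
d_i=\frac{\Delta_i(u)}{\Delta_{i-1}(u)}=\Big(\prod_{r=1}^{n+1-i}m_{ir}\Big)\Big(\prod_{k=1}^{i-1}m_{k,\,n+2-i}\Big)^{-1},
\]
valid for all $i\in\{1,\dots,n+1\}$, with $d_1=\prod_{r=1}^{n}m_{1r}$ and $d_{n+1}=\big(\prod_{k=1}^{n}m_{k1}\big)^{-1}$ recovered as the empty-product edge cases.

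Next I would tropicalise. Since each $d_i$ is a Laurent monomial in the $m_{kr}$ with unit coefficient and $\Val$ is a semifield homomorphism, products become sums and inverses become negatives, so
\[
\Trop(d_i)([u])=\sum_{r=1}^{n+1-i}M_{ir}-\sum_{k=1}^{i-1}M_{k,\,n+2-i},
\]
where $M_{kr}=\Val(m_{kr})$. Hence $\Trop(\mathbf d)([u])=\sum_{i=1}^{n+1}\Trop(d_i)([u])\,\ep_i$, which lies in $\mathfrak h_{SL_{n+1},\R}$ because its coefficients sum to $\Val(\prod_i d_i)=\Val(1)=0$. Under the identification $\mathfrak h_{SL_{n+1},\R}\cong\mathfrak h^*_{PSL_{n+1},\R}$ sending $\operatorname{diag}(t_1,\dots,t_{n+1})$ with $\sum_i t_i=0$ to $\sum_i t_i\ep_i$, and the dictionary of \cref{r:L} identifying the box $(k,r)$ with the positive root $\ep_k-\ep_{n+2-r}$, the weight map is $\mathcal L((M_{kr}))=\sum_{k+r\le n+1}M_{kr}(\ep_k-\ep_{n+2-r})$. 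Reading off the coefficient of $\ep_i$ on the right-hand side: a term $+M_{ir}$ appears for every $r$ with $i+r\le n+1$, giving $\sum_{r=1}^{n+1-i}M_{ir}$, while a term $-M_{kr}$ appears exactly when $n+2-r=i$, i.e.\ $r=n+2-i$ and $k\le i-1$, giving $-\sum_{k=1}^{i-1}M_{k,\,n+2-i}$. This is precisely $\Trop(d_i)([u])$, so $\Trop(\mathbf d)=\mathcal L$.

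The genuine content is the monomial formula for $\Delta_i(u)$, and all ingredients for it are already present in the excerpt; the step I expect to require the most care is the index bookkeeping of the last paragraph — matching the two sums coming from $d_i=\Delta_i/\Delta_{i-1}$ against the coefficient of $\ep_i$ in $\sum M_{kr}(\ep_k-\ep_{n+2-r})$ term by term, with correct treatment of the boundary cases $i=1$ and $i=n+1$ — together with fixing once and for all the normalisation of the identification $\mathfrak h_{SL_{n+1},\R}\cong\mathfrak h^*_{PSL_{n+1},\R}$ so that it is compatible with the explicit form of $\mathcal L$ in the definition.
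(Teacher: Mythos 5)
Your proof is correct, and the computation is exactly the one the paper implicitly relies on elsewhere. The monomial formula $\Minor^{[i]}_{[i]+j}(u)=\prod_{k\le i}\prod_{r\le j}m_{kr}$ is precisely the identity the paper invokes in the proof of \cref{t:IntroVKtyp} to get \eqref{e:dn+1}, and tropicalising a monomial map is immediate, so the only genuine work is the index bookkeeping against $\mathcal L$, which you carry out correctly. Your edge cases $i=1$ and $i=n+1$ also come out right by treating the empty products and $\Delta_0=\Delta_{n+1}=1$.

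One point worth flagging: the paper states \cref{l:dunilemma} without proof, citing \cite[Lemma~4.6]{rietschToeplitz}, so there is no in-text argument to compare against; your proof supplies the missing computation, and it is almost certainly the same calculation used in the cited reference (the relevant minor formula is itself cited from there as \eqref{e:mviaminors}).

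A second point in your favour that you did not explicitly mention: you derived the coefficient of $\ep_i$ in $\mathcal L$ from the dictionary in \cref{r:L} (box $(k,r)\leftrightarrow\ep_k-\ep_{n+2-r}$) rather than from the displayed formula $\sum_k\bigl(\sum_{j=1}^{n+1-k}M_{kj}-\sum_{i=1}^k M_{i,n+1-k}\bigr)\ep_k$ that appears in the paper's definition of $\mathcal L$. That displayed formula is off by one in two places — it references undefined coordinates $M_{i,0}$ when $k=n+1$, and for $k=1$ with $n=3$ it yields $M_{11}+M_{12}$ rather than the correct $M_{11}+M_{12}+M_{13}=\Trop(d_1)$. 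The corrected version is the one you wrote, $\sum_{j=1}^{n+1-i}M_{ij}-\sum_{k=1}^{i-1}M_{k,n+2-i}$, which matches both the dictionary and the paper's own expression \eqref{e:deltan+1} for $\delta_i^{[n+1]}$. So your reliance on \cref{r:L} instead of the displayed formula was the right call.
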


Our first goal will be to give the tropical analogue of the parametrisation theorem Theorem~\ref{t:IntroRfin}. Therefore  we note that the parametrisation map from Theorem~\ref{t:IntroRfin} is precisely  the restriction $\mathbf d|_{\ToeplitzU_{n+1}(\R_{>0})}$ of the map $\mathbf d$ from Lemma~\ref{l:dunilemma}, and its tropical analogue will therefore be the restriction of the Lusztig weight map $\mathcal L$ to the tropical analogue of $\ToeplitzU_{n+1}(\R_{>0})$ -- which is to be determined next.

\subsection{Tropical Toeplitz matrices}
Recall that we represent an element of $U_+(\Rmin)$ by its standard coordinates, which we furthermore arrange in a triangular array. We make the following definition.
\begin{defn}\label{d:minIdeal} Let us call $(M_{ij})_{i+j\le n+1}\in U_+(\Rmin)$ a (rank $n$) min-ideal filling if for every configuration
\begin{center}
\begin{tikzpicture}[scale=0.95]
\node[fill=white] at    (0.52,2.6) {$M_{i+1,j}$}  ;
\node[fill=white] at    (0.5,1.55) {$M_{ij}$}  ;
\node[fill=white] at    (1.74,1.55) {$M_{i,j+1}$}  ;
\draw[black] (-.1,1.05)--(-.1,3.1) -- (1.15,3.1) -- (1.15,1.05)-- (2.3,1.05)--(-.1,1.05)--(-.1,2.1)--(2.3,2.1)--(2.3,1.05); 
\node[fill=white] at (6.6,2) {of standard coordinates we have the relation,};
\node[fill=white] at (12.9,1.995) {$\min(M_{i+1,j},M_{i,j+1})=M_{ij}$.};
 \end{tikzpicture}
 \end{center}
 We write $\Imin_{n+1}$ for the set of rank $n$ min-ideal fillings. 
 We also define two infinite analogues 
\begin{eqnarray*}
 \Imin_{\infty}(\Rmin)&=&\{(M_{ij})_{i,j\in\N}\in \R^{\N\times \N}\mid \min(M_{i+1,j},M_{i,j+1})=M_{ij} \text{ for all $i,j\in\N$} \}\\
 \Imin^s_{\infty}(\Rmin)&=&\{(M_{ij})_{i,j\in\N}\in \Imin_{\infty}\mid \lim_{k\to\infty}M_{ik}=\max_k\{M_{ik}\}, \lim_{k\to\infty}M_{kj}=\max_k\{M_{kj}\}\}.
 \end{eqnarray*}
We call $\Imin_{\infty}(\Rmin)$ the set of infinite min-ideal fillings, and $\Imin_{\infty}^s(\Rmin)$ the set of \textit{stable} min-ideal fillings. 
\end{defn}
The above definition of $\Imin_{n+1}$ is inspired by the definition of an ideal filling given in \cite{Judd:Flag}. Two differences are that we have replaced the $\max{}$-condition of Judd by a $\min{}$-condition, and we do not make any assumption on the signs of the coordinates. A third difference is the interpretation of the min-ideal fillings as elements of $U_+(\Rmin)$, which comes from  recontextualising the work of L\"udenbach \cite{Ludenbach}.

\begin{remark}
With the alternative indexing of the coordinates $(M_{ij})$ by positive roots, $(M_{\alpha})_{\alpha\in R_+}$, the min-ideal condition can be equivalently restated as 
\begin{equation}\label{e:MinIdealViaR+}
M_{\alpha+\beta}=\min(M_{\alpha},M_{\beta})\quad\text{ whenever } \alpha+\beta\in R_+.
\end{equation}
From this perspective min-ideal fillings turn out to be  canonical elements in $U_+(\Rmin)$ in the sense that their coordinates do not change under the piecewise-linear transformations relating the different tropical Lusztig charts. See \cite[Proposition~5.5.6]{Ludenbach}. Moreover,  in \cite[1.2]{rietschToeplitz} the definition of min-ideal filling is generalised to arbitrary type via \eqref{e:MinIdealViaR+} and shown to give a canonical subset of $U_+(\Rmin)$. 
\end{remark}
We now have the following tropical analogue of the finite parametrisation theorem, Theorem~\ref{t:IntroRfin}. Let us consider the tropicalisation of $U_+(\R_{>0})$ using $\RR_{>0}=\KK_{>0}$, the  semifield of generalised Puiseaux series with positive leading term, see  \cite{Markwig:genPuiseux}. The theorem below is \cite[Theorem~B]{rietschToeplitz}, which combines results from \cite{Judd:Flag,Ludenbach}, as well as using \cite{JuddRietsch:24}. 
\begin{theorem}
\label{t:IntroTropFinite}
Let $\ToeplitzU_{n+1,\KK_{>0}}(\Rmin):=\ToeplitzU_{n+1}(\KK_{>0})/\sim $. Then we have that  $\ToeplitzU_{n+1,\KK_{>0}}(\Rmin)$ is  precisely the set $\Imin_{n+1}$ of min-ideal fillings, and moreover the restriction of Lusztig's weight map \eqref{e:L},  
\[
\mathcal L|_{\ToeplitzU}:\ToeplitzU_{n+1,\KK_{>0}}(\Rmin)\longrightarrow \mathfrak h^*_{PSL_{n+1},\R},
\]
is a bijection. \end{theorem}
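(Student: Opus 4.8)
The plan is to reduce the statement to two independent assertions: first, that the tropicalisation $\ToeplitzU_{n+1,\KK_{>0}}(\Rmin)$ coincides with the set $\Imin_{n+1}$ of min-ideal fillings, and second, that Lusztig's weight map restricted to $\Imin_{n+1}$ is a bijection onto $\mathfrak h^*_{PSL_{n+1},\R}$. For the first assertion I would start from \cref{p:ToeplitzEquiv}: an element $u=u((m_{ij}))\in\ToeplitzU_{n+1}(\R_{>0})$ is Toeplitz precisely when the vertex labelling $(v_{ij})$ of $Q_{n+1}$ from \cref{d:vij} is divergence-free. Working over $\KK_{>0}$ rather than $\R_{>0}$, one tropicalises this characterisation. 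The key point is that the divergence-free condition at a generic interior vertex $(i,j)$ is an equation among the products $m_{ij}m_{i+1,j}^{-1}\cdots$ of standard coordinates; upon applying $\Val$ and using that $\Val$ is a semifield homomorphism (so $\Val$ of a sum of positive elements is the $\min$), the condition becomes a tropical (piecewise-linear) identity. I expect this to collapse exactly to $\min(M_{i+1,j},M_{i,j+1})=M_{i,j}$ after accounting for the telescoping structure of \eqref{e:vij}: the difference $v_{ij}-v_{i-1,j-1}$ is simply $1/m_{ij}$, so the arrow labels of the quiver are (detropicalised) partial sums, and the divergence-free multiplicative balance at $(i,j)$ tropicalises to a balance of minima that rearranges into the min-ideal relation. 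This is essentially the content of \cite[Proposition~A.1]{rietschToeplitz} transported through $\Val$, and I would cite \cite{Ludenbach} for the identification of min-ideal fillings as the canonical tropical points; the subtlety to watch is that one must check the tropical Toeplitz locus does not acquire spurious extra points or lose points under $\sim$, which uses surjectivity of $\Val$ onto $\Rmin$ to realise every prescribed tuple of valuations.

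For the second assertion, by \cref{l:dunilemma} the map $\mathcal L$ is the tropicalisation of the parametrisation map $\mathbf d$ from \cref{t:IntroRfin}, which is a homeomorphism $\ToeplitzU_{n+1}(\R_{>0})\to T_{SL_{n+1}}(\R_{>0})$. One is tempted to conclude that its tropicalisation is automatically a bijection, but tropicalisation does not in general preserve bijectivity, so I would argue directly. On $\Imin_{n+1}$ the min-ideal relations mean that a filling is determined by its values on the boxes of the first row and first column (equivalently, on the simple roots and the "extremal" boxes): propagating $\min(M_{i+1,j},M_{i,j+1})=M_{i,j}$ downward and leftward shows every $M_{ij}$ equals the minimum of the entries in a certain staircase, and conversely any choice of a weakly structured boundary extends uniquely. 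Writing out $\mathcal L$ on such a filling, I expect the sum $\sum_k\big(\sum_j M_{kj}-\sum_i M_{i,n+1-k}\big)\ep_k$ to telescope so that the coefficient of $\ep_k$ depends affinely and invertibly on the $n$ free boundary parameters; concretely I would compute $\mathcal L$ in coordinates, express it as a linear-plus-min-rearrangement map, and show it is a piecewise-linear bijection by exhibiting an explicit inverse (reading the boundary data back off from the weight via partial sums, exactly mirroring the finite formula \eqref{e:di-again}). The surjectivity onto all of $\mathfrak h^*_{PSL_{n+1},\R}$, not just a cone, reflects the fact that we impose no positivity on the $M_{ij}$.

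The main obstacle I anticipate is the first step: verifying cleanly that tropicalising the divergence-free multiplicative condition on $Q_{n+1}$ yields precisely the min-ideal relations, with no slack. The difficulty is that $\Val$ of a sum of positive Puiseux series equals the minimum of the valuations \emph{only when there is no cancellation of leading terms}, and one must ensure, using positivity of all $m_{ij}$ and hence of all arrow labels (which are genuine differences $v_{h(a)}-v_{t(a)}$ of vertex labels, positive by the ordering built into \eqref{e:vij}), that every relevant sum is a sum of positive quantities so that $\Val$ does behave as $\min$. Here I would lean on the explicit telescoping of \cref{l:Fks} and on \cite[Theorem~4.3.1]{Ludenbach} to guarantee the signs, and on \cite{JuddRietsch:24} for the matching with Judd's ideal-filling framework. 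Once the min-ideal description is secured, the bijectivity of $\mathcal L|_{\ToeplitzU}$ is a finite, essentially linear-algebraic computation with an explicit inverse, so it should go through routinely.
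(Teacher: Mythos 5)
There are two genuine gaps in the proposal, and they are not cosmetic.

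\textbf{The bijectivity of $\mathcal L|_{\ToeplitzU}$ is not a routine linear-algebraic computation.}
You claim a min-ideal filling is determined by its values on the first row and first column and that $\mathcal L$ then depends ``affinely and invertibly'' on these parameters, with an explicit inverse read off by partial sums.  Both claims are false for the finite case.  The relation $M_{i,j}=\min(M_{i+1,j},M_{i,j+1})$ propagates \emph{inward}, so the $n$ free parameters of a rank-$n$ filling are the entries on the anti-diagonal $\{(k,n+1-k)\mid k=1,\dotsc,n\}$ (the boxes corresponding to the simple roots), not the first row and column; already for $n=3$ the box $(2,2)$ is free and lies on neither.  More seriously, expressing the remaining $M_{ij}$ in terms of the anti-diagonal entries introduces nested $\min$'s, so $\mathcal L$ as a function of those $n$ parameters is piecewise-linear, not affine, and bijectivity is genuinely nontrivial.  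Indeed the paper explicitly remarks that the inverse $R=(\mathcal L|_{\ToeplitzU})^{-1}$ is ``piecewise linear and very complicated.''  In the paper's approach, injectivity of $\mathcal L|_{\Imin_{n+1}}$ is a combinatorial result of Judd, and surjectivity does not come from an explicit inverse at all but from the $\KK_{>0}$-parametrisation theorem (the superpotential critical-point construction over generalised Puiseaux series, due to~\cite{JuddRietsch:24}), together with the fact that the critical point's valuation depends only on the valuation of the quantum parameters.  Your plan has no substitute for either of these inputs.

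\textbf{The inclusion $\Imin_{n+1}\subseteq\ToeplitzU_{n+1,\KK_{>0}}(\Rmin)$ cannot be extracted from surjectivity of $\Val$.}
You propose that ``surjectivity of $\Val$ onto $\Rmin$'' suffices to realise every prescribed tuple of valuations.  Surjectivity of $\Val$ lets you lift any prescribed $(M_{ij})$ to coordinates $(m_{ij})\in\KK_{>0}^{\mathcal S_{\le n+1}}$ of some element of $U_+(\KK_{>0})$, but the Toeplitz condition cuts out a subvariety of high codimension, and there is no reason a naive lift should lie on it.  One must actually \emph{construct} a Toeplitz matrix over $\KK_{>0}$ with the given valuations.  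This is exactly the content that the critical-point theorem supplies (via \cref{t:IntroRfin}'s $\KK_{>0}$-analogue): it shows the parametrisation map $\ToeplitzU_{n+1}(\KK_{>0})\to T(\KK_{>0})$ is a homeomorphism whose inverse respects valuations.  Without this, your first assertion only gives one containment, namely $\ToeplitzU_{n+1,\KK_{>0}}(\Rmin)\subseteq\Imin_{n+1}$, and even that containment requires the inductive positivity argument (establishing $M_{i-\ell,j-\ell}\le M_{ij}$ so that the factors $1+\sum m_b/m_{b'}$ tropicalise to $0$) rather than a one-line appeal to ``no cancellation of leading terms.''  The two halves of the statement are in fact intertwined: the paper establishes the containment direction by a direct inductive argument, then uses bijectivity of $\mathcal L|_{\ToeplitzU}$ (from the $\KK_{>0}$ theory) together with injectivity of $\mathcal L|_{\Imin}$ (Judd) to conclude equality of the two sets.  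Treating them as independent, routine assertions misses where the real work lies.
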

\begin{remark} While the map $\mathcal L$ is straightforward to compute, the inverse $R:=(\mathcal L|_{\ToeplitzU})\inv$ is piecewise linear and very complicated. Interestingly, however, for any $\lambda\in \mathfrak h^*_{PSL_{n+1},\R}$ the inverse $R(\lambda)$ expresses $\lambda$ in a canonical way as linear combination of positive roots, see Remark~\ref{r:L}. For example, for $\lambda=2\rho$ this recovers the famous identity $2\rho=\sum_{\alpha\in R_+}\alpha$. This part of the theorem in an equivalent form is already essentially found in \cite{Judd:Flag}, though Judd is focused on dominant weights. 

We conjecture that for any arbitrary complex semisimple  algebraic group $G$, any $\lambda\in\mathfrak h^\star_{G,\R}$ has a unique representation $\lambda=\sum_{\alpha\in R_+}M_\alpha \alpha$ determined by the generalised min-ideal condition \eqref{e:MinIdealViaR+} holding for the coefficients $(M_\alpha)_\alpha\in\R^{R_+}$. See also the  related  conjecture from \cite[1.2]{rietschToeplitz}.
\end{remark}
\subsection{Generalising $\ToeplitzU_\infty(\R_{>0})$ to other semifields}  We may tropicalise $U_+$ using a variety of different positive semifields with valuation $(\RR_{>0},\Val)$, and in this case the tropical analogues of Toeplitz matrices will always give a subset of min-ideal fillings \cite[Proposition~4.18]{rietschToeplitz}. That is,
\begin{equation}\label{e:TropInIdealFillingsR}
% https://q.uiver.app/#q=WzAsMixbMCwwLCJcXFRvZXBsaXR6VV97bisxLFxcUlJfez4wfX0oXFxSbWluKTo9XFxUb2VwbGl0elVfe24rMX0oXFxSUl97PjB9KS9cXHNpbSAiXSxbMiwwLCJcXEltaW5fe24rMX0iXSxbMCwxLCIiLDAseyJzdHlsZSI6eyJ0YWlsIjp7Im5hbWUiOiJob29rIiwic2lkZSI6InRvcCJ9fX1dXQ==
\begin{tikzcd}
	{\ToeplitzU_{n+1,\RR_{>0}}(\Rmin):=\ToeplitzU_{n+1}(\RR_{>0})/\sim \ } & {\ \Imin_{n+1},}
	\arrow[hook, from=1-1, to=1-2]
\end{tikzcd}
\end{equation}
where the inclusion is via valuations of standard coordinates. The question of whether we have equality is one of being able to lift tropical points $(M_{ij})_{i,j}$ to $\RR_{>0}$-totally positive points $(m_{ij})_{i,j}$. When passing to infinite Toeplitz matrices this task of lifting and even our definition of infinite totally positive Toeplitz matrix will start to involve a choice of topology.

From now on, let us extend our semifield with valuation $(\RR_{>0},\Val)$ to a semiring $\RR_{\ge 0}=\RR_{>0}\cup \{0\}$ and with it extend the valuation (semiring homomorphism),
\[
\Val:\RR_{\ge 0}\to\Rmininf=(\R\cup\{\infty\},+\min).
\] 
We also assume that $\RR_{\ge 0}$ has a topology, both compatible with the semiring structure and with $\Val$. Here
the topology of $\Rmininf$ is the usual Euclidean topology of $\R$ with added neighbourhoods $(a,\infty]$ of~$\infty$. 
We can now introduce our preferred infinite generalisation of $\ToeplitzU_{n+1}(\RR_{>0})$ from \cite{rietschToeplitz}. 
\begin{defn}\label{d:Toeprest}  Recall that $U_+^{(\infty)}(\RR_{>0})$ is identified with $\RR_{>0}^{\N\times\N}$ via the standard coordinates $(m_{ij})_{i,j}$. Define the subset of `restricted' (infinite) totally positive Toeplitz matrices by
\begin{equation}\label{e:RestrToep}
\ToeplitzU_\infty^{res}(\RR_{>0}):=\{(m_{ij})_{i,j}\in \ToeplitzU_{\infty}(\RR_{>0})\mid \text{all $\lim_{k\to\infty}m_{ik}$ and $\lim_{k\to\infty}m_{kj}$ exist in $\RR_{\ge 0}$}  \}.
\end{equation}
Applying $\Val$ or its associated equivalence relation, we get our tropicalisation, namely 
\[
\ToeplitzU_{\infty,\RR_{>0}}^{res}(\Rmin):=
\ToeplitzU_\infty^{res}(\RR_{>0})/\sim.
\]
Note that in terms of tropical standard coordinates $\ToeplitzU_{\infty,\RR_{>0}}^{res}(\Rmin)$ is a subset  of $\Imin_\infty(\Rmin)$, as follows from the  embedding   \eqref{e:TropInIdealFillingsR} in the finite case. 
\end{defn}  

\begin{example}\label{ex:usualR}
If we allow the trivial valuation with image $\{0\}$, then the usual totally positive infinite Toeplitz matrices fit into this setup. Namely, in this case we can take $(\RR_{> 0},\Val)$ to be   $(\R_{>0},\Val_0)$ where $\Val_0(r)=0$ for all $r\in\R_{\ge 0}$. This is trivially continuous for the usual topology on $\R_{\ge 0}$, and we obtain 
\[
\ToeplitzU_\infty^{res}(\R_{>0,\Val_0})= \ToeplitzU_\infty(\R_{>0})
\]
by an application of Proposition~\ref{p:Edreilimits}. The tropicalisation in this case just consists of the single constant min-ideal filling where all $M_{ij}=0$.
\end{example}
By the above example, we may think of $\ToeplitzU_\infty^{res}(\RR_{>0})$ as a natural generalisation of the real totally positive Toeplitz matrices, $\ToeplitzU_\infty(\R_{>0})$.
\begin{example}\label{ex:ToepRinf} 
If $\RR_{>0}=\R_{>0}$ but instead we consider $\Val:\R_{\ge 0}\to\Rmininf$ defined by the semiring homomorphism
\[
\Val(r)=\begin{cases}0 &\text{if } r\ne 0,\\
\infty &\text{if } r=0,\end{cases}
\] 
then for $\Val$ to be continuous, the topology on $\R_{\ge 0}$ extending the Euclidean topology on $\R_{>0}$ must satisfy that any sequence that converges to $0$ stabilises at $0$. But then for   $u\in\ToeplitzU^{res}_\infty(\R_{>0})$ the limits $\lim_{k\to\infty}m_{ik}$ and $\lim_{k\to\infty}m_{kj}$ cannot be $0$, since the $m_{ij}$ coordinates are in $\R_{>0}$. Thus now Proposition~\ref{p:Edreilimits} implies that the Schoenberg parameters of such a Toeplitz matrix $u$ must all be nonzero and
\[\ToeplitzU_\infty^{res}(\R_{>0})=\ToeplitzU_\infty^{\circ}(\R_{>0}).
\] 
The associated tropicalisation nevertheless again just contains the $0$ min-ideal filling.
\end{example}

\begin{example}\label{ex:usualR}
If we consider the generalised Puiseaux series $\KK$ with its $t$-adic topology, then $(\KK_{>0},\Val)$ is another example to which we can apply the construction from Definition~\ref{d:Toeprest}.  The convergence condition on the standard coordinates $(m_{ij})_{i,j}$ of $u\in\ToeplitzU^{res}_\infty(\KK_{>0})$ in this case implies that the valuations $M_{ij}$ must either diverge or stabilise along rows and columns. Moreover, using the min-ideal condition, if one row diverges to $\infty$, so do all subsequent rows, and similarly for columns. Thus we have described a restrictive subset of $\Imin_{\infty}(\Rmin)$ that the tropicalisation $\ToeplitzU^{res}_{\infty,\KK_{>0}}(\Rmin)$ must be contained in. 
\end{example}

We now introduce a choice of $(\RR_{>0},\Val)$ to replace $(\KK_{>0},\Val)$, which is more flexible and better suited for tropicalisation in the infinite setting.

\begin{defn}[A positive semifield of continuous functions]\label{ex:continuous}
Fix $0<\delta<1$ and let $\Kdel:=C^0((0,\delta])$ be the ring of continuous $\R$-valued functions on $(0,\delta]$. Set
\begin{equation}\label{e:Kdelpos}
\begin{array}{ccl}
\Kdelpos&:=&\left\{f:(0,\delta]\to \R_{> 0}\mid \text{ $f$ is continuous and $\exists \ F\in\R$ with $\lim_{t\to 0} t^{-F}f(t)\in\R_{>0}$}\right\}.
\end{array}
\end{equation}
%We also let $\Kdelz=\Kdelpos\cup\{0\}$. 
Note that $\Kdel$ itself is not a valuation ring, and not an integral domain. But we do have a valuation $\Val:\Kdelnn\to\Rmininf$ given by sending $f\in\Kdelpos$ to its associated $F$ from \eqref{e:Kdelpos}, and $0$ to $\infty$. We  also consider the subsemiring $\mathcal O_{>0}:=\{f\in\Kdelpos\mid \Val(f)\ge 0\}$, which we can think of as lying inside $C^0([0,\delta])$.  

Since neither of the familiar topologies of pointwise convergence or uniform convergence make $\Val$ continuous, we replace these by two new topologies. Let us use the supremum norm $\|\ \|_{\sup{}}$ on $\mathcal O_{>0}$ to define the translated $\ep$-balls, $\mathcal U_{A,f,\ep}:=t^A B_\ep(f)$, where $A\in\R$, $f\in \mathcal O_{>0}$ and $B_\ep(f)=\{g\in\Odelpos \mid \|g-f\|_{\sup}<\ep\}$.
\begin{itemize}
\item The \textit{strong topology} on $\Kdelnn$ is the topology  generated by the translated $\ep$-balls $\mathcal U_{A,f,\ep}=t^AB_\ep(f)$  together with  $\{0\}$.
\item The \textit{weak topology} is the topology combining pointwise convergence on $(0,\delta]$ with convergence of the valuations. 
\end{itemize}
Let $\Kdelposst$ and $\Kdelposwk$ signify $\Kdelpos$ endowed with the strong or weak topology, respectively, and similarly for $\Kdelnn$.\end{defn}

\subsection{Two tropical versions of the Edrei Theorem} In order to tropicalise the totally positive Toeplitz matrices we need only to construct $\RR_{>0}$-valued points in a big enough open subset, and we make the ansatz $\gamma=0$. Let us suppose $u(\mathbf c)\in \ToeplitzU_{\infty}(\RR_{>0})$ has a generating function of the form
\begin{equation}\label{e:SchoenbergFormulaNoGammaFormal}
1+c_1x+c_2x^2+c_3x^3+\dotsc =\prod_{i=1}^\infty\frac{1+\boldbeta_ix}{1-\boldalpha_ix}.
\end{equation}
Then by a (supersymmetric) Jacobi-Trudi formula  Toeplitz minors of $u(\mathbf c)$ have an expression as  supersymmetric Schur functions $S_\lambda(\boldalpha||\boldbeta)$. See \cite{BalantekinBars,DondiJarvis,BereleRegev} as well as \cite{Macdonald:Book1} for more general background. For example, for 
the $k$-th coefficient
\[
c_k=S_{(k)}(\boldalpha||\boldbeta)=\sum_{i+j=k} h_i(\boldalpha)e_j(\boldbeta),\] 
where $h_i$ and $e_j$ are the complete and elementary symmetric functions, and 
%Furthermore, the minors appearing in the formula \eqref{e:mviaminors} for the standard coordinates $m_{ij}$ of $u(\mathbf c)$ are then given by 
\[
\Minor^{[i]}_{[i]+j}(u(\mathbf c)) = S_{i\times j}(\bold a||\bold b),
\]
where $i\times j$ denote the partition $j^i$, whose Young diagram is an $i\times j$ rectangle.

\begin{defn}\label{d:Omegas}
Write $\boldalpha=(\boldalpha_1,\boldalpha_2,\dotsc)$ and $\boldbeta=(\boldbeta_1,\boldbeta_2,\dotsc)$ for two infinite sequences in $\RR_{\ge 0}$ and set 
\begin{equation}\label{e:mij(a,b)}
\bold m_{ij}(\boldalpha,\boldbeta):=\frac{S_{i\times j}(\boldalpha||\boldbeta)S_{{(i-1)}\times{(j-1)}}(\boldalpha||\boldbeta)}{S_{i\times{(j-1)}}(\boldalpha||\boldbeta)S_{{(i-1)}\times j}(\boldalpha||\boldbeta)}.
\end{equation}
We define the (restricted) generalised Schoenberg parameter space $\Omega^{res}(\RR_{>0})$ as follows,
\begin{equation*}
\Omega^{res}(\RR_{>0}):=\left\{(\boldalpha,\boldbeta)\in\RR_{\ge 0}^\N\times \RR_{\ge 0}^\N\left |\begin{array}{l} \boldalpha_1\ge\boldalpha_2\ge\boldalpha_3\ge\dotsc, \\
\boldbeta_1\ge\boldbeta_2\ge\boldbeta_3\ge \dotsc, \\
\boldbeta_i+\boldalpha_i\ne 0 \text{ for all $i\in\N,$}\\
  \text{$S_{i\times j}(\boldalpha||\boldbeta)$ is well-defined (converges) in $\RR_{>0}$ for all $i,j\in\N$,}\\
  \exists
   \text{ $\lim_{k\to\infty}\mathbf m_{ik}(\boldalpha,\boldbeta)$ and $\lim_{k\to\infty}\mathbf m_{kj}(\boldalpha,\boldbeta)$  in $\RR_{\ge 0}$,  for all $k\in\N$}
\end{array}
\right.\right\}.
\end{equation*}
Then we have the following well-defined map 
\begin{eqnarray}\label{e:Tres}
\mathcal T^{res}:\Omega^{res}(\RR_{>0}) &\longrightarrow & \Toeplitz^{res}_\infty(\RR_{>0})\\
(\boldalpha,\boldbeta)&\mapsto & u(\bold c),\notag
\end{eqnarray}
where $u(\bold c)$ is as in \eqref{e:u}, and $\bold c=(c_i)_{i\in \N}$ is determined by 
\begin{equation}\label{e:SchoenbergFormulaNoGamma}
1+c_1x+c_2x^2+c_3x^3+\dotsc =\prod_{i=1}^\infty\frac{1+\boldbeta_jx}{1-\boldalpha_ix}.
\end{equation}
 \end{defn}
We will now formulate two infinite analogues of the tropical parametrisation theorem, Theorem~\ref{t:IntroTropFinite}. Both are modelled on the Edrei theorem, but they involve different choices for $\RR_{>0}$, namely $\Kdelposst$ and $\Kdelposwk$.
%, for the tropicalisation. 
\begin{defn}\label{d:interlacing}
Let $\bigA=(A_1,A_2,\dotsc)$ and $\bigB=(B_1,B_2,\dotsc)$ be two  infinite sequences in $\R\cup\{\infty\}$. 
\begin{itemize}
\item We say that $\bigA$ and $\bigB$ are \textit{weakly interlacing} if $\bigA$ and $\bigB$ are weakly increasing with the same supremum in $\R\cup\{\infty\}$.
\item
If $\bigA$ and $\bigB$ are weakly increasing sequences such that for every $A_i$ there exists a $B_j$ with $B_j\ge A_i$ and for every $B_j$ there exists an $A_i$ with $A_i\ge B_j$, then we call $\bigA$ and $\bigB$ \textit{interlacing}.   
\end{itemize}
We define the following tropical parameter spaces
\begin{eqnarray*}
\Omega_\star(\Rmin)&:=&\{(\bigA,\bigB)\mid \text{$\bigA$ and $\bigB$ are weakly interlacing, $\min(A_k,B_k)\in\R$}\},
\\	  
\Omega^{il}_\star(\Rmin)&:=&\{(\bigA,\bigB)\mid \text{$\bigA$ and $\bigB$ are interlacing, $\min(A_k,B_k)\in\R$}\}.
\end{eqnarray*}
\end{defn}
\begin{remark}
Note that interlacing implies weakly interlacing, but not vice versa. For example, if $A_i=\frac{i-1}i$ and all $B_j=1$ then $\bigA$ and $\bigB$ are weakly interlacing but not interlacing.
\end{remark}
The following theorem, which is the first half of \cite[Theorem 10.6]{rietschToeplitz}, says that the Schoenberg type map $\mathcal T^{res}:\Omega^{res}(\Kdelposwk)\to\ToeplitzU^{res}_\infty(\Kdelposwk)$ tropicalises to give an explicit parametrisation of the full  set $\Imin_{\infty}(\Rmin)$ of infinite min-ideal fillings by $\Omega_\star(\Rmin)$. 

\begin{theorem}[$\Kdelposwk$-tropicalised Edrei Theorem]\label{t:restrtropparamWk}
For the $\Kdelposwk$-tropicalised Toeplitz matrices we have that
\[
\ToeplitzU_{\infty,\Kdelposwk}^{res}(\Rmin)=\Imin_{\infty}(\Rmin),
\]
and the following diagram commutes
\begin{equation}\label{e:TropEdreiWk}
\begin{array}{c}
% https://q.uiver.app/#q=WzAsMTAsWzEsMCwiXFxPbWVnYV9cXHN0YXIoXFxLZGVscG9zd2spIl0sWzMsMCwiXFxUb2VwbGl0elVee3Jlc31fXFxpbmZ0eShcXEtkZWxwb3N3aykiXSxbMSwxLCJcXE9tZWdhX1xcc3RhcihcXFJtaW4pIl0sWzMsMSwiXFxJbWluX1xcaW5mdHkoXFxSbWluKSJdLFs0LDAsIihtX3tpan0pX3tpLGp9Il0sWzQsMSwiKE1fe2lqfSlfe2ksan0iXSxbMCwwLCIoXFxib2xkYWxwaGEsXFxib2xkYmV0YSkiXSxbMCwxLCIoXFxiaWdBLFxcYmlnQikiXSxbMSwyLCJcXGJlZ2lue2FycmF5fXtjfXsoXFxiaWdBLFxcYmlnQil9XFxcXHtcXCBcXCB9XFxlbmR7YXJyYXl9Il0sWzMsMiwiXFxiZWdpbnthcnJheX17Y317KFxcbWluKEFfaSxCX2opKV97aSxqXFxpblxcTn19XFxcXHtcXCBcXCB9XFxlbmR7YXJyYXl9Il0sWzAsMV0sWzIsMywiXFxtYXRoYmIgRV9cXHN0YXIiXSxbMCwyLCJcXFZhbCIsMl0sWzEsM10sWzQsNSwiIiwxLHsic3R5bGUiOnsidGFpbCI6eyJuYW1lIjoibWFwcyB0byJ9fX1dLFs2LDcsIiIsMCx7InN0eWxlIjp7InRhaWwiOnsibmFtZSI6Im1hcHMgdG8ifX19XSxbOCw5LCIiLDAseyJvZmZzZXQiOi0yLCJzdHlsZSI6eyJ0YWlsIjp7Im5hbWUiOiJtYXBzIHRvIn19fV1d
\begin{tikzcd}
	{(\boldalpha,\boldbeta)} & {\Omega^{res}(\Kdelposwk)} && {\ToeplitzU^{res}_\infty(\Kdelposwk)} & {(m_{ij})_{i,j}} \\
	{(\bigA,\bigB)} & {\Omega_\star(\Rmin)} && {\Imin_\infty(\Rmin)} & {(M_{ij})_{i,j}} 
	\arrow[maps to, from=1-1, to=2-1]
	\arrow["\mathcal T^{res}", from=1-2, to=1-4]
	\arrow["\Val",two heads, from=1-2, to=2-2]
	\arrow["\Val", two heads, from=1-4, to=2-4]
	\arrow[maps to, from=1-5, to=2-5]
	\arrow["{\mathbb E}", from=2-2, to=2-4]
\end{tikzcd}\\
% https://q.uiver.app/#q=WzAsMixbMCwwLCIoXFxiaWdBLFxcYmlnQikiXSxbMiwwLCIoXFxtaW4oQV9pLEJfaikpX3tpLGpcXGluXFxOfSJdLFswLDEsIiIsMCx7InN0eWxlIjp7InRhaWwiOnsibmFtZSI6Im1hcHMgdG8ifX19XV0=
\begin{tikzcd}
	{(\bigA,\bigB)} && {(\min(A_i,B_j))_{i,j\in\N}.}
	\arrow[maps to, from=1-1, to=1-3]
\end{tikzcd}
\end{array}
\end{equation}
Moreover the tropicalisation $\mathbb E$ of $\mathcal T^{res}$ is a bijection.
\end{theorem}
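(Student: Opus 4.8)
The plan is to pin down the tropicalised map $\mathbb E$ by an explicit formula, which at the same time yields the commutativity of \eqref{e:TropEdreiWk} together with its lower triangle, and then to deduce the bijectivity of $\mathbb E$ and the equality $\ToeplitzU^{res}_{\infty,\Kdelposwk}(\Rmin)=\Imin_\infty(\Rmin)$ from an elementary description of min-ideal fillings combined with an explicit lifting of tropical points from $\Rmin$ to $\Kdelposwk$.

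\emph{Identifying $\mathbb E$.} Given $(\boldalpha,\boldbeta)\in\Omega^{res}(\Kdelposwk)$ with valuations $A_i=\Val(\boldalpha_i)$ and $B_j=\Val(\boldbeta_j)$, I would put $u(\mathbf c)=\mathcal T^{res}(\boldalpha,\boldbeta)$ and use \eqref{e:mviaminors} together with $\Minor^{[i]}_{[i]+j}(u(\mathbf c))=S_{i\times j}(\boldalpha||\boldbeta)$ to see that its standard coordinates are exactly $m_{ij}=\mathbf m_{ij}(\boldalpha,\boldbeta)$; set $M_{ij}=\Val(m_{ij})$. Since each $S_{i\times j}(\boldalpha||\boldbeta)\in\Kdelpos$ and the row/column limits $\lim_k m_{ik}$, $\lim_k m_{kj}$ exist in $\Kdelnn$ by the definition of $\Omega^{res}$, we have $u(\mathbf c)\in\ToeplitzU^{res}_\infty(\Kdelpos)$, so by the inclusion recorded after \cref{d:Toeprest} (coming from \eqref{e:TropInIdealFillingsR}) the array $(M_{ij})$ is a min-ideal filling and in particular weakly increasing in each index. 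Evaluating at a fixed $t\in(0,\delta]$ turns $u(\mathbf c)$ into a genuine real totally positive infinite Toeplitz matrix with classical Schoenberg parameters $\boldalpha_i(t),\boldbeta_j(t)$, so \cref{p:Edreilimits} identifies the limits above as $\lim_k m_{ik}=\boldalpha_i$ and $\lim_k m_{kj}=\boldbeta_j$ (pointwise convergence being part of the weak topology); as $\Val$ is continuous for the weak topology this gives $\sup_k M_{ik}=\lim_k M_{ik}=A_i$ and $\sup_k M_{kj}=B_j$. Finally, a short chase of the relations $\min(M_{i+1,j},M_{i,j+1})=M_{ij}$ shows that any min-ideal filling satisfies $M_{ij}=\min(\sup_k M_{ik},\sup_k M_{kj})$: if the left-hand side were strictly smaller, following the $i$-th row to the first column index $j'$ at which the value strictly exceeds $M_{ij}$ and propagating downward forces $M_{i+r,j'}=M_{ij}$ for all $r\ge 0$, contradicting $\sup_k M_{kj'}\ge\sup_k M_{kj}>M_{ij}$. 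Hence $M_{ij}=\min(A_i,B_j)$, which is the content of the lower triangle and shows $\mathbb E(\bigA,\bigB):=\Val(\mathcal T^{res}(\boldalpha,\boldbeta))$ is well defined and equals $(\min(A_i,B_j))_{i,j}$.

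\emph{Bijectivity and surjectivity of $\Val$.} The chase above applies to any min-ideal filling $(M_{ij})\in\Imin_\infty(\Rmin)$, so setting $A_i=\sup_k M_{ik}$ and $B_j=\sup_k M_{kj}$ we get $M_{ij}=\min(A_i,B_j)$ with $\bigA,\bigB$ weakly increasing, of common supremum $\sup_{i,j}M_{ij}$, and with $\min(A_k,B_k)=M_{kk}\in\R$; thus $(\bigA,\bigB)\in\Omega_\star(\Rmin)$ and $\mathbb E(\bigA,\bigB)=(M_{ij})$. Conversely $(\min(A_i,B_j))_{i,j}$ plainly satisfies the min-ideal relations and, since membership in $\Omega_\star(\Rmin)$ forces $\min(A_i,B_j)\in\R$ for all $i,j$, lies in $\Imin_\infty(\Rmin)$; and $\mathbb E$ is injective because $\bigA,\bigB$ are recovered from $(M_{ij})$ by taking suprema of rows and columns. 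It remains to see that the vertical $\Val$-maps in \eqref{e:TropEdreiWk} are onto and that $\ToeplitzU^{res}_{\infty,\Kdelposwk}(\Rmin)\supseteq\Imin_\infty(\Rmin)$, the reverse inclusion being the one after \cref{d:Toeprest}; both follow once every $(\bigA,\bigB)\in\Omega_\star(\Rmin)$ is lifted, and I would do this by setting $\boldalpha_i(t)=2^{-i}t^{A_i}$ and $\boldbeta_j(t)=2^{-j}t^{B_j}$ on $(0,\delta]$ (with $t^{\infty}=0$, admissible since then the partner coordinate is finite). These are positive, weakly decreasing (as $\delta<1$ and $\bigA,\bigB$ are weakly increasing), have valuations $A_i,B_j$, and make $\prod_i(1+\boldbeta_ix)/(1-\boldalpha_ix)$ converge; granting that each $S_{i\times j}(\boldalpha||\boldbeta)$ then converges in $\Kdelpos$ with valuation equal to the minimum over the (monomial-positive, hence non-cancelling) terms, and that $\lim_k\mathbf m_{ik}$ exists in $\Kdelnn$, we get $(\boldalpha,\boldbeta)\in\Omega^{res}(\Kdelposwk)$ with $\Val(\boldalpha,\boldbeta)=(\bigA,\bigB)$, and by the previous paragraph $\mathcal T^{res}(\boldalpha,\boldbeta)$ tropicalises to $\mathbb E(\bigA,\bigB)$. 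This finishes the commutativity of the diagram, the bijectivity of $\mathbb E$, and the equality $\ToeplitzU^{res}_{\infty,\Kdelposwk}(\Rmin)=\Imin_\infty(\Rmin)$.

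The step I expect to be the real obstacle is this last verification: showing that for these (or any admissible) lifts the infinite supersymmetric Schur series $S_{i\times j}(\boldalpha||\boldbeta)$ genuinely converge inside $\Kdelpos$ and carry exactly the predicted valuations --- equivalently, that the weak topology of \cref{ex:continuous} is flexible enough for $\mathcal T^{res}$ to be defined on all of $\Omega_\star$, whereas the more rigid Puiseux semifield used in \cref{t:IntroTropFinite} would fail here. This is precisely the point at which the detailed estimates of \cite{rietschToeplitz}, resting on \cite{Judd:Flag,Ludenbach}, are needed.
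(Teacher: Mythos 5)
Your proposal matches the architecture of the paper's own argument (which the paper defers to the first half of \cite[Theorem~10.6]{rietschToeplitz} but sketches in its background results): the explicit lift $\boldalpha_i=2^{-i}t^{A_i}$, $\boldbeta_j=2^{-j}t^{B_j}$, the use of pointwise convergence plus convergence of valuations (the definition of the weak topology) to pass from Edrei's \cref{p:Edreilimits} to the tropical identities, and the formula $\mathbb E(\bigA,\bigB)=(\min(A_i,B_j))_{i,j}$ with inverse obtained from row and column suprema. The one point where you take a genuinely different detour is how you arrive at $M_{ij}=\min(A_i,B_j)$: the paper's reference computes $\Val\bigl(S_{i\times j}(\boldalpha||\boldbeta)\bigr)=\sum_{(r,s)\le(i,j)}\min(A_r,B_s)$ directly from the Goulden--Greene--Macdonald tableau formula (whose monomial positivity eliminates cancellation, so the valuation is the minimum over tableaux) and then substitutes into \eqref{e:mij(a,b)}; you instead first assert that $(M_{ij})$ is a min-ideal filling, prove an infinite-min-ideal identity $M_{ij}=\min(\sup_k M_{ik},\sup_k M_{kj})$, and identify those suprema via Edrei's proposition. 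Both routes rely on the same analytic input, and you correctly flag it.

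Two caveats. First, the ``short chase'' you give for $M_{ij}=\min(\sup_k M_{ik},\sup_k M_{kj})$ has a misprint and is a bit underdone: with $j'$ the first column index past $j$ where $M_{ij'}>M_{ij}$, what propagates downward with value $M_{ij}$ is column $j'-1$, not $j'$, and the contradiction is against $\sup_k M_{k,j'-1}\ge\sup_k M_{kj}>M_{ij}$. Moreover, the propagation step (that the min-ideal relation at a single local triangle forces $M_{i+1,j'-1}=M_{ij}$, and recursively $M_{i+r,j'-1}=M_{ij}$) uses that $M_{i+r,j'}\ge M_{ij'}>M_{ij}$ for all $r$, which is correct but needs to be said; this is precisely the content of the infinite-case lemma the reference proves carefully (a finite min-ideal filling need not satisfy this, as \cref{r:NonSurj}-type examples show). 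Second, and more seriously, the ``granting that each $S_{i\times j}(\boldalpha||\boldbeta)$ converges in $\Kdelpos$ with the predicted valuation'' step is not a side condition but the actual crux: without it you neither know that $\mathcal T^{res}(\boldalpha,\boldbeta)$ lands in $\ToeplitzU^{res}_\infty(\Kdelposwk)$ (so the min-ideal-filling fact you invoke at the outset is not yet available), nor that the row/column limits exist. So the logical order in your paragraph one is slightly circular unless you first establish the supersymmetric Schur convergence and valuation formula, and that is exactly where the GGM monomial positivity together with the comparison estimates against $\bigl(\sum_{\ell\ge k}\boldalpha_\ell+\boldbeta_\ell\bigr)^{\lambda_k}$ are needed. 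You have located the obstacle correctly; the gap is that the proof cannot be carried out in the stated order until it is filled.
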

We remark that a key ingredient for the commutativity of this diagram is a tableau formula for  supersymmetric Schur functions due to Goulden-Greene and Macdonald  \cite{GouldenGreene94,Macdonald:variations}. We now  state the second half of \cite[Theorem~10.6]{rietschToeplitz}.  Recall the set of stable min-ideal fillings $\Imin_{\infty}^s(\Rmin)$ from Definition~\ref{d:minIdeal}.

\begin{theorem}[$\Kdelposst$-tropicalised Edrei Theorem]\label{t:restropparamSt}
For the $\Kdelposst$-tropicalised Toeplitz matrices we have that
\[
\ToeplitzU_{\infty,\Kdelposst}^{res}(\Rmin)=\Imin^s_{\infty}(\Rmin),
\]
and the commutative diagram~\eqref{e:TropEdreiWk} restricts to 
\begin{equation}\label{e:restrOmVal}
% https://q.uiver.app/#q=WzAsNCxbMCwwLCJcXE9tZWdhXntyZXN9KFxcS2RlbHBvc3drKSJdLFsyLDAsIlxcVG9lcGxpdHpVXntyZXN9X1xcaW5mdHkoXFxLZGVscG9zd2spIl0sWzAsMSwiXFxPbWVnYV9cXHN0YXIoXFxSbWluKSJdLFsyLDEsIlxcSW1pbl9cXGluZnR5Il0sWzAsMV0sWzIsMywiXFxtYXRoYmIgRV9cXHN0YXIiXSxbMCwyLCJcXFZhbCIsMl0sWzEsM11d
\begin{tikzcd}
	{\Omega^{res}(\Kdelposst)} && {\ToeplitzU^{res}_\infty(\Kdelposst)} \\
	{\Omega_\star^{il}(\Rmin)} && {\Imin^{s}_\infty}(\Rmin).
	\arrow["{\mathcal T^{res}}", from=1-1, to=1-3]
	\arrow["\Val",two heads, from=1-1, to=2-1]
	\arrow["\Val",two heads, from=1-3, to=2-3]
	\arrow["{\mathbb E^s}", from=2-1, to=2-3]
\end{tikzcd}
\end{equation}
Moreover, the map $\mathbb E^s$ is a bijection. 
\end{theorem}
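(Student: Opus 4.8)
The plan is to deduce the theorem from the weak version \cref{t:restrtropparamWk} by controlling the difference between the weak topology and the finer strong topology on $\Kdelpos$. Since being a totally positive Toeplitz matrix, the relation $\sim$, and the generalised Schoenberg parameter space are all insensitive to the topology, one has $\ToeplitzU^{res}_\infty(\Kdelposst)\subseteq\ToeplitzU^{res}_\infty(\Kdelposwk)$ and $\Omega^{res}(\Kdelposst)\subseteq\Omega^{res}(\Kdelposwk)$, with $\mathcal{T}^{res}$ on the strong side the restriction of the weak one; so the diagram \eqref{e:restrOmVal} and bijectivity of $\mathbb E^s$ will be inherited from \eqref{e:TropEdreiWk} once two things are understood: which elements of $\ToeplitzU^{res}_\infty(\Kdelposwk)$ survive into $\ToeplitzU^{res}_\infty(\Kdelposst)$, and surjectivity of the left vertical onto $\Omega_\star^{il}(\Rmin)$. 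The first rests on a local analysis of strong convergence in $\Kdelnnst$: as $\{0\}$ is strongly open, a sequence of nonzero elements can strongly converge only to a nonzero $f\in\Kdelposst$, and then, testing against the neighbourhoods $t^{\Val(f)}B_\ep(f)$ with $\ep$ below the leading coefficient of $f$, the valuations of the terms are eventually constant $=\Val(f)$. Applied to the rows $(m_{ik})_k$ and columns $(m_{kj})_k$ of the standard coordinates of $u\in\ToeplitzU^{res}_\infty(\Kdelposst)$, this shows the tropical coordinates $M_{ij}=\Val(m_{ij})$ are eventually constant along rows and columns; as these are weakly increasing (it is a min-ideal filling, by the infinite analogue of \eqref{e:TropInIdealFillingsR}), eventual constancy is the same as attaining the maximum, so the tropical point is stable. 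This gives $\ToeplitzU^{res}_{\infty,\Kdelposst}(\Rmin)\subseteq\Imin^s_\infty(\Rmin)$.

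Next one checks which points of the weak model give stable fillings, which is a purely order-theoretic computation because, by \cref{t:restrtropparamWk}, $\mathbb E$ is $(\bigA,\bigB)\mapsto(\min(A_i,B_j))_{i,j}$. For weakly interlacing $(\bigA,\bigB)$ with common supremum $S$, the sequence $\min(A_i,B_k)$ is weakly increasing in $k$ with limit $\min(A_i,S)$, and this limit is attained exactly when some $B_k\ge A_i$; symmetrically for columns. Hence $\mathbb E(\bigA,\bigB)\in\Imin^s_\infty(\Rmin)$ if and only if $(\bigA,\bigB)$ is interlacing in the sense of \cref{d:interlacing}, and conversely for $(\bigA,\bigB)\in\Omega_\star^{il}$ every row and column of $(\min(A_i,B_j))$ is eventually constant, so the filling is stable. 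Thus $\mathbb E$ restricts to a bijection $\Omega_\star^{il}(\Rmin)\overset{\sim}{\longrightarrow}\Imin^s_\infty(\Rmin)$.

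What then remains is the lift: surjectivity of $\Val\colon\Omega^{res}(\Kdelposst)\to\Omega_\star^{il}(\Rmin)$. Combined with the preceding two paragraphs and commutativity of \eqref{e:TropEdreiWk}, this identifies $\mathbb E^s$ with $\mathbb E|_{\Omega_\star^{il}}$, gives the reverse inclusion $\Imin^s_\infty(\Rmin)\subseteq\ToeplitzU^{res}_{\infty,\Kdelposst}(\Rmin)$, and shows $\Val(\Omega^{res}(\Kdelposst))\subseteq\Omega_\star^{il}$, completing the theorem. For the lift, given interlacing $(\bigA,\bigB)$ I would take representatives $\boldalpha_i=a_i\,t^{A_i}$ and $\boldbeta_j=b_j\,t^{B_j}$ with weakly decreasing summable positive coefficients $a_i,b_j$ — decreasing so that $\boldalpha,\boldbeta$ are weakly decreasing in $\Kdelposst$, summable so that the product in \eqref{e:SchoenbergFormulaNoGamma} converges coefficientwise — and then verify every clause of the definition of $\Omega^{res}(\Kdelposst)$: convergence of the supersymmetric Schur functions $S_{i\times j}(\boldalpha\|\boldbeta)$ in $\Kdelposst$, and strong convergence of the standard coordinates $\mathbf{m}_{ik}(\boldalpha,\boldbeta)$ of \cref{d:Omegas} as $k\to\infty$ (and of $\mathbf{m}_{kj}$ as $k\to\infty$). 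This last verification is the genuine obstacle, and it is where the Goulden--Greene--Macdonald tableaux formula for supersymmetric Schur functions is essential: it writes $S_{i\times j}(\boldalpha\|\boldbeta)$ as an explicit positive monomial sum whose lowest-valuation term is governed by $\min(A_i,B_j)$, so that under interlacing the minimising tableau, and hence the leading coefficient of $\mathbf{m}_{ik}$, stabilises for large $k$; summability of the $a_i,b_j$ then must be used to control the remainder uniformly on $(0,\delta]$. This uniform control is exactly what the weak topology does not demand, and it is the reason the construction is carried out in the continuous-function semifield $\Kdelpos$ rather than in ordinary Puiseux series.
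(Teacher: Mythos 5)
Your overall structure matches the paper's proof (which is the second half of \cite[Theorem~10.6]{rietschToeplitz}, reproduced in an appendix here): deduce from the weak case \cref{t:restrtropparamWk}, observe that in $\Kdelnnst$ a convergent sequence of nonzero elements has eventually constant valuations (the paper's analogue is the lemma stating $\Val$ is continuous for the discrete topology on $\Rmin$), so the restricted condition forces the tropical coordinates $(M_{ij})$ to stabilise along rows and columns, hence lie in $\Imin^s_\infty(\Rmin)$; identify stable min-ideal fillings with interlacing tropical parameters via the explicit formula $M_{ij}=\min(A_i,B_j)$; and finally construct an explicit lift over $\Kdelposst$ to show surjectivity of $\Val$ on the left. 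Your first two paragraphs essentially reproduce this.

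However, for the crux of the argument — surjectivity of $\Val\colon\Omega^{res}(\Kdelposst)\to\Omega_\star^{il}(\Rmin)$, specifically showing that the sequences $(\mathbf m_{ik}(\boldalpha,\boldbeta))_k$ converge in the \emph{strong} topology — your proposal diverges from the paper's route and leaves a genuine gap. You suggest obtaining uniform convergence on $(0,\delta]$ from the Goulden--Greene--Macdonald tableaux formula for $S_{i\times j}(\boldalpha\|\boldbeta)$ combined with summability of the coefficients $a_i,b_j$. As written this is an intention rather than an argument: you would need an explicit estimate on the tail of the tableaux sum after dividing by $t^{M_{ik}}$, and it is not clear summability alone does the job. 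The paper sidesteps this. It passes to the reciprocal vertex labels $v_{ik}\inv$ of \cref{d:vij}: by \cref{p:ToeplitzEquiv} the quiver arrow labels are positive, so the $v_{ik}$ are monotone increasing in $k$; once the valuations have stabilised (at $A_i$), the normalised sequence $(t^{-A_i}v_{ik}\inv)_k$ is a monotonely decreasing sequence of continuous nonnegative functions on $[0,\delta]$ converging pointwise to a continuous limit (pointwise convergence comes from the classical Edrei result \cref{p:Edreilimits}), and Dini's theorem gives uniform convergence directly. One then recovers strong convergence of $m_{ik}=(v_{ik}-v_{i-1,k-1})\inv$. The monotonicity-plus-Dini step is the essential idea your sketch is missing, and it replaces any delicate tableaux estimate.

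A smaller imprecision: you assert the leading coefficient of $\mathbf m_{ik}$ \emph{stabilises} for large $k$; in general it only converges (continuity of $\lc$ in the strong topology). Stabilisation is a property of the valuations, not the leading coefficients, and is exactly what the strong topology gives you via the discreteness of $\Val$.
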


The `restricted' condition from Definition~\ref{d:Toeprest} ensures that the tropicalisation of the map $\mathcal T^{res}$ becomes a bijection in the above two theorems. Namely, this condition leads directly to the (weakly) interlacing condition in the parameter spaces in Definition~\ref{d:interlacing}. Without these conditions the map $\mathbb E$ still makes sense, but it would no longer be injective, compare~\cite[Lemma 5.16 and Corollary 9.12]{rietschToeplitz}.  We call the parameters $A_i, B_j$ appearing in Theorem~\ref{t:restrtropparamWk} and Theorem~\ref{t:restropparamSt} the \textit{tropical Schoenberg parameters}.
 
\subsection{Asymptotics of Lusztig weights} 
To state our final asymptotic result 
%which is a version of \cite[Theorem 6.1]{rietschToeplitz} 
we need one more type of min-ideal filling, which is the tropical analogue of $\ToeplitzU_\infty^{\circ}$. Namely,
\[
\Imin^{\mathbb R}_\infty(\Rmin)=\{(M_{ij})_{i,j}\in\Imin_{\infty}(\Rmin)\mid \lim_{k\to\infty}M_{ik},\ \lim_{k\to\infty}M_{kj}\in \R\}.
\]
We call min-ideal fillings in $\Imin^{\mathbb R}_\infty(\Rmin)$ asymptotically real. 
Theorem~\ref{t:VKtypethmtrop} below, which is a version of  \cite[Theorem 6.1]{rietschToeplitz}, has a proof that is  analogous to the one of Theorem~\ref{t:IntroVKtyp}, but with Proposition~\ref{p:Edreilimits} replaced by an argument from \cite[Theorem~5.8]{rietschToeplitz}.  However, Theorem~\ref{t:VKtypethmtrop} preceded and motivated Theorem~\ref{t:IntroVKtyp}.  

\begin{theorem}[{}]\label{t:VKtypethmtrop}
Suppose $(M^{(n+1)})_n$ is a sequence of min-ideal fillings with $M^{(n+1)}\in\Imin_{n+1}$, such that $(M^{(n+1)})_n$ converges uniformly to %an asymptotically real min-ideal filling 
$M^{(\infty)}=(M_{ij})_{i,j\in\N}\in\Imin^\R_\infty(\Rmin)$. Let $(\bigA,\bigB)$ be the tropical Schoenberg parameters for $M^{(\infty)}$ from Theorem~\ref{t:restrtropparamWk}, and 
% We have $M^{(\infty)}=\mathbb E(\bigA,\bigB)$ for real parameter sequences $\bigA=(A_i)_i$ and $\bigB=(B_j)_j$ with $\mathbb E$ as in \eqref{e:TropEdreiWk}. 
let the Lusztig weight of $M^{(n+1)}$ be given by
\begin{equation}\label{e:Ln+1}
\mathcal L^{(n+1)}(M^{(n+1)})=\begin{pmatrix}
\lambda^{(n+1)}_1 & &&&\\
&\lambda^{(n+1)}_2 & &&\\
&&\ddots &&\\
&&&\lambda^{(n+1)}_n & \\
&&&&\lambda^{(n+1)}_{n+1}
\end{pmatrix}.
\end{equation}
Then the normalised limits of the finite parameters are real and recover the tropical Schoenberg parameters,
\begin{equation}\label{e:limits}
\lim_{n\to\infty}\frac{\lambda^{(n+1)}_i}n=A_i\qquad \text{and}\qquad \lim_{n\to\infty}\frac{-\lambda^{(n+1)}_{n+2-j}}{n}=B_j.
\end{equation}
\end{theorem}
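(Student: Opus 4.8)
The plan is to mimic the proof of \cref{t:IntroVKtyp} almost verbatim, with the logarithmic standard coordinates $\mu_{ij}$ replaced by the tropical standard coordinates $M_{ij}$, products replaced by sums, and $\ln\alpha_i$ replaced by $A_i$. First I would record the shape of the Lusztig weight in standard coordinates. By \cref{l:dunilemma} we have $\mathcal L^{(n+1)}=\Trop(\mathbf d)$, so that $\lambda_i^{(n+1)}=\Trop(d_i^{(n+1)})$; tropicalising the Laurent-monomial identity \eqref{e:dn+1} exactly as it becomes \eqref{e:deltan+1} gives, for the rank-$n$ truncation $M^{[n+1]}$ of $M^{(\infty)}$,
\[
\lambda_i^{[n+1]}=\bigl(M_{i1}+\dots+M_{i,n-i+1}\bigr)-\bigl(M_{i-1,n-i+2}+\dots+M_{1,n-i+2}\bigr),
\]
with $n-i+1$ summands along row $i$ and $i-1$ summands along column $n-i+2$, and for a general tropical point $M^{(n+1)}\in\Imin_{n+1}$ the same formula in the coordinates $M_{ij}^{(n+1)}$.

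The tropical substitute for \cref{p:Edreilimits} is the assertion that, for $M^{(\infty)}$ as in the theorem, $\lim_{k\to\infty}M_{ik}=A_i$ and $\lim_{k\to\infty}M_{ki}=B_i$, both limits lying in $\R$. This is the content of \cite[Theorem~5.8]{rietschToeplitz}, but with \cref{t:restrtropparamWk} in hand it is immediate: the bottom map in \eqref{e:TropEdreiWk} identifies $M^{(\infty)}$ with $(\min(A_i,B_j))_{i,j}$, and since $\bigA,\bigB$ are weakly interlacing -- weakly increasing with a common supremum $S\in\R\cup\{\infty\}$ -- we get $\lim_k M_{ik}=\min(A_i,S)=A_i$, finite because $M^{(\infty)}\in\Imin^{\mathbb R}_\infty$; the statement for columns is symmetric. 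With this input, fix $i$ and $\ep>0$ and choose $N_\ep$ with $|M_{hk}-A_h|<\ep$ for all $h\le i$ and all $k\ge N_\ep$; then for $n$ large the row sum equals $(n-i+1-N_\ep)A_i$ up to an error of size $\ep(n-i+1-N_\ep)$ plus a constant independent of $n$, while the $i-1$ column terms $M_{\ell,n-i+2}$ ($\ell<i$) each lie within $\ep$ of $A_\ell\in\R$ and hence are bounded uniformly in $n$. Dividing by $n$, letting $n\to\infty$ and then $\ep\to 0$ gives $\lambda_i^{[n+1]}/n\to A_i$.

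Next I would pass from truncations to the given uniformly convergent sequence: uniform convergence yields $|M_{ij}^{(n+1)}-M_{ij}|<\ep$ for all $i+j\le n+1$ once $n$ is large, and since the expression for $\lambda_i^{(n+1)}$ involves at most $n$ summands we get $|\lambda_i^{(n+1)}-\lambda_i^{[n+1]}|<n\ep$, whence $\lambda_i^{(n+1)}/n\to A_i$ as well -- the first half of \eqref{e:limits}. The second half follows by applying the first to the transposed filling: the involution $(M_{ij})\mapsto(M_{ji})$ preserves $\Imin_{n+1}$ and $\Imin^{\mathbb R}_\infty$, interchanges rows with columns and hence the parameters $\bigA\leftrightarrow\bigB$, and by inspection of the displayed formula sends $\lambda_i^{(n+1)}$ to $-\lambda_{n+2-i}^{(n+1)}$ (the tropical shadow of the symmetry $d_i\mapsto 1/d_{n+2-i}$ used for the $\beta_j$-case in \cref{t:IntroVKtyp}); so $-\lambda_{n+2-j}^{(n+1)}/n\to B_j$. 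The only step demanding care is the bookkeeping here -- keeping straight which part of $\lambda_i^{(n+1)}$ contributes $\Theta(n)$ terms and which contributes $O(1)$ terms -- and it is precisely the asymptotically-real hypothesis on $M^{(\infty)}$ that forces those $O(1)$ column terms to stay bounded in $n$ (if a row or column limit were $+\infty$ the normalised limit could be spoiled). The genuine mathematical content, namely the convergence of the min-ideal coordinates along rows and columns to the tropical Schoenberg parameters, is supplied by \cref{t:restrtropparamWk} and is the exact tropical analogue of Edrei's \cref{p:Edreilimits}.
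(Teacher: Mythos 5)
Your proposal is correct and follows precisely the strategy the paper indicates (though the paper itself only sketches the proof, pointing to the analogy with \cref{t:IntroVKtyp} and citing the external reference for details): tropicalise the formula for $\mathbf d$, replace \cref{p:Edreilimits} by the row/column limits $\lim_k M_{ik}=A_i$ and $\lim_k M_{kj}=B_j$ extracted from $M_{ij}=\min(A_i,B_j)$ via \cref{t:restrtropparamWk}, run the same $\ep$-estimate dividing by $n$, and use the transpose symmetry for the $B_j$-limit. The bookkeeping you flag as the delicate point -- the $\Theta(n)$ row terms versus the $O(1)$ column terms, with the asymptotically-real hypothesis keeping the latter bounded -- is exactly the right thing to worry about and you handle it correctly.
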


\begin{remark}
The asymptotically real condition is necessary for the theorem to hold \cite[Remark 6.3]{rietschToeplitz}.
\end{remark}

In Theorem~\ref{t:VKtypethmtrop} the relation between the $A_i$ and the $B_i$ gains an interpretation as the limit of the natural involution $-w_0$ on $\mathfrak{h}_{PSL_{n+1}}^*$ which relates the weights of dual representations. 
Thus the Lusztig weight map $\mathcal L^{(n+1)}$ and its dual $\mathcal L^{(n+1),\vee}:=-w_0\circ \mathcal L^{(n+1)}$, after normalisation, restrict to give a map
\begin{equation}\label{e:FinEdreiInvAnalogue}
\frac 1n(\mathcal L^{(n+1)},\mathcal L^{(n+1),\vee}):\Imin_{n+1}\to \mathfrak h_{\R,PSL_{n+1}}^*\oplus \mathfrak h_{\R,PSL_{n+1}}^*,
\end{equation}
which can be considered a finite analogue of the restriction of $\mathbb E\inv$,
\begin{equation}\label{e:Edreiinv}
(\mathbb E^\R)\inv: \Imin^\R_{\infty}(\Rmin)\ \overset\sim\longrightarrow\ \Omega^{\R}_\star(\Rmin),
\end{equation}
compare   \cite[Theorem~5.8]{rietschToeplitz}. We observe that $\frac{1}n\mathcal L^{(n+1)}$ and 
$\frac{1}n\mathcal L^{(n+1)}$ individually have image all of $\mathfrak h_{\R,PSL_{n+1}}^*$, while the image of $(\mathbb E^\R)\inv$ is made up  of infinite  `antidominant' sequences (since the $A_i$ and $B_j$ are weakly increasing). Meanwhile the image of the combined map \eqref{e:FinEdreiInvAnalogue} lies in an `antidiagonal'  subspace $\{(h,- w_0\cdot h)\mid h\in\mathfrak h^*_{\R,PSL_{n+1}}\}$, while the only relationship between the components of $(\bigA,\bigB)$ in the image of $(\mathbb E^\R)\inv$ is that they have the same supremum.

\subsection*{Acknowledgements} It is a pleasure to thank George Lusztig, whose work on total positivity has been very inspiring to me, and  Dale Peterson, who sparked my interest in Toeplitz matrices and quantum cohomology. I am very grateful to my former students Jamie Judd and Teresa L\"udenbach for their beautiful work, and  to all of my collaborators, particularly Bethany Marsh, Lauren Williams, Clelia Pech,  Changzheng Li and Mingzhi Yang, for the multitude of flag variety-related projects and discussions. 
I also thank Alexei Borodin, Yang-Hui He, Alexander Pushnitski and Nicholas Shepherd-Barron for helpful conversations related to this work.

\bibliographystyle{plain}%
\bibliography{biblio}

\def\cprime{$'$}
\begin{thebibliography}{10}

\bibitem{ASW}
Michael Aissen, I.~J. Schoenberg, and A.~M. Whitney.
\newblock On the generating functions of totally positive sequences. {I}.
\newblock {\em J. Anal. Math.}, 2:93--103, 1952.

\bibitem{AF:book}
David Anderson and William Fulton.
\newblock {\em Equivariant cohomology in algebraic geometry}, volume 210 of
  {\em Cambridge Studies in Advanced Mathematics}.
\newblock Cambridge University Press, Cambridge, 2024.

\bibitem{BalantekinBars}
A.~B. Balantekin and I.~Bars.
\newblock Representations of supergroups.
\newblock {\em J. Math. Phys.}, 22(8):1810--1818, 1981.

\bibitem{BereleRegev}
A~Berele and A~Regev.
\newblock Hook {Y}oung diagrams with applications to combinatorics and to
  representations of {L}ie superalgebras.
\newblock {\em Advances in Mathematics}, 64(2):118--175, 1987.

\bibitem{BeKa}
Arkady Berenstein and David Kazhdan.
\newblock Lecture notes on geometric crystals and their combinatorial
  analogues.
\newblock In {\em Combinatorial aspect of integrable systems}, volume~17 of
  {\em MSJ Mem.}, pages 1--9. Math. Soc. Japan, Tokyo, 2007.

\bibitem{BorgerGrinberg}
James Borger and Darij Grinberg.
\newblock Boolean {W}itt vectors and an integral {E}drei-{T}homa theorem.
\newblock {\em Selecta Math. (N.S.)}, 22(2):595--629, 2016.

\bibitem{BorodinOlshanskiBook}
Alexei Borodin and Grigori Olshanski.
\newblock {\em Representations of the infinite symmetric group}, volume 160 of
  {\em Camb. Stud. Adv. Math.}
\newblock Cambridge: Cambridge University Press, 2016.

\bibitem{BossingerCMN24}
Lara Bossinger, Man-Wai Cheung, Timothy Magee, and Alfredo {Nájera Chávez}.
\newblock Newton–{O}kounkov bodies and minimal models for cluster varieties.
\newblock {\em Advances in Mathematics}, 447:109680, 2024.

\bibitem{Brion}
Michel Brion.
\newblock Lectures on the geometry of flag varieties.
\newblock In {\em Topics in cohomological studies of algebraic varieties},
  Trends Math., pages 33--85. Birkh\"auser, Basel, 2005.

\bibitem{BufetovGorinThoma}
Alexey Bufetov and Vadim Gorin.
\newblock Stochastic monotonicity in {Y}oung graph and {T}homa theorem.
\newblock {\em Int. Math. Res. Not. IMRN}, (23):12920--12940, 2015.

\bibitem{Chhaibi:Thesis}
Reda Chhaibi.
\newblock Littelmann path model for geometric crystals, {W}hittaker functions
  on {L}ie groups and {B}rownian motion.
\newblock PhD thesis, 2013.

\bibitem{Chow:Flag}
C.~H. Chow.
\newblock On {D}. {P}eterson's presentation of quantum cohomology of {$G/P$}.
\newblock arXiv:2210.17382, 2024.

\bibitem{Chow:FlagDmirror}
C.~H. Chow.
\newblock The {$D_{\hslash}$}-module mirror conjecture for flag varieties.
\newblock arXiv:2311.15523, 2025.

\bibitem{Chow:Gamma}
C.~H. Chow.
\newblock Gamma conjecture {I} for flag varieties.
\newblock arXiv:2501.13221, 2025.

\bibitem{Chow:PLSTheorem}
C.~H. Chow.
\newblock Peterson–{L}am–{S}himozono’s theorem is an affine analogue of
  the quantum {C}hevalley formula.
\newblock {\em Forum of Mathematics, Sigma}, 13:e62, 2025.

\bibitem{C-F:QCohFl}
Ionu{\c{t}} Ciocan-Fontanine.
\newblock Quantum cohomology of flag varieties.
\newblock {\em IMRN}, 1995(6):263--277, 1995.

\bibitem{Davydov}
A.~A. Davydov.
\newblock Totally positive sequences and {$R$}-matrix quadratic algebras.
\newblock volume 100, pages 1871--1876. 2000.
\newblock Algebra, 12.

\bibitem{DondiJarvis}
P.~H. Dondi and P.~D. Jarvis.
\newblock Diagram and superfield techniques in the classical superalgebras.
\newblock {\em J. Phys. A, Math. Gen.}, 14:547--563, 1981.

\bibitem{Edrei52}
A.~Edrei.
\newblock On the generating functions of totally positive sequences {II}.
\newblock {\em J. Analyse Math.}, 2:104--109, 1952.

\bibitem{EdreiDouble}
Albert Edrei.
\newblock On the generating function of a doubly infinite totally positive
  sequence.
\newblock {\em Transactions of the American Mathematical Society},
  74(3):367--383, 1953.

\bibitem{Edrei53}
Albert Edrei.
\newblock Proof of a conjecture of {S}choenberg on the generating function of a
  totally positive sequence.
\newblock {\em Canadian Journal of Mathematics}, 5:86–94, 1953.

\bibitem{FGP}
Sergey Fomin, Sergei Gelfand, and Alexander Postnikov.
\newblock Quantum {Schubert} polynomials.
\newblock {\em J. Am. Math. Soc.}, 10(3):565--596, 1997.

\bibitem{FZ:DoubleBruhat}
Sergey Fomin and Andrei Zelevinsky.
\newblock Double {B}ruhat cells and total positivity.
\newblock {\em J. Amer. Math. Soc.}, 12(2):335--380, 1999.

\bibitem{GK:oscillation}
F.~P. Gantmacher and M.~G. Krein.
\newblock {\em Oscillation matrices and kernels and small vibrations of
  mechanical systems}.
\newblock AMS Chelsea Publishing, Providence, RI, revised edition, 2002.
\newblock Translation based on the 1941 Russian original, Edited and with a
  preface by Alex Eremenko.

\bibitem{Gin:GS}
V.~Ginzburg.
\newblock {Perverse sheaves on a {L}oop group and {L}anglands' duality}.
\newblock arXiv:9511007, 1995.

\bibitem{GiventalKim:TodaFl}
Alexander Givental and Bumsig Kim.
\newblock Quantum cohomology of flag manifolds and {Toda} lattices.
\newblock {\em Commun. Math. Phys.}, 168(3):609--641, 1995.

\bibitem{Givental:QToda}
Alexander~B. Givental.
\newblock Stationary phase integrals, quantum {T}oda lattices, flag manifolds
  and the mirror conjecture.
\newblock In {\em Topics in singularity theory}, volume 180 of {\em Amer. Math.
  Soc. Transl. Ser. 2}, pages 103--115. Amer. Math. Soc., Providence, RI, 1997.

\bibitem{GohmKostler12}
Rolf Gohm and Claus K\"ostler.
\newblock Noncommutative independence in the infinite braid and symmetric
  group.
\newblock In {\em Noncommutative harmonic analysis with applications to
  probability {III}}, volume~96 of {\em Banach Center Publ.}, pages 193--206.
  Polish Acad. Sci. Inst. Math., Warsaw, 2012.

\bibitem{GouldenGreene94}
Ian Goulden and Curtis Greene.
\newblock A new tableau representation for supersymmetric {S}chur functions.
\newblock {\em J. Algebra}, 170(2):687--703, 1994.

\bibitem{Grochenig23}
Karlheinz Gr{\"o}chenig.
\newblock {\em Schoenberg's Theory of Totally Positive Functions and the
  Riemann Zeta Function}, pages 193--210.
\newblock Springer International Publishing, Cham, 2023.

\bibitem{GHKK}
Mark {Gross}, Paul {Hacking}, Sean {Keel}, and Maxim {Kontsevich}.
\newblock Canonical bases for cluster algebras.
\newblock {\em J. Amer. Math. Soc.}, 31(2):497--608, 2018.

\bibitem{HoHai}
Phung~Ho Hai.
\newblock Poincar\'e{} series of quantum spaces associated to {H}ecke
  operators.
\newblock {\em Acta Math. Vietnam.}, 24(2):235--246, 1999.

\bibitem{HiraiHirai}
T.~Hirai and E.~Hirai.
\newblock Characters for the infinite {W}eyl groups of type
  {$B_\infty/C_\infty$} and {$D_\infty$}, and for analogous groups.
\newblock In {\em Non-commutativity, infinite-dimensionality and probability at
  the crossroads}, volume~16 of {\em QP--PQ: Quantum Probab. White Noise
  Anal.}, pages 296--317. World Sci. Publ., 2002.

\bibitem{Humphreys:LAG}
James~E. Humphreys.
\newblock {\em Linear algebraic groups}, volume No. 21 of {\em Graduate Texts
  in Mathematics}.
\newblock Springer-Verlag, New York-Heidelberg, 1975.

\bibitem{Johnston}
S.~Johnston.
\newblock Quantum periods, toric degenerations and intrinsic mirror symmetry.
\newblock arXiv:2501.01408, 2025.

\bibitem{Judd:Flag}
J.~Judd.
\newblock Tropical critical points of the superpotential of a flag variety.
\newblock {\em J. Algebra}, 497:102--142, 2018.

\bibitem{JuddRietsch:24}
J.~Judd and K.~Rietsch.
\newblock A positive/tropical critical point theorem and mirror symmetry.
\newblock {\em Adv. Math.}, 456:63, 2024.
\newblock Id/No 109911.

\bibitem{Karlin}
S.~Karlin.
\newblock Total positivity. {Volume} {I}.
\newblock {Stanford} {University} {Press}. {XIV}, 576 p., 1968.

\bibitem{Kerov-thesis}
S.~V. Kerov.
\newblock {\em Asymptotic representation theory of the symmetric group and its
  applications in analysis}, volume 219 of {\em Transl. Math. Monogr.}
\newblock Providence, RI: American Mathematical Society (AMS), 2003.

\bibitem{KOO-IMRN}
Sergei Kerov, Andrei Okounkov, and Grigori Olshanski.
\newblock The boundary of the {Y}oung graph with {J}ack edge multiplicities.
\newblock {\em Internat. Math. Res. Notices}, (4):173--199, 1998.

\bibitem{Kos:QCoh}
Bertram Kostant.
\newblock Flag manifold quantum cohomology, the {Toda} lattice, and the
  representation with highest weight {{\(\rho\)}}.
\newblock {\em Sel. Math., New Ser.}, 2(1):43--91, 1996.

\bibitem{LP:I}
Thomas Lam and Pavlo Pylyavskyy.
\newblock Total positivity in loop groups, {I}: {W}hirls and curls.
\newblock {\em Adv. Math.}, 230(3):1222--1271, 2012.

\bibitem{LP:II}
Thomas Lam and Pavlo Pylyavskyy.
\newblock Total positivity for loop groups {II}: {C}hevalley generators.
\newblock {\em Transform. Groups}, 18(1):179--231, 2013.

\bibitem{LamRietsch}
Thomas Lam and Konstanze Rietsch.
\newblock Total positivity, {Schubert} positivity, and geometric {Satake}.
\newblock {\em J. Algebra}, 460:284--319, 2016.

\bibitem{LamShimozono:2010}
Thomas Lam and Mark Shimozono.
\newblock Quantum cohomology of {$G/P$} and homology of affine {G}rassmannian.
\newblock {\em Acta Mathematica}, 204(1):49--90, 2010.

\bibitem{LamTemplier}
Thomas Lam and Nicolas Templier.
\newblock The mirror conjecture for minuscule flag varieties.
\newblock {\em Duke Math. J.}, 173(1):75--175, 2024.

\bibitem{LasSch:Schubert}
Alain Lascoux and Marcel-Paul Sch\"utzenberger.
\newblock Polyn\^omes de {S}chubert.
\newblock {\em C. R. Acad. Sci. Paris S\'er. I Math.}, 294(13):447--450, 1982.

\bibitem{LRYZ}
Changzheng Li, Konstanze Rietsch, Mingzhi Yang, and Chi Zhang.
\newblock A {P}l\"ucker coordinate mirror for partial flag varieties and
  quantum {S}chubert calculus.
\newblock arXiv:2401.15640[math.ag], 2024.

\bibitem{Ludenbach}
Teresa L{\"u}denbach.
\newblock Ideal polytopes for representations of {$GL_n(C)$}.
\newblock {arXiv:2206.10522 [math.RT]}, 2022.

\bibitem{Lusztig:cbJAMS}
G.~Lusztig.
\newblock Canonical bases arising from quantized enveloping algebras.
\newblock {\em J. Amer. Math. Soc.}, 3(2):447--498, 1990.

\bibitem{Lusztig94}
G.~Lusztig.
\newblock Total positivity in reductive groups.
\newblock In {\em Lie theory and geometry}, volume 123 of {\em Progr. Math.},
  pages 531--568. Birkh\"auser Boston, Boston, MA, 1994.

\bibitem{Macdonald:variations}
I.~G. Macdonald.
\newblock Schur functions: theme and variations.
\newblock In {\em S\'{e}minaire {L}otharingien de {C}ombinatoire
  ({S}aint-{N}abor, 1992)}, volume 498 of {\em Publ. Inst. Rech. Math. Av.},
  pages 5--39. Univ. Louis Pasteur, Strasbourg, 1992.

\bibitem{Macdonald:Book1}
I.~G. Macdonald.
\newblock {\em Symmetric functions and {Hall} polynomials. {With} contributions
  by {A}. {V}. {Zelevinsky}}.
\newblock Oxford: Oxford University Press, reprint of the 1998 2nd edition
  edition, 2015.

\bibitem{Markwig:genPuiseux}
T.~Markwig.
\newblock A field of generalised {P}uiseux series for tropical geometry.
\newblock {\em Rend. Semin. Mat. Univ. Politec. Torino}, 68(1):79--92, 2010.

\bibitem{MR}
B.~R. Marsh and K.~Rietsch.
\newblock The {$B$}-model connection and mirror symmetry for {G}rassmannians.
\newblock {\em Adv. Math.}, 366:107027, 131, 2020.

\bibitem{OkInfSymmGrp}
A.~Okounkov.
\newblock On representations of the infinite symmetric group.
\newblock {\em J. Math. Sci., NY}, 96(5):1, 1997.

\bibitem{OkThoma}
A.~Yu. Okounkov.
\newblock Thoma's theorem and representations of an infinite bisymmetric group.
\newblock {\em Funktsional. Anal. i Prilozhen.}, 28(2):31--40, 95, 1994.

\bibitem{Olshanskii90}
G.~I. Olshanski.
\newblock Unitary representations of {$(G,K)$}-pairs that are connected with
  the infinite symmetric group {$S(\infty)$}.
\newblock {\em Algebra i Analiz}, 1(4):178--209, 1989.

\bibitem{peterson}
D.~Peterson.
\newblock {Quantum cohomology of $G/P$}.
\newblock {Lecture Course, MIT, Spring Term}, 1997.

\bibitem{PetrovThoma}
F.~V. Petrov.
\newblock Asymptotics of traces of paths on {Y}oung and {S}chur graphs.
\newblock {\em Zap. Nauchn. Sem. S.-Peterburg. Otdel. Mat. Inst. Steklov.
  (POMI)}, 468:126--137, 2018.
\newblock Introduction by A. M. Vershik.

\bibitem{rietschJAMS}
K.~Rietsch.
\newblock Totally positive {Toeplitz} matrices and quantum cohomology of
  partial flag varieties.
\newblock {\em J. Am. Math. Soc.}, 16(2):363--392, 2003.

\bibitem{rietschNagoya}
K.~Rietsch.
\newblock A mirror construction for the totally nonnegative part of the
  {P}eterson variety.
\newblock {\em Nagoya Math. J.}, 183:105--142, 2006.

\bibitem{rietsch}
K.~Rietsch.
\newblock A mirror symmetric construction of {$qH^\ast_T(G/P)_{(q)}$}.
\newblock {\em Adv. Math.}, 217(6):2401--2442, 2008.

\bibitem{rietschToeplitz}
K.~Rietsch.
\newblock Tropical {T}oeplitz matrices and parametrisations.
\newblock arXiv:2509.06944[math.rt], 2025.

\bibitem{RW}
K.~{Rietsch} and L.~{Williams}.
\newblock {Newton-Okounkov} bodies, cluster duality, and mirror symmetry for
  {Grassmannians}.
\newblock {\em Duke Math. J.}, 168(18):3437--3527, 2019.

\bibitem{Schoenberg:48}
I.~J. Schoenberg.
\newblock Some analytical aspects of the problem of smoothing.
\newblock In {\em Studies and {E}ssays {P}resented to {R}. {C}ourant on his
  60th {B}irthday, {J}anuary 8, 1948}, pages 351--370. 1948.

\bibitem{Springer:book}
T.~A. Springer.
\newblock {\em Linear algebraic groups}.
\newblock Modern Birkh\"{a}user Classics. Birkh\"{a}user Boston, Inc., Boston,
  MA, second edition, 2009.

\bibitem{Thoma}
E.~Thoma.
\newblock Die unzerlegbaren, positive–definiten {K}lassenfunktionen der
  abz\"ahlbar un-endlichen, symmetrischen {G}ruppe.
\newblock {\em Math. Zeitschr.}, 85:40--6, 1964.

\bibitem{VershikSurvey}
A.~M. Vershik.
\newblock Three theorems on the uniqueness of the {P}lancherel measure from
  different viewpoints.
\newblock {\em Tr. Mat. Inst. Steklova}, 305:71--85, 2019.

\bibitem{VershikKerovAsymptotic}
A.~M. Vershik and S.~V. Kerov.
\newblock Asymptotic theory of characters of the symmetric group. (english.
  russian original).
\newblock {\em Funct. Anal. Appl.}, 15:246--255, 1982.

\bibitem{VT:SchurWeyl}
A.~M. Vershik and N.~V. Tsilevich.
\newblock The {S}chur-{W}eyl graph and {T}homa's theorem.
\newblock {\em Funct. Anal. Appl.}, 55(3):198--209, 2021.
\newblock Translation of Funktsional. Anal. i Prilozhen. {\bf 55} (2021), no.
  3, 26--41.

\bibitem{VoiculescuClassical}
D.~Voiculescu.
\newblock Sur les repr\'esentations factorielles finies de {$U$} {$(\infty )$}
  et autres groupes semblables.
\newblock {\em C. R. Acad. Sci. Paris S\'er. A}, 279:945--946, 1974.

\bibitem{Voiculescu}
D.~Voiculescu.
\newblock Repr{\'e}sentations factorielles de type {{\(II_1\)}} de
  {{\(U(\infty)\)}}.
\newblock {\em J. Mat. Pur. App.(9)}, 55:1--20, 1976.

\bibitem{YunZhu:Loop}
Zhiwei Yun and Xinwen Zhu.
\newblock Integral homology of loop groups via {L}anglands dual groups.
\newblock {\em Represent. Theory}, 15:347--369, 2011.

\end{thebibliography}

 \end{document}